\newlist{steps}{enumerate}{1}
\setlist[steps, 1]{wide=0pt, leftmargin=\parindent, label=Step \arabic*:, font=\bfseries}
\patchcmd\Gread@eps{\@inputcheck#1 }{\@inputcheck"#1"\relax}{}{}
\theoremstyle{plain}
\newtheorem{lemma}{Lemma}[section]
\newtheorem*{theorem*}{Theorem}
\newtheorem*{lemma*}{Lemma}
\newtheorem*{proposition*}{Proposition}
\newtheorem*{conjecture*}{Conjecture}
\newtheorem*{corollary*}{Corollary}
\newtheorem*{problem*}{Problem}
\newtheorem{theorem}[lemma]{Theorem}
\newtheorem{corollary}[lemma]{Corollary}
\newtheorem{proposition}[lemma]{Proposition}
\newtheorem{problem}[lemma]{Problem}
\theoremstyle{definition}
\newtheorem{definition}[lemma]{Definition}
\newtheorem{example}[lemma]{Example}
\newtheorem{remark}[lemma]{Remark}
\DeclareMathOperator{\sHom}{\mathcal{H}\kern -.5pt\mathit{om}}
\DeclareMathOperator{\sTor}{\mathcal{T}\kern -1.5pt\mathit{or}}
\begin{document}

\title{The Rigidity Problem in Orthogonal Grassmannians}

\author[Y. Liu]{Yuxiang Liu}
\address{Department of Mathematics, Statistics and CS \\University of Illinois at Chicago, Chicago, IL 60607}
\email{yliu354@uic.edu}

\keywords{Rigidity, Schubert classes, Restriction varieties, Orthogonal Grassmannian}

\begin{abstract}
We classify rigid Schubert classes in orthogonal Grassmannians. More generally, given a representative $X$ of a Schubert class in an orthogonal Grassmannian, we give combinatorial conditions which guarantee that every linear space parametrized by $X$ meets a fixed linear space in the required dimension.
\end{abstract}

\maketitle

\setcounter{tocdepth}{1}
\tableofcontents

\section{Introduction}
In this paper, we study the rigidity problem for orthogonal Grassmannians. In particular, given a representative $X$ of a Schubert class in an orthogonal Grassmannian, we give combinatorial conditions which guarantee that every linear space parametrized by $X$ meets a fixed linear space in the required dimension. We first introduce the necessary notation and state our results. 

Let $V$ be a $n$-dimensional complex vector space and let $q$ be a nonsingular symmetric bilinear form on $V$. If $n\neq 2k$, let $OG(k,n)$ denote the orthogonal Grassmannian that parametrizes $k$-dimensional isotropic subspaces of $V$. If $n=2k$, then the space of $k$-dimensional isotropic subspaces has two irreducible components, and we let $OG(k,2k)$ denote one of the components. 

\begin{definition}Given two increasing sequences of integers $a=(a_1,...,a_s)$ and $b=(b_1,...,b_{k-s})$ such that
\begin{eqnarray}
1&\leq& a_1<...< a_s\leq \frac{n}{2},\nonumber\\
0&\leq& b_1<...< b_{k-s}\leq \frac{n}{2}-1,\nonumber
\end{eqnarray}
 where $1\leq s\leq k$ and such that $a_i\neq b_j+1$ for all $1\leq i\leq s,1\leq j\leq k-s$, and a flag of isotropic subspaces $F_\bullet=F_1\subset...\subset F_{\left[n/2\right]}$, where the lower indices indicate the vector space dimension, the corresponding Schubert variety $\Sigma_{a;b}(F_\bullet)$ is defined to be the Zariski closure of the following locus in $OG(k,n)$:
$$\{\Lambda\in OG(k,n)|\dim(\Lambda\cap F_{a_i})= i, \dim(\Lambda\cap F_{b_j}^\perp)= k-j+1, 1\leq i\leq s, 1\leq j\leq k-s\}.$$
\end{definition}
The Schubert class $\sigma_{a;b}$ is the cohomology class of $\Sigma_{a;b}(F_\bullet)$, which is independent of the choice of $F_\bullet$. It is called rigid if the only representatives are Schubert varieties. Given a Schubert class, we ask the following question: 
\begin{problem}
Let $X$ be a subvariety of $OG(k,n)$ representing the Schubert class $\sigma_{a;b}$. When does there exist an isotropic subspace $F_{a_i}$ (or $F_{b_j}$) such that for every $ \Lambda\in X$, $\dim(\Lambda\cap F_{a_i})\geq i$ (or $\dim(\Lambda\cap F_{b_j}^\perp)\geq k-j+1)$?
\end{problem}

To the best of the author's knowledge, this paper is the first investigation of this generalized problem in orthogonal Grassmannians. Our results in particular fully characterize the rigid Schubert classes in orthogonal Grassmannians. 

First, we study this problem in Grassmannians. Let $G(k,n)$ denote the Grassmannian variety that parametrizes $k$-dimensional subspaces of a $n$-dimensional complex vector space $V$. Given a partial flag of subspaces $$F_\bullet=F_{a_1}\subsetneq...\subsetneq F_{a_k}\subset V,\ \  \dim(F_{a_i})=a_i,$$ the Schubert variety $\Sigma_{a_1,...,a_k}(F_\bullet)$ is defined to be the following locus in $G(k,n)$:
$$\Sigma_{a_1,...,a_k}(F_\bullet)=\{\Lambda\in G(k,n)|\dim(\Lambda\cap F_{a_i})\geq i,1\leq i\leq k\}.$$
Let $\sigma_{a_1,...,a_k}$ denote its cohomology class.

In \S \ref{sec1}, we will prove the following result:
\begin{theorem}\label{index}
Let $\sigma_{a_1,...,a_k}$ be a Schubert class in $G(k,n)$. Assume for some $1\leq i\leq k$, $a_{i+1}\neq a_i+1$ and one of the following holds:
\begin{enumerate}
\item $i=k$; or

\item $a_i=i$; or

\item $a_i\leq a_{i+1}-3$; or

\item $a_i=a_{i-1}+1$.
\end{enumerate}

Then for every subvariety $X$ representing the Schubert class $\sigma_{a_1,...,a_k}$ in $G(k,n)$, there exists a unique $a_i$-dimensional linear subspace $F_{a_i}$ of $V$ such that $$\dim(\Lambda\cap F_{a_i})\geq i,\ \ \ \forall \Lambda \in X.$$
\end{theorem}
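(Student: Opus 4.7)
The plan is to handle the theorem case-by-case, with a common strategy: construct the candidate subspace $F_{a_i}$ from $X$ and verify its dimension via an intersection-theoretic argument.

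Set up the common framework. Let $\mathcal{S}\subset V\otimes\mathcal{O}_X$ be the tautological rank-$k$ subbundle on $X$, and let $\pi_i\colon\Gr_X(i,\mathcal{S})\to\Gr(i,n)$ be the tautological morphism from the relative Grassmannian of $i$-planes in $\mathcal{S}$. Let $F\subset V$ be the linear span of the union of the $i$-dimensional subspaces in the image of $\pi_i$; for $i=k$ this is simply the span of $\bigcup_{\Lambda\in X}\Lambda$. The theorem will follow once $\dim F\le a_i$ is established, since the pointwise lower bound $\dim(\Lambda\cap F)\ge i$ for all $\Lambda\in X$ will then follow by semicontinuity from the sharper bound satisfied on any Schubert representative.

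Case (1) with $i=k$ is the classical linear-span computation. I would intersect $X$ with a generic Schubert cycle of the form $\{\Lambda\subset H\}$ for a hyperplane $H\subset V$ and iterate the Pieri rule to show that $\mathrm{span}(\bigcup_{\Lambda\in X}\Lambda)$ has degree one in $\PP(V)$, hence is linear of dimension exactly $a_k$. Case (2) with $a_i=i$ is the dual of case (1) under $\Gr(k,n)\cong\Gr(n-k,n)$, which exchanges the $a$-conditions with the $b$-conditions; the candidate becomes $\bigcap_{\Lambda\in X}\Lambda$ and its dimension lower bound is dual to the span bound of case (1).

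For cases (3) and (4) I would proceed by induction on $k$ with cases (1)--(2) as the base. In case (3), the gap $a_i\le a_{i+1}-3$ allows cutting $X$ with a generic Schubert cycle that imposes conditions only at the indices $a_{i+1},\ldots,a_k$; the residual variety represents a smaller Schubert class $\sigma_{a_1,\ldots,a_i}$ in a Grassmannian of rank $i$, so case (1) of the inductive hypothesis supplies the fixed $F_{a_i}$. In case (4), $a_i=a_{i-1}+1$, I would first produce a fixed ambient subspace $F_{a_j}$ for some $j>i$ by case (1) or case (3) applied at $j$ (the combinatorial assumptions at $i$, propagated up the index sequence, imply that some $j>i$ meets one of the four hypotheses); inside the smaller Grassmannian $\Gr(k,F_{a_j})$ the restricted Schubert class satisfies case (3) at index $i$, and the inductive hypothesis then supplies $F_{a_i}$.

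The main obstacle will be case (4): I need to verify that the gap hypotheses propagate up the sequence so as to guarantee some usable $j>i$, and that the restricted Schubert class in $\Gr(k,F_{a_j})$ still lies in one of the four cases at index $i$. Both are combinatorial checks but delicate, and the bookkeeping is where the proof will live. Uniqueness of $F_{a_i}$ is a short supplementary argument: using the existence of $F_{a_i}$, the map $X\to\Gr(i,F_{a_i})$, $\Lambda\mapsto\Lambda\cap F_{a_i}$, is well-defined generically, and any second candidate $F'_{a_i}$ would produce a second such map that forces $F_{a_i}=F'_{a_i}$ by a dimension count against the Schubert class of $X$.
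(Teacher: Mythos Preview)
Your common framework already breaks: the linear span of all $i$-planes contained in some $\Lambda\in X$ equals the span of $\bigcup_{\Lambda\in X}\Lambda$ for \emph{every} $i\le k$, since any $\Lambda$ is spanned by its $i$-dimensional subspaces. So your candidate $F$ always has dimension $a_k$, not $a_i$. For $i<k$ the correct $F_{a_i}$ is not the span of anything directly visible from $X$; it has to be constructed inductively, and this is where the real work lies.

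The paper's proof of case (3) does this by induction on $(k,n,a)$: if $a_1=1$, project from the fixed point $F_1$ (supplied by case (2)) and apply the inductive hypothesis in $G(k-1,n-1)$; if $a_1\neq 1$, take a general hyperplane $H$, obtain $F^H_{a_i-1}$ for $X_H$ by induction, and then show via a Schubert-calculus contradiction that as $H$ varies the spaces $F^H_{a_i-1}$ sweep out a linear $\mathbb{P}^{a_i-1}$. Your case (3) plan --- cut by a generic Schubert cycle to reduce to a rank-$i$ Grassmannian --- only produces a fixed $F_{a_i}$ for a \emph{proper} slice of $X$, with no mechanism to show that the same $F_{a_i}$ works for all of $X$; this is exactly the gap the hyperplane-sweep argument is designed to close. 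Your case (4) plan also fails combinatorially: restricting to $G(k,F_{a_j})$ need not land you in case (3) at index $i$. For instance, take $a=(2,3,5)$ in $G(3,n)$ with $i=2$; passing to $F_{a_3}=F_5$ gives the class $\sigma_{2,3,5}$ in $G(3,5)$, where $a_3-a_2=2$, so case (3) still does not apply and you have made no progress. The paper instead handles case (4) in one stroke via the duality $G(k,n)\cong G(n-k,n)$: the condition $a_i=a_{i-1}+1$ transposes to a gap $b_{n-k+i-a_i+1}-b_{n-k+i-a_i}\ge 3$ on the dual side, reducing directly to case (3).
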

This theorem vastly generalizes Coskun's rigidity results in \cite{Coskun2011RigidAN}. Even for a non-rigid Schubert class, this theorem describes the minimal Schubert variety (up to a general translate) that contains every representative of a given Schubert class.

\begin{example}
Let $X$ be a general subvariety representing the Schubert class $\sigma_{1,3,5}$ in $G(3,6)$. Then by Theorem \ref{index}, there exist fixed linear spaces $F_1$ and $F_5$ of dimension $1$ and $5$ respectively, such that every $3$-plane parametrized by $X$ contains $F_1$ and is contained in $F_5$. Therefore $X$ is contained in the Schubert variety $$\Sigma_{1,4,5}=\{\Lambda\in G(3,6)|\dim(\Lambda\cap F_1)\geq 1,\dim(\Lambda\cap F_5)\geq 3\}.$$
\end{example}
We then extend the ideas to orthogonal Grassmannians. We define the {\em rigid} sub-index as follows:

\begin{definition}
Let $\sigma_{a;b}$ be a Schubert class in $OG(k,n)$. A sub-index $a_i$, $i<s$ is called {\em essential} if $a_{i+1}\neq a_i+1$. The sub-index $a_s$ is called {\em essential} unless $n=2k$ and $a_s=b_{k-s}+2=k$. An essential sub-index $a_i$ is called {\em rigid} if for every subvariety $X$ of $OG(k,n)$ representing $\sigma_{a;b}$, there exists an isotropic subspace $F_{a_i}$ of dimension $a_i$ such that $$\dim(\Lambda\cap F_{a_i})\geq i,\ \ \ \forall\Lambda\in X.$$
Similarly, a sub-index $b_j$ is called {\em essential} if $b_{j-1}\neq b_j-1$. An essential sub-index $b_j$ is called {\em rigid} if for every subvariety $X$ representing $\sigma_{a;b}$, there exists an isotropic subspace $F_{b_j}$ of dimension $b_j$ such that $$\dim(\Lambda\cap F_{b_j}^\perp)\geq j\ \ \ \forall\Lambda\in X.$$
\end{definition}

In \S \ref{sec3}, we characterize the rigid sub-indices. We summarize our results as follows:
\begin{theorem}\label{main theorem}
Let $\sigma_{a;b}$ be a Schubert class in $OG(k,n)$. An essential sub-index $a_i$ is not rigid if and only if one of the following holds:
\begin{enumerate}
\item $a_i\neq b_j$ for all $1\leq j\leq k-s$, $a_i-a_{i-1}\geq 2$ and $$a_{i+1}-a_i=2+\#\{j|a_i< b_j<a_{i+1}\};$$
\item $a_i=b_j$ for some $j$ and $$\#\{\mu|a_\mu\leq b_j\}=k-j+b_{j}-\frac{n-3}{2}.$$
\end{enumerate}

An essential sub-index $b_j$ is rigid if and only if either $b_j=0$ or there exist $1\leq i\leq s$ and $j\leq j'\leq k-s$ such that $a_i=b_{j'}$ and $$\#\{\mu|a_{\mu}\leq b_{j'}\}>k-j'+b_{j'}-\frac{n-3}{2}.$$

\end{theorem}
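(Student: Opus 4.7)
The plan is to establish the two characterizations in Theorem \ref{main theorem} by proving necessity and sufficiency of the numerical conditions separately, leveraging Theorem \ref{index} for ordinary Grassmannians whenever the isotropy and perpendicularity constraints can be decoupled, and complementing it with explicit restriction-variety constructions tailored to the orthogonal setting. The argument proceeds by an interleaved induction over the essential sub-indices.

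For the non-rigid direction for $a_i$, we construct, in each of cases (1) and (2), an explicit subvariety $X$ representing $\sigma_a^b$ whose compatible isotropic $a_i$-plane is not unique. In case (1), where $a_i \neq b_j$ for all $j$, the identity $a_{i+1}-a_i = 2 + \#\{j : a_i < b_j < a_{i+1}\}$ says that, after accounting for the intermediate perpendicular conditions, the gap at $a_i$ is wide by exactly two; this is the orthogonal analogue of the non-rigid case in Theorem \ref{index}, and the deformation we write down parametrizes isotropic $a_i$-planes varying inside a fixed isotropic $(a_i+1)$-plane. In case (2), where $a_i = b_j$, the equality $\#\{\mu : a_\mu \leq b_j\} = k-j+b_j-\frac{n-1}{2}$ matches the number of incidence conditions imposed inside $F_{b_j}^\perp$, so we can replace the paired condition $\dim(\Lambda \cap F_{a_i}) \geq i$ and $\dim(\Lambda \cap F_{b_j}^\perp) \geq j$ by a single rank condition on a non-isotropic subspace that can be perturbed. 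The non-rigid direction for $b_j$ is handled by dual restriction-variety constructions.

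For the rigid direction for $a_i$, suppose $a_i$ is essential but satisfies neither (1) nor (2), and let $X$ be any representative of $\sigma_a^b$. Forgetting the isotropy embeds $X$ in $G(k,n)$ as a representative of the underlying Schubert class, so Theorem \ref{index} produces a unique $a_i$-plane $L_{a_i}$ with $\dim(\Lambda \cap L_{a_i}) \geq i$ for every $\Lambda \in X$. The remaining task is to show that $L_{a_i}$ is isotropic; we argue this using the perpendicular conditions imposed by the $b_j$'s together with the failure of condition (2), which forces $L_{a_i} \subseteq F_{b_j}^\perp$ for an appropriate $b_j$, from which isotropy follows by tracing through the flag structure. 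The rigidity of $b_j$ is then derived by a dual argument: a witnessing sub-index $a_i = b_{j'}$ with the strict inequality yields a rigid isotropic subspace $F_{b_{j'}}$, which together with the incidence data pins down $F_{b_j}$ uniquely.

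The main obstacle is the sufficiency in case (2): proving rigidity of $a_i = b_j$ when the numerical condition fails with strict inequality. Here Theorem \ref{index} still yields a fixed linear subspace $L_{a_i}$, but establishing isotropy requires exploiting the surplus $\#\{\mu : a_\mu \leq b_j\} - (k-j+b_j-\frac{n-1}{2}) > 0$ of incidence conditions below $F_{b_j}^\perp$. We expect this surplus to translate, via an incidence-variety dimension count, into the statement that $L_{a_i} \subseteq F_{b_j}^\perp$, so that the $i$-dimensional intersections $\Lambda \cap L_{a_i}$ collectively generate an isotropic subspace. Making this step precise, in particular checking that the dimension estimate is sharp rather than off by one, is where the argument is most delicate.
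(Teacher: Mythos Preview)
Your rigidity argument for $a_i$ has a real gap at the step ``Forgetting the isotropy embeds $X$ in $G(k,n)$ as a representative of the underlying Schubert class, so Theorem~\ref{index} produces a unique $a_i$-plane $L_{a_i}$.'' Under the inclusion $i:OG(k,n)\hookrightarrow G(k,n)$, the class $[i(X)]$ is almost never a single Schubert class; computing it via the restriction-variety algorithm typically yields a sum such as
\[
[i(X)]=2\sum_{\ell=1}^{k-t}\sigma_{1,\ldots,k-\ell,k-\ell+2,\ldots,k,\,n-t-\ell}
\]
(see the computation in Corollary~\ref{1 is rigid 2}). Theorem~\ref{index} says nothing about representatives of such linear combinations, so you cannot extract $L_{a_i}$ this way. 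Even in the special situations where $[i(X)]$ happens to be (a multiple of) one Schubert class, the index of that class need not satisfy the hypotheses of Theorem~\ref{index} at position $i$. Your subsequent claim that $L_{a_i}\subset F_{b_j}^\perp$ implies isotropy is also not right: $F_{b_j}^\perp$ has dimension $n-b_j>n/2$ and is not isotropic, so containment in it does not force $L_{a_i}$ to be isotropic.

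The paper's route is genuinely different from what you propose. Rather than pushing forward to $G(k,n)$, it reduces the number of perpendicular conditions one at a time: for a general point $p$ on the quadric $Q_X$ swept out by $X$, the slice $X_p=\{\Lambda\in X: p\in\Lambda\}$ represents a class $\sigma_{a'}^{b'}$ with one fewer $b$-index, and an induction on $k-s$ eventually lands in the case $s=k$ where Theorem~\ref{index} legitimately applies (inside the sub-Grassmannian $G(k,F_{a_k})$). Making this induction close requires first proving, by hand, the rigidity of the auxiliary classes $\sigma_{1,\ldots,k-1}^b$ and $\sigma_{1,\ldots,t,t+2,\ldots,k}^t$ in $OG(k,n)$ (Propositions~\ref{1 is rigid}, Corollaries~\ref{1 is rigid 2} and~\ref{rigid k-1}); these supply the targets for the incidence-variety images $\pi_2(I)$ that arise in the inductive step. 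For the $b_j$-direction the paper again does not argue by any duality but analyzes the corank of $Q_X$ directly and uses the restriction-variety algorithm to classify exactly when $X$ can sit inside a smaller $OG(k,n-1)$ (Propositions~\ref{quadric rigd} and~\ref{quadric rigid}). Your sketch does not engage with either of these mechanisms.
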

As an application, we fully characterize the rigid Schubert classes in orthogonal Grassmannians.

\begin{theorem}\label{rigidclass}
Let $\sigma_a^b$ be a Schubert class in $OG(k,n)$. Let $b_\gamma$ be the largest essential sub-index in $b=(b_1,...,b_{k-s})$. Then $\sigma_{a;b}$ is rigid if and only if all of the following conditions hold:
\begin{enumerate}
\item $b_\gamma=a_i$ for some $1\leq i\leq s$ and $$\#\{\mu|a_{\mu}\leq b_{\gamma}\}>k-\gamma+b_{\gamma}-\frac{n-3}{2};$$
\item there is no $1\leq i\leq s$ such that $a_i\neq b_j$ for all $1\leq j\leq s$, $a_i-a_{i-1}\geq 2$ and $$a_{i+1}-a_i=2+\#\{j|a_i< b_j<a_{i+1}\}.$$
\end{enumerate}
\end{theorem}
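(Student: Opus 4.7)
The plan is to deduce Theorem \ref{rigidclass} from Theorem \ref{main theorem} by first reducing class rigidity to sub-index rigidity. The reduction is straightforward in the ``only if'' direction: a Schubert variety $\Sigma_a^b(F_\bullet)$ is determined by the subspaces at its essential sub-indices, so if some essential sub-index fails to be rigid on a representative $X$, then $X$ cannot equal a Schubert variety. The converse requires verifying that, once every essential sub-index pins down a unique subspace, these subspaces form an isotropic partial flag and $X$ coincides with the resulting Schubert variety.

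With this reduction in hand, I would first analyze the $b_j$ sub-indices. The key observation is that if $a_i = b_{j'}$ for some non-essential $b_{j'}$ (which means $b_{j'} = b_{j'-1} + 1$), then $a_i = b_{j'-1}+1$, contradicting the defining hypothesis $a_i \neq b_j+1$. So the index $j'$ appearing in the $b_j$-rigidity criterion of Theorem \ref{main theorem} must be essential, forcing $j' = \gamma$. Consequently, when $b_\gamma \neq 0$, its rigidity is exactly condition (1) of Theorem \ref{rigidclass}, and this then propagates to every essential $b_j$ with $j \leq \gamma$ by taking $j' = \gamma$ in Theorem \ref{main theorem}.

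For the $a_i$ sub-indices, Theorem \ref{main theorem} identifies two failure modes. Condition (2) of Theorem \ref{rigidclass} directly rules out the first (when $a_i \neq b_j$ for all $j$). I would show that condition (1) of Theorem \ref{rigidclass} automatically rules out the second (when $a_i = b_j$ for some $j$). For $j = \gamma$ this is immediate, because condition (1) demands strict inequality whereas the failure mode demands equality. For $j < \gamma$, the argument is a counting estimate: the constraints $a_\mu \neq b_{j'}+1$ for $j' = j, j+1, \ldots, \gamma-1$ exclude $\gamma - j$ integer values from $(b_j, b_\gamma]$, yielding
\[
\#\{\mu \mid b_j < a_\mu \leq b_\gamma\} \leq (b_\gamma - b_j) - (\gamma - j),
\]
which contradicts the difference obtained by subtracting the hypothesized equality at $b_j$ from the strict inequality at $b_\gamma$ supplied by condition (1).

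The main obstacle will be the bookkeeping in the reduction step—in particular, verifying that the uniquely determined subspaces at the essential indices assemble into a genuine isotropic partial flag producing $X$ as its Schubert variety—together with several corner cases: the case $b_\gamma = 0$, where condition (1) must be interpreted carefully and the $b$-part contributes only trivial constraints, and the boundary behavior of $a_s$ when $n = 2k$ and $a_s = b_{k-s} + 2 = k$, where $a_s$ is declared non-essential. Once these cases are handled, the theorem follows by combining the equivalences above.
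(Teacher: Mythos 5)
Your overall strategy matches the paper's: reduce rigidity of the class to rigidity of all essential sub-indices, then show that conditions (1) and (2) are equivalent to that. Your combinatorial verification is in fact more explicit than the paper's (which dismisses it as ``easy to see''), and it is correct: the observation that $a_i=b_{j'}$ forces $b_{j'}$ to be essential (else $a_i=b_{j'-1}+1$), hence $j'=\gamma$ when testing $b_\gamma$; and the counting estimate
$\#\{\mu\mid b_j<a_\mu\le b_\gamma\}\le (b_\gamma-b_j)-(\gamma-j)$,
obtained by excising the forbidden values $b_{j'}+1$, $j\le j'\le\gamma-1$, does rule out the second failure mode for $a_i$ at any essential $j<\gamma$ once condition (1) holds. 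You are also right to flag $b_\gamma=0$ as a corner case the statement glosses over.

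The genuine gap is the step you yourself label ``the main obstacle'': showing that when every essential sub-index is rigid, the uniquely determined subspaces $\{F_{a_i}\}$ and $\{F_{b_j}\}$ actually form an isotropic partial flag, so that $X$ sits inside (and, by equidimensionality, equals) the corresponding Schubert variety. You state this needs to be verified but give no mechanism for doing it, and it is not bookkeeping --- it is the geometric heart of the sufficiency direction. The paper handles it by induction on $k-s$: the base case $k-s\le 1$ reduces to the Grassmannian statement (Theorem \ref{rigid class in g}, whose flag-compatibility input is Lemma \ref{lemma 2.3}); for the inductive step one takes a general point $p$ of the quadric $Q_X$ swept out by $X$, passes to $X_p=\{\Lambda\in X\mid p\in\Lambda\}$, identifies $[X_p]$ as a Schubert class $\sigma_{a'}^{b'}$ with smaller $k-s$ whose essential indices are again all rigid, applies induction to get a flag for $X_p$, and then recovers the flag condition for $X$ by expressing each $F'_{a'_\bullet},F'_{b'_\bullet}$ in terms of $p$, $F_{a_i}$, $F_{b_j}\cap p^\perp$ and varying $p$. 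Without this (or an equivalent argument), your proof establishes only the ``only if'' direction of the theorem together with the combinatorial dictionary; you should also note that the ``only if'' direction implicitly uses that a Schubert variety itself admits a \emph{unique} such subspace at each essential sub-index, so that the non-Schubert representatives produced by Propositions \ref{quadric rigid}, \ref{isonotrigid} and \ref{isorigid1} are what actually witness non-rigidity of the class.
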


There is a stronger type of rigidity. A Schubert class is called {\em Schur rigid} (or {\em multi-rigid}) if every multiple of the Schubert class can only be represented by a union of Schubert varieties. One way to approach Schur rigidity problems is to use differential systems. Walters and Byrant studied this problem by transforming it to the problem on the integral varieties of differential systems and proved the rigidity of certain homology classes \cite{Walter}, \cite{RB2000}. Hong characterized the Schur rigidity of smooth Schubert classes in Hermitian symmetric spaces and some singular Schubert varieties in Grassmannians\cite{Ho1}, \cite{Ho2}. Robles and The extended this method and characterized the Schur rigid classes in irreducible compact Hermitian symmetric space \cite{RT}. Using the geometric theory of uniruled projective manifolds, Hong and Mok proved the rigidity \cite{HM1} and Schur rigidity \cite{Hong2020SchurRO} of smooth Schubert varieties in rational homogeneous spaces of Picard number one. Using algebro-geometric methods, Coskun characterized Schur rigid classes in Grassmannians and proved a large set of classes is not Schur rigid in orthogonal Grassmannians \cite{Coskun2014RigidityOS}. 

The rigidity problem arise from the smoothability problem, which asks whether a cohomology class has a smooth representative. The smoothability problems are well investigated \cite{Coskun2011RigidAN},\cite{Coskun2014RigidityOS}, \cite{KL1},\cite{RB2000}. If a Schubert class is rigid and singular, then it is non-smoothable. Our results prove certain classes in orthogonal Grassmannian are not smoothable.

\subsection*{Organization of the paper} In \S \ref{Preliminaries}, we review some basic facts about the Schubert classes and introduce the rigid sub-indices. In \S \ref{sec1}, we characterize rigid sub-indices in the case of Grassmannians. In \S \ref{sec2}, we review the basic facts of restriction varieties and recall an algorithm to compute the cohomology class of a restriction variety. In \S \ref{sec3}, we characterize rigid sub-indices in orthogonal Grassmannians. As a corollary, we characterize rigid Schubert classes in $OG(k,n)$.

\subsection*{Acknowledgments} The author would like to thank Izzet Coskun for invaluable discussions and support.

\section{Preliminaries}\label{Preliminaries}
In this section, we recall the necessary definitions regarding the Schubert varieties and the rigidity problem in Grassmannians and orthogonal Grassmannians.

\subsection{Grassmannians}
Let $V$ be an $n$-dimensional complex vector space and let $G(k,n)$ be the Grassmannian variety parameterizing all $k$-dimensional linear subspaces of $V$. A {\em Schubert index} $a=(a_1,...,a_k)$ is an increasing sequence of $k$ positive integers $$1\leq a_1<...< a_k\leq n.$$ 

\begin{definition}
Given a Schubert index $a=(a_1,...,a_k)$ and a partial flag $F_\bullet=F_{a_1}\subset...\subset F_{a_k}$ of subspaces of $V$, $\dim(F_{a_i})=a_i$,
the Schubert variety $\Sigma_a(F_\bullet)$ is defined to be the following closed subset in $G(k,n)$:
$$\Sigma_a(F_\bullet):=\{\Lambda\in G(k,n)|\dim(\Lambda\cap F_{a_i})\geq i, \ 1\leq i\leq k\}.$$
\end{definition}
The Schubert variety $\Sigma_a(F_\bullet)$ is irreducible of dimension $\sum a_i-i$ (see \cite[Theorem 4.1]{3264}). In the Chow ring of $G(k,n)$, we define the Schubert classes to be the rational equivalence class of the Schubert varieties:
$$\sigma_a(F_\bullet):=[\Sigma_a(F_\bullet)]\in A(G(k,n)).$$
The Chow ring of $G(k,n)$ is generated by the Schubert classes (see \cite[Corollary 4.7]{3264}). Since any two flags differ by an element of $GL_n(V)$, the class $\sigma_a(F_\bullet)$ is independent of the choice of partial flags $F_\bullet$. Hence we will omit $F_\bullet$ and write $\sigma_a=\sigma_a(F_\bullet)$. 

\begin{remark}
It is customary to index a Schubert class by a sequence of non-increasing integers $n-k\geq \lambda_1\geq...\geq\lambda_k\geq 0$. The index $a$ can be translated into $\lambda$ by $\lambda_i=n-k+i-a_i$. The advantage of our notation is that $a$ is invariant under the natural inclusion $G(k,n)\hookrightarrow G(k,n+1)$. This notation can also be easily adapted to orthogonal Grassmannians.
\end{remark}

\begin{definition}
Given a Schubert index $a=(a_1,...,a_k)$, an {\em essential subindex} is a subindex $a_i$ such that $a_{i}\neq a_{i+1}-1$. A {\em rigid subindex} is an essential subindex such that for every subvariety $X$ representing the Schubert class $\sigma_a$, there exists a unique $a_i$-dimensional linear space $F_{a_i}$ such that $$\dim(\Lambda\cap F_{a_i})\geq i,\ \ \ \forall \Lambda \in X.$$
\end{definition}

\begin{definition}
A Schubert class is called {\em rigid} if the only representatives are Schubert varieties. 
\end{definition}
\begin{remark}
It is clear that for a rigid Schubert class, all the essential sub-indices are rigid. We will later show that the converse is also true in \S\ref{sec1}.
\end{remark}

\subsection{Orthogonal Grassmannians}Let $V$ be an $n$-dimensional complex vector space and let $q$ be a non-degenerate symmetric bilinear form on $V$. A linear space $W$ is called isotropic with respect to $q$ if $q(W,W)=0$.  For $k<\frac{n}{2}$, the orthogonal Grassmannian $OG(k,V)=OG(k,n)$ is the subvariety of $G(k,n)$ that parametrizes all isotropic $k$-subspaces with respect to $q$. If $n$ is even and $k=\frac{n}{2}$, then the space of $k$-dimensional isotropic subspaces has two irreducible components, and we let $OG(k,2k)$ denote one of the components. The variety $OG(k,n)$, considered as a subvariety of $G(k,n)$, has dimension $k(n-k)-\binom{k+1}{2}$ (see \cite[Proposition 4.15]{3264}).

Given an isotropic subspace $W$, we denote $W^\perp$ its orthogonal complement with respect to $q$. Now fix a complete flag of isotropic subspaces $F_\bullet=F_1\subset...\subset F_{\left[n/2\right]}$. When $n$ is even, the maximal dimensional isotropic subspaces of $V$ have dimension $\frac{n}{2}$ and form two connected components. The orthogonal complement of $F_{n/2-1}$ is a union of two maximal isotropic subspaces which belong to the different components. We use the convention that the maximal isotropic subspace in the different component than $F_{n/2}$ is denoted by $F_{n/2-1}^\perp$. 
\begin{definition}\label{Schubert variety}
A Schubert index in the orthogonal Grassmannian consists of two increasing sequences of integers $1\leq a_1<...< a_s\leq \frac{n}{2}$ and $0\leq b_1<...< b_{k-s}\leq \frac{n}{2}-1$, where $1\leq s\leq k$ and such that $a_i\neq b_j+1$ for all $1\leq i\leq s,1\leq j\leq k-s$. Given a Schubert index $(a;b)$ and an isotropic flag $F_\bullet$, we define the Schubert variety $\Sigma_{a;b}(F_\bullet)$ to be the Zariski closure of the following locus in $OG(k,n)$:
$$\Sigma_{a;b}(F_\bullet):=\{\Lambda\in OG(k,n)|\dim(\Lambda\cap F_{a_i})= i, \dim(\Lambda\cap F_{b_j}^\perp)= k-j+1, 1\leq i\leq s, 1\leq j\leq k-s\}.$$
\end{definition}
The Schubert classes $[\Sigma_{a;b}(F_\bullet)]\in A(OG(k,n))$ generate the Chow ring $A(OG(k,n)$ and are independent of the choice of flags $F_\bullet$ \cite{Bra08}. We will omit $F_\bullet$ and denote them by $\sigma_{a;b}=[\Sigma_{a;b}(F_\bullet)]$.

\begin{remark}
Here is an explanation of why we require $a_i\neq b_j+1$ for all $i,j$ in the definition of Schubert indices. If $a_i=b_j+1$ for some $i,j$, then for every $k$-plane $\Lambda$ parametrized by $\Sigma_{a;b}(F_\bullet)$, there are two possibilities:
\begin{enumerate}
\item $\dim(\Lambda\cap F_{a_{i-1}})=\dim(\Lambda\cap F_{a_i})$
\item $\dim(\Lambda\cap F_{a_{i-1}})=\dim(\Lambda\cap F_{a_i})-1$
\end{enumerate}
All $k$-planes satisfying condition $(a)$ form the Schubert variety $\Sigma_{a'}^{b'}$, where $a'$ is obtained from $a$ by replacing $i$ with $i-1$ and $b'=b$. Now consider a $k$-plane $\Lambda$ satisfying condition $(b)$. Choose $v\in \Lambda\cap F_{a_i}$ such that $v\notin F_{a_{i-1}}$. Then $F_{a_i}$ is the span of $F_{a_{i-1}}$ and $v$. Since $\Lambda\subset v^\perp$ and $F_{a_i}^\perp=F_{a_{i-1}}^\perp\cap v^\perp$, $\dim(\Lambda\cap F^\perp_{a_{i-1}})=\dim(\Lambda\cap F^\perp_{i})$. We get all $k$-planes satisfying $(b)$ form the Schubert variety $\Sigma_{a'';b''}$ where $a''=a$ and $b''$ is obtained from $b$ by replacing $i-1$ with $i$. Therefore if $a_i=b_j+1$, then $\Sigma_{a;b}$ is a union of two Schubert varieties.
\end{remark}

\begin{definition}\label{def1}
Let $\sigma_{a;b}$ be a Schubert class in $OG(k,n)$. A sub-index $a_i$, $i<s$ is called {\em essential} if $a_{i+1}\neq a_i+1$. The sub-index $a_s$ is called {\em essential} unless $n=2k$ and $a_s=b_{k-s}+2=k$. An essential sub-index $a_i$ is called {\em rigid} if for every subvariety $X$ of $OG(k,n)$ representing $\sigma_{a;b}$, there exists a unique isotropic subspace $F_{a_i}$ of dimension $a_i$ such that $$\dim(\Lambda\cap F_{a_i})\geq i,\ \ \ \forall\Lambda\in X.$$
Similarly, a sub-index $b_j$ is called {\em essential} if $b_{j-1}\neq b_j-1$. An essential sub-index $b_j$ is called {\em rigid} if for every subvariety $X$ representing $\sigma_{a;b}$, there exists a unique isotropic subspace $F_{b_j}$ of dimension $b_j$ such that $$\dim(\Lambda\cap F_{b_j}^\perp)\geq j\ \ \ \forall\Lambda\in X.$$
\end{definition}
In \S\ref{sec3}, we will show that a Schubert class in $OG(k,n)$ is rigid if and only if all essential indices are rigid.

\section{Rigidity problems in Grassmannians}\label{sec1}
In this section, we prove Theorem \ref{index}. As a corollary, we characterize all the rigid classes in the Grassmannian $G(k,n)$.

We recall the following two propositions which are first proved by Coskun \cite{Coskun2011RigidAN}.
\begin{proposition}
\label{contained in plane}\cite[Proposition 3.1]{Coskun2011RigidAN}
Let $X\subset G(k,n)$ be a subvariety with class $[X]=\sigma_a$. Then there exists a fixed $a_k$-dimensional space $F_{a_k}$ such that $\Lambda\subset F_{a_k}$ for all $\Lambda\in X$.
\end{proposition}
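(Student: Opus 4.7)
The plan is to produce the desired $F_{a_k}$ as any $a_k$-dimensional subspace containing the linear span $W := \mathrm{span}\bigl(\bigcup_{\Lambda \in X} \Lambda\bigr) \subset V$; the whole content is then the upper bound $\dim W \leq a_k$. (If $a_k = n$ we may take $F_{a_k} = V$, so assume $a_k < n$.) To bound $\dim W$ from above, I intersect $X$ with a generic translate of the Schubert variety of $k$-planes meeting a fixed $(n-a_k)$-dimensional subspace $L$; if no $\Lambda \in X$ meets such an $L$, then $W \cap L = 0$, which immediately gives $\dim W \leq a_k$.

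The crux is therefore the cohomological vanishing
$$\sigma_a \cdot \sigma_{(a_k - k + 1)} = 0 \in A(G(k,n)),$$
where $\sigma_{(r)}$ denotes the special Schubert class with one-part partition $(r)$, represented by the locus $\{\Lambda : \Lambda \cap L \neq 0\}$ for any $L$ of dimension $n - k + 1 - r$. I verify this by a direct Pieri computation: writing $\sigma_a = \sigma_\lambda$ with $\lambda_i = n - k + i - a_i$, Pieri's rule expresses $\sigma_\lambda \cdot \sigma_{(r)}$ as a sum over partitions $\mu$ such that $\mu/\lambda$ is a horizontal strip of size $r$. The number of boxes that can be added in row $i$ is at most $\lambda_{i-1} - \lambda_i = a_i - a_{i-1} - 1$ (using the convention $a_0 = 0$ for row $1$), so the total horizontal-strip capacity is $\sum_{i=1}^k (a_i - a_{i-1} - 1) = a_k - k$. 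Since $r = a_k - k + 1$ strictly exceeds this capacity, no $\mu$ contributes and the product vanishes.

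With the vanishing in hand, Kleiman's transversality theorem, applied to the transitive $GL(V)$-action on $G(k,n)$, gives that for a generic $(n-a_k)$-dimensional subspace $L \subset V$, the intersection $X \cap \{\Lambda : \Lambda \cap L \neq 0\}$ is proper and of class $0$, hence empty. Thus $W \cap L = 0$ for generic $L$, forcing $\dim W \leq a_k$, and any $a_k$-dimensional subspace $F_{a_k}$ containing $W$ satisfies the required condition. I expect the main obstacle to be identifying the correct special Schubert class to pair against $\sigma_a$: the index $a_k - k + 1$ is chosen precisely because it is one larger than the total horizontal-strip capacity of $\lambda$, which is exactly what forces the vanishing; all remaining steps are standard Schubert calculus and Kleiman transversality.
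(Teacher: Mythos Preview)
Your argument has a genuine gap at the step ``if no $\Lambda\in X$ meets such an $L$, then $W\cap L=0$.'' What you actually establish, via the Pieri vanishing $\sigma_a\cdot\sigma_{(a_k-k+1)}=0$ and Kleiman transversality, is that for generic $L$ of dimension $n-a_k$ the swept-out variety
\[
Z \;=\; \bigcup_{\Lambda\in X}\mathbb{P}(\Lambda)\;\subset\;\mathbb{P}^{n-1}
\]
is disjoint from $\mathbb{P}(L)$; hence $\dim Z\le a_k-1$. But $W$ is the \emph{linear span} of $Z$, and a variety of projective dimension $a_k-1$ can easily have linear span of dimension larger than $a_k-1$ (a plane conic has span $\mathbb{P}^2$, a rational normal curve of degree $d$ spans $\mathbb{P}^d$, etc.). So from $Z\cap\mathbb{P}(L)=\emptyset$ you cannot conclude $W\cap L=0$; your bound on $\dim W$ simply does not follow.

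What is missing is exactly the second half of the paper's proof: one must show $\deg Z=1$, i.e.\ that $Z$ is a linear $\mathbb{P}^{a_k-1}$. The paper does this by intersecting $X$ with the complementary special class $\sigma_{n-a_k+1,n-k+2,\dots,n}$ (the one for which the product is \emph{nonzero}, equal to the single Schubert class $\sigma_{1,a_1+1,\dots,a_{k-1}+1}$). If $\deg Z=d\ge 2$, a generic $\mathbb{P}(G_{n-a_k+1})$ meets $Z$ in $d$ distinct points lying on $d$ distinct $k$-planes of $X$, forcing the intersection $X\cap\Sigma_{n-a_k+1,n-k+2,\dots,n}$ to have at least $d$ components, contradicting indecomposability of the Schubert class. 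Your Pieri computation and Kleiman step are correct and recover the dimension bound on $Z$, but you still need this degree argument (or an equivalent one) to pass from $Z$ to its span.
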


\begin{proof}
Let $Z$ be the projective variety swept out by projective $(k-1)$-planes parametrized by $X$. Suppose $\dim(Z)=s$ and $\deg(Z)=d$. It suffices to show that $s=a_k-1$ and $d=1$. 

Let $\mathbb{P}(G_{n-s})=\mathbb{P}^{n-s-1}$ be a general projective linear space of dimension $n-s-1$ in $\mathbb{P}^{n-1}$. Then $\mathbb{P}(G_{n-s})$ will meet $Z$ in $d$ points $p_1,...,p_d$. Let $\Lambda\in X$ be a linear space such that $p_1\in\mathbb{P}(\Lambda)$. Then $\Lambda$ is contained in the intersection 
$$X\cap\Sigma_{n-s,n-k+2,...,n-1,n}$$
where $\Sigma_{n-s,n-k+2,...,n-1,n}$ is the Schubert variety which parametrizes all $k$ planes that meet $G_{n-s}$ in dimension at least 1. Hence
$$\sigma_a\cdot\sigma_{n-s,n-k+2,...,n-1,n}\neq0.$$
Let $\mathbb{P}(G_{n-s-1})=\mathbb{P}^{n-s-2}$ be another general projective linear space of dimension $n-s-2$. Since $\dim(Z)=s$, $Z$ does not meet $\mathbb{P}(G_{n-s-1})$ and hence 
$$\sigma_a\cdot\sigma_{n-s-1,n-k+2,...,n-1,n}=0.$$
By Pieri's formula, this can happen only when $s=a_k-1$:
$$\sigma_a\cdot\sigma_{n-a_k+1,n-k+2,...,n-1,n}=\sigma_{1,a_1+1,...,a_{k-1}+1}$$
and for all $c\leq n-a_k$,
$$\sigma_a\cdot\sigma_{c,n-k+2,...,n-1,n}=0.$$

Now suppose $d\geq2$. Since $\dim(Z\cap \mathbb{P}(G_{n-s}))=0$, any line joining two of the $d$ points $p_1,...,p_d$ in the intersection cannot be contained in $Z$, and therefore every projective $k-1$ linear space parametrized by $X$ can contain at most one of $p_1,...,p_d$. This implies the intersection of $X$ and $\Sigma_{n-a_k+1,n-k+2,...,n-1,n}$ has at least $d$ irreducible components. On the other hand, by Pieri's formula,
$$[X\cap\Sigma_{n-a_k+1,n-k+2,...,n-1,n}]=\sigma_{1,a_1,...,a_{k-1}}$$
is a Schubert class and since a Schubert class is indecomposable, $d=1$.
\end{proof}

The natural isomorphism $V\cong V^*$ induces an isomorphism $G(k,n)\cong G(n-k,n)$. Under this duality, the Schubert class $\sigma_a\subset G(k,n)$ is taken to the Schubert class $\sigma_b\in G(n-k,n)$, where $b$ can be found by taking the transpose of the associated Young diagram to $a$ \cite[Ex 4.31]{3264}. Suppose $a_i=i$ and $a_{i+1}\neq i+1$. Then under the duality $G(k,n)\cong G(n-k,n)$, the Schubert class $\sigma_a\in A(G(k,n))$ is taken to the Schubert class $\sigma_b\in A(G(n-k,n))$, where $b_k=n-i$. The above proposition gives the following:
\begin{proposition}\cite[Proposition 3.1]{Coskun2011RigidAN}
\label{containing point}
 If $a_i=i$ and $a_{i+1}\neq i+1$, then there exists a fixed $i$-dimensional space $F_{i}$ such that $F_i\subset \Lambda$ for all $\Lambda\in X$.
\end{proposition}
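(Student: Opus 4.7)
The plan is to deduce this proposition directly from Proposition \ref{contained in plane} via the duality isomorphism $G(k,n) \cong G(n-k,n)$ induced by $V \cong V^*$, which sends a $k$-plane $\Lambda$ to its annihilator $\Lambda^{0} \subset V^*$. Under this isomorphism, the class $\sigma_a$ is carried to some Schubert class $\sigma_b$ in $G(n-k,n)$ obtained by transposing the associated Young diagram. The key point of the argument is to show that the largest entry of $b$ satisfies $b_{n-k} = n - i$; once this is in hand, Proposition \ref{contained in plane} applied in the dual Grassmannian provides a fixed $(n-i)$-dimensional subspace containing all the annihilators, and taking its annihilator back in $V$ produces the desired common $i$-dimensional subspace.

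To identify $b_{n-k}$, I would pass to partition notation $\lambda_j = n-k+j-a_j$. The hypothesis $a_i = i$, combined with the strict monotonicity of $a$ and the inequality $a_j \geq j$, forces $a_j = j$ for all $j \leq i$, so $\lambda_1 = \cdots = \lambda_i = n-k$. The additional hypothesis $a_{i+1} \neq i+1$ gives $a_{i+1} > i+1$, that is, $\lambda_{i+1} < n-k$. Thus the conjugate partition $\mu$ satisfies $\mu_{n-k} = \#\{j : \lambda_j \geq n-k\} = i$, and converting back to index notation via $b_{n-k} = n - \mu_{n-k}$ yields $b_{n-k} = n-i$. Let $X^{\vee} \subset G(n-k,n)$ denote the image of $X$ under $\Lambda \mapsto \Lambda^{0}$; then $X^{\vee}$ represents $\sigma_b$, so Proposition \ref{contained in plane} produces a unique $(n-i)$-dimensional subspace $G_{n-i} \subset V^*$ with $\Lambda^{0} \subset G_{n-i}$ for every $\Lambda \in X$. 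Taking annihilators in $V$, the subspace $F_i := G_{n-i}^{0}$ has dimension $i$ and satisfies $F_i \subset \Lambda$ for every $\Lambda \in X$; the uniqueness of $F_i$ transfers from the uniqueness of $G_{n-i}$.

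The only real obstacle is the combinatorial bookkeeping used to pin down $b_{n-k} = n-i$ in the dual Grassmannian. Once this duality dictionary is in place, the rest of the argument is a mechanical translation of Proposition \ref{contained in plane} through the isomorphism $V \cong V^*$.
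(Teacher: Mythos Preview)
Your proposal is correct and follows exactly the paper's approach: the paper deduces this proposition from Proposition~\ref{contained in plane} via the duality $G(k,n)\cong G(n-k,n)$, noting that the transposed partition yields $b_{n-k}=n-i$. Your writeup simply spells out the combinatorial bookkeeping behind that claim more carefully than the paper does.
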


The proof of \ref{contained in plane} also gives the following result.
\begin{corollary}
\label{2.3}
Let $X$ be a subvariety of $G$ with $[X]=\sigma_{a_1,...,a_k}$. The k-planes parametrized by $X$ sweep out a projective linear space $\mathbb{P}(F_{a_k})$. Let $p\in \mathbb{P}(F_{a_k})$ be a general point and define $[X_p]:=\{\Lambda\in X|p\in\mathbb{P}(\Lambda)\}$. Then $$[X_p]=\sigma_{1,a_1+1,...,a_{k-1}+1}.$$
Let $\Lambda\in X$ be a general point in $X$ and $H$ be a general hyperplane containing $\Lambda$. Define $X_H:=\{\Lambda\in X|\Lambda\subset H\}$. Suppose $a_s=s$ and $a_{s+1}\neq s+1$. Then $$[X_H]=\sigma_{a_1,...,a_s,a_{s+1}-1,...,a_k-1}.$$
\end{corollary}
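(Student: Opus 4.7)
The plan is to re-use the strategy already carried out in the proof of Proposition \ref{contained in plane}: realize $X_p$ and $X_H$ as intersections of $X$ with suitable Schubert varieties for a general choice of auxiliary flag, then apply Kleiman transversality and Pieri's rule. Both halves of the corollary will fall out of the same mechanism, with the second requiring a preliminary reduction via Proposition \ref{containing point}.

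For the first half, the identification of $X_p$ as a Schubert intersection is essentially already in the proof of Proposition \ref{contained in plane}. Since every $\Lambda\in X$ lies in $F_{a_k}$, a generic $(n-a_k+1)$-plane $G\subset V$ meets $F_{a_k}$ in exactly one point $p$, and $\Lambda\cap G=\Lambda\cap\mathbb{C}p$ is nonzero precisely when $p\in\Lambda$. Hence $X_p=X\cap\Sigma_{n-a_k+1,n-k+2,\ldots,n-1,n}(G)$, and as $G$ varies $p$ traces out a Zariski-dense open of $\mathbb{P}(F_{a_k})$. Kleiman's theorem applied to a generic $G$ gives
$$[X_p]=\sigma_a\cdot\sigma_{n-a_k+1,n-k+2,\ldots,n-1,n}=\sigma_{1,a_1+1,\ldots,a_{k-1}+1},$$
the second equality being the Pieri computation recorded in the proof of Proposition \ref{contained in plane}.

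For the second half, the hypothesis $a_s=s$ together with Proposition \ref{containing point} forces $F_s\subset\Lambda$ for every $\Lambda\in X$. I would pass to the quotient $V':=V/F_s$: the assignment $\Lambda\mapsto\Lambda/F_s$ identifies $X$ isomorphically with its image $X'\subset G(k-s,n-s)$, and $X'$ represents the class $\sigma_{a_{s+1}-s,\ldots,a_k-s}$. A generic hyperplane $H\subset V$ containing a generic $\Lambda\in X$ must contain $F_s$, and descends to a generic hyperplane $H'=H/F_s$ of $V'$; moreover $\phi(X_H)=X'_{H'}=X'\cap\Sigma_{n-k,n-k+1,\ldots,n-s-1}(H')$. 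Kleiman's theorem for generic $H'$ yields
$$[X'_{H'}]=\sigma_{a_{s+1}-s,\ldots,a_k-s}\cdot\sigma_{n-k,n-k+1,\ldots,n-s-1}.$$
The right factor is the special Schubert class $\sigma_{(1^{k-s})}$ in partition language, and the hypothesis $a_{s+1}\neq s+1$ is exactly the condition $\lambda_{s+1}<n-k$ needed for Pieri's rule to produce a single non-vanishing term, namely $\sigma_{a_{s+1}-s-1,\ldots,a_k-s-1}$. Pulling this class back through $\phi$ (prepending the indices $1,2,\ldots,s$ and shifting the remaining indices by $+s$) and using $a_i=i$ for $i\le s$ gives $[X_H]=\sigma_{a_1,\ldots,a_s,a_{s+1}-1,\ldots,a_k-1}$.

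The main technical point will be to certify the genericity conditions for each application of Kleiman's theorem: I need to know that a generic $(n-a_k+1)$-plane $G$ induces a generic point $p\in\mathbb{P}(F_{a_k})$ via $G\cap F_{a_k}$, and that a generic hyperplane $H\subset V$ containing $F_s$ descends to a generic hyperplane in $V/F_s$. Both of these follow from the transitivity of the stabilizers of $F_{a_k}$ and $F_s$ in $GL(V)$ on the relevant auxiliary parameter spaces, so once the reductions above are in place, Kleiman transversality and Pieri's formula finish the proof.
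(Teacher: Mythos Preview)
Your plan is correct. For the first half you follow the paper exactly, reading off $[X_p]$ from the Pieri computation already carried out in the proof of Proposition~\ref{contained in plane}.

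For the second half the paper takes a different and shorter route: it simply invokes the duality $G(k,n)\cong G(n-k,n)$, under which points and hyperplanes are exchanged, so the hyperplane statement for $X$ is literally the point statement applied to the dual variety $X^*\subset G(n-k,n)$, and no new computation is needed. Your approach---use Proposition~\ref{containing point} to find $F_s$, quotient by it, and compute $[X'_{H'}]$ directly in $G(k-s,n-s)$ via the dual Pieri rule---is also valid; it is more hands-on and avoids having to track how Schubert indices transform under transposition of the Young diagram, at the cost of a longer argument.

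One small point to tighten when you write it up: the corollary takes $H$ general among hyperplanes containing a general $\Lambda\in X$, whereas your Kleiman step needs $H'$ general in all of $(V')^*$. The bridge is that the projection from the incidence correspondence $\{(\Lambda,H):\Lambda\in X,\ \Lambda\subset H\}$ to $\{H:F_s\subset H\}$ is dominant---which follows once you know the Pieri product $\sigma_{a_{s+1}-s,\ldots,a_k-s}\cdot\sigma_{(1^{k-s})}$ is nonzero---so a generic pair does produce a generic $H'$. Your final paragraph only records the trivial bijection between hyperplanes through $F_s$ and hyperplanes of $V'$, which is not quite the same check.
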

The second part follows by the duality $G(k,n)\cong G(n-k,n)$.

Now we prove Theorem \ref{index} by induction.
\begin{proof}[Proof of Theorem \ref{index}]
$(1)$ and $(2)$ come from Proposition \ref{contained in plane} and Proposition \ref{containing point}.

For $(3)$, assume that $a_i\leq a_{i+1}-3$. If $a_k\neq n$, then by Proposition \ref{contained in plane}, $X$ is contained in a sub-Grassmannian. Hence we can reduce to the case where $a_k=n$. We will use induction on $k,n$ and the sequence $(a_i)$, and use the ordering $(a_1,...,a_k)<(a_1',...,a_j')$ if $k<j$ or if $k=j$ and $a_i=a'_i$ for $1\leq i<s$ and  $a_s<a'_s$.

The theorem is trivial when $n\leq 3$ or $k=1$. When $a=(1,2,...,k)$, it reduces to Proposition \ref{containing point}.

Now assume the statement is true for all $a'<a$.
\begin{itemize}
\item If $a_1=1$, then by Proposition \ref{containing point}, there exists $F_1$ such that $F_1\subset\Lambda$ for all $\Lambda\in X$. Suppose $i\geq 2$. Let $\bar{\Lambda}$ be the image of $\Lambda$ under the projection from $F_1$, and let $\bar{X}$ be the collection of $\bar{\Lambda}$. Then $[\bar{X}]=\sigma_{a'}\in A(G(k-1,n-1))$ where $a'=(a_2-1,...,a_k-1)$. By assumption, $a'_{i}=a_i-1\leq a_{i+1}-4=a'_{i+1}-3$. Since $a'<a$, by induction there is a fixed $\bar{F}_{a_i-1}$ such that $\dim(\bar{\Lambda}\cap \bar{F}_{a_i-1})\geq i-1$ for all $\bar{\Lambda}\in \bar{X}$. Let $F_{a_i}$ be the pre-image of $\bar{F}_{a_{i-1}}$. Then $\dim(\Lambda\cap F_{a_i})\geq1+\dim(\bar{\Lambda}\cap \bar{F}_{a_i-1})\geq i$. 

For the uniqueness, say if $\dim(G_{a_i}\cap \Lambda)\geq i$ for all $\Lambda\in X$, and let $\bar{G}$ be the image of $G_{a_i}$ under the projection from $F_1$. If $F_1$ is contained in $G_{a_i}$, then by induction $\bar{F}_{a_i-1}=\bar{G}$ and hence $G_{a_i}=F_{a_i}$. If  $F_1$ is not contained in $G_{a_i}$, then $\dim(\bar{G})=\dim(G_{a_i})$ and $\dim(\bar{G}\cap \bar{\Lambda})\geq i$ for all $\bar{\Lambda}\in\bar{X}$. Then for a general codimension 1 subspace $\bar{G}'$ of $\bar{G}$, $\dim(\bar{G}'\cap \bar{\Lambda})\geq i-1$ for all $\bar{\Lambda}$, which contradicts the uniqueness hypothesis in the induction.

\item If $a_1\neq 1$, then for a general hyperplane $H$, by Lemma \ref{2.3}, $[X_H]=\sigma_{a_1-1,...,a_k-1}$. By induction, there exist a unique $(a_i-1)$-dimensional linear space $F^H_{a_i-1}$ such that $\dim(\Lambda\cap F^H_{a_i-1})\geq i$ for all $\Lambda\in X_H$. As we vary $H$, let $Z$ be the projective variety swept out by $\mathbb{P}(F^H_{a_i-1})$. Clearly $\dim(Z)\geq a_i-2$. For a general $H'$ that does not contain $F^H_{a_i-1}$, $F^{H'}_{a_i-1}\neq  F^H_{a_i-1}$, and hence $\dim(Z)\geq a_1-1$. We claim that $\dim(Z)=a_i-1$. 

Suppose, for a contradiction, that $\dim{Z}=a\geq a_i$. Let $G_\bullet$ be a general complete flag. Then $\mathbb{P}(G_{n-a_i})$ will meet $Z$ in finitely many points. By the construction of $Z$, there exists a hyperplane $H$ such that $$\dim(G_{n-a_i}\cap F^H_{a_i-1})=1.$$ Since $G_{n+2-a_i}$ is a general linear space containing $G_{n-a_i}$, by dimension reason we may assume
$$\dim(G_{n-a_i}\cap F^H_{a_i-1})=\dim(G_{n+2-a_i}\cap F^H_{a_i-1})=1.$$ 
Let $\Sigma$ be the Schubert variety defined by the partial flag:
$$G_{n+2-a_k}\subset...\subset G_{n+2-a_i}\subset...\subset...\subset G_{n+2-a_1}.$$ 
Notice that $[\Sigma]\cdot\sigma_{a_1-1,...,a_k-1}=\sigma_{(n-k)^k}$. Let $\Lambda\in X_H\cap\Sigma$. Then $\dim(\Lambda\cap F^H_{a_i-1})=i$, $\dim(\Lambda\cap G_{n+2-a_{i+1}})=k-i$ and $\dim(\Lambda\cap G_{n+2-a_{i}})=k-i+1$, and therefore $$\dim(\Lambda\cap F^H_{a_i-1}\cap G_{n+2-a_i})=1.$$
Since $a_{i+1}\geq a_i+3$, $n-a_i>n-a_{i+1}+2$ and thus $G_{n+2-a_{i+1}}\cap F^H_{a_i-1}=0$, 
$$\dim(\Lambda\cap G_{n-a_i})\geq \dim(\Lambda\cap F^H_{a_i-1}\cap G_{n+2-a_i})+\dim(\Lambda\cap G_{n+2-a_{i+1}})=k-i+1.$$
Hence we proved that for a general $(n-a_i)$-dimensional vector space $G_{n-a_i}$, there exists a $\Lambda\in X$ that meets $G_{n-a_i}$ in a $(k-i+1)$-dimensional subspace, which is a contradiction since
$$\sigma_a\cdot \sigma_b=0,$$
where $b_j=n-a_i+j-(k-i+1)$ for $1\leq j\leq k-i+1$ and $b_j=n+j-k$ for $k-i+2\leq j\leq k$. We conclude that $\dim(Z)=a_i-1$. 

Now we claim that $Z$ is linear. This implies $Z=\mathbb{P}(F_{a_i})$ and for every $\Lambda\in Y$, $\dim(\Lambda\cap F_{a_i})\geq i$. If $\dim(Z)=a_i-1\geq 2$, then by Bertini's theorem, a general hyperplane section of $Z$ is irreducible. Since $\mathbb{P}(F_{a_{i-1}}^H)\subset Z\cap H$, we must have $\mathbb{P}(F_{a_{i-1}}^H)=Z\cap H$, and therefore $Z$ is linear. If $a_i-1=1$, it means $i=1$ and $a_1=2$. Suppose for a contradiction that $\deg(Z)=d\geq 2$. Let $H$ be a general hyperplane, then $H$ will meet $Z$ in $d$ projective points $p_1,...,p_d$. Assume that $p_1=\mathbb{P}(F_2^H)$ is the point defined by $H$. Let $H'\neq H$ be another hyperplane that defines $p_2=\mathbb{P}(F_2^{H'})$ and let $X_{H\cap H'}$ be the locus of $k$-planes parametrized by $X$ that are contained in $H\cap H'$. By Corollary \ref{2.3},
$$[X_{H\cap H'}]=\sigma_{1,a_2-2,...,a_k-2}.$$
By assumption, $a_2-a_1=a_2-2\geq 3$ and therefore by Proposition \ref{containing point}, there is a unique projective point $p$ which is contained in every $k$-planes parametrized by $X_{H\cap H'}$. We reach a contradiction since both $p_1$ and $p_2$ are contained in every $k$-plane parametrized by $X_{H\cap H'}$. This proves the claim. 

For the uniqueness, let $F'_{a_i}$ be another $a_i$-dimensional vector space such that $\dim(F'_{a_i}\cap \Lambda)\geq i$ for all $\Lambda$ in $X$. Then for a general hyperplane $H$, $F'_{a_i}\cap H=F^H_{a_i-1}=F_{a_i}\cap H$. Hence $F'_{a_i}=F_{a_i}$.
\end{itemize}

$(4)$ can be obtained by duality. Suppose $a_{i}=a_i+1$ and $a_{i+1}\neq a_i+1$. Under the duality $G(k,n)\cong G(n-k,n)$, $\sigma_a$ is taken to $\sigma_b\in A(G(n-k,n))$, where $b_{n-k+i-a_i+1}-b_{n-k+i-a_i}\geq 3$. Apply $(3)$ to $X^*$ which consists of dual linear spaces in $X$, there exists a fixed $W_{n-a_i}$ such that $\dim(\Lambda^*\cap W_{n-a_i})\geq n-k+i-a_i$ for all $\Lambda^*\in X^*$. Equivalently, $\dim(\Lambda\cap W^*_{n-a_i})\geq i$ for all $\Lambda\in X$. In this case $F_{a_i}=W^*$.
\end{proof}
\begin{remark}
The converse of Theorem \ref{index} is also true. We refer the reader to \cite[Theorem 1.3]{Coskun2011RigidAN} for a construction of a counterexample when all conditions fail.
\end{remark}

As a corollary, we recover the classification of rigid Schubert classes in Grassmannians which was first proved by Coskun \cite{Coskun2011RigidAN}.

\begin{theorem}\cite[Theorem 1.3]{Coskun2011RigidAN}
\label{rigid class in g}
The Schubert class $\sigma_a=\sigma_{a_1,...,a_k}$ is rigid if and only if all essential subindices are rigid.
\end{theorem}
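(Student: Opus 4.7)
The forward direction ($\sigma_a$ rigid implies all essential subindices rigid) is straightforward: if $\sigma_a$ is rigid, then any representative $X$ equals a Schubert variety $\Sigma_a(F_\bullet)$, and at each essential $a_i$ the flag element $F_{a_i}$ is characterized as the unique $a_i$-dimensional subspace satisfying the incidence condition. Uniqueness here holds because essentiality of $a_i$ means the condition $\dim(\Lambda\cap F_{a_i})\geq i$ is not subsumed by the condition at $a_{i+1}$, so an alternative $F'$ would yield a genuinely distinct Schubert presentation of $X$ at position $a_i$, contradicting the rigid structure.

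For the reverse direction, I would proceed by induction on $n$ (with $k$ varying). Let $X$ represent $\sigma_a$, and assume all essential subindices are rigid, providing a unique $F_{a_i}$ for each essential $a_i$. My plan uses three reductions, each shrinking the ambient Grassmannian:
\begin{enumerate}
\item If $a_k<n$, Proposition \ref{contained in plane} yields the unique $F_{a_k}$ with $\Lambda\subset F_{a_k}$ for every $\Lambda\in X$; restrict to $G(k,F_{a_k})$ and invoke the inductive hypothesis.
\item If the first essential subindex $a_{i_0}$ satisfies $a_{i_0}=i_0$, Proposition \ref{containing point} yields $F_{i_0}\subset\Lambda$ for every $\Lambda\in X$; project $V\to V/F_{i_0}$ into $G(k-i_0,n-i_0)$ and invoke the inductive hypothesis.
\item If $a_k=n$ and $a_1>1$ (so neither (1) nor (2) applies), apply the duality $G(k,n)\cong G(n-k,n)$ and the corresponding involution on Schubert indices, so that in the dual either $b_1=1$ or $b_{n-k}<n$ and reductions (1) or (2) apply.
\end{enumerate}

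The crux is showing the unique $F_{a_i}$'s descend correctly to the smaller Grassmannian, which amounts to verifying nesting. For (1): since $\Lambda\subset F_{a_k}$, we have $\Lambda\cap F_{a_i}\subset F_{a_i}\cap F_{a_k}=:G$ with $\dim G\leq a_i$; if $G\subsetneq F_{a_i}$, then any $a_i$-dimensional subspace $H$ containing $G$ satisfies $\dim(\Lambda\cap H)\geq\dim(\Lambda\cap G)=\dim(\Lambda\cap F_{a_i})\geq i$, contradicting the uniqueness of $F_{a_i}$; hence $F_{a_i}\subset F_{a_k}$ for every essential $i<k$. For (2): an analogous projection-based uniqueness argument, applied inside $V/F_{i_0}$, forces $F_{i_0}\subset F_{a_j}$ for every essential $a_j>a_{i_0}$. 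After all reductions, the essential subspaces form a partial flag, which I extend to a complete flag $F_\bullet$ by inserting arbitrary intermediate subspaces at non-essential positions. Since for non-essential $a_i$ (where $a_{i+1}=a_i+1$) the condition $\dim(\Lambda\cap F_{a_i})\geq i$ follows from $\dim(\Lambda\cap F_{a_{i+1}})\geq i+1$ together with $F_{a_i}\subset F_{a_{i+1}}$ of codimension one, we have $X\subset\Sigma_a(F_\bullet)$; comparing cohomology classes and using irreducibility of $\Sigma_a(F_\bullet)$ of the same dimension yields $X=\Sigma_a(F_\bullet)$.

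The main obstacle is step (3): verifying that essentiality and rigidity of subindices transfer correctly under the conjugate-partition involution, and that the flag element produced in the dual Grassmannian lifts back to the unique $F_{a_i}$ of the original $X$. Once this bookkeeping is dispatched, the induction terminates at trivial base cases (a single point, or the full Grassmannian), and the argument goes through uniformly.
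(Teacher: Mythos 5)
Your overall strategy for the reverse direction (assemble the unique $F_{a_i}$ at the essential positions into a partial flag, conclude $X\subset\Sigma_a(F_\bullet)$, and compare dimensions) is the same as the paper's, and your reductions (1) and (2), together with the forward direction, are fine. The genuine gap is step (3). Under the duality $G(k,n)\cong G(n-k,n)$ the two conditions blocking your reductions are exchanged with each other: $a_k=n$ is equivalent to $a^*_1>1$ for the dual index, and $a_1>1$ is equivalent to $a^*_{n-k}=n$. Hence a class with $a_1>1$ and $a_k=n$ dualizes to a class with $a^*_1>1$ and $a^*_{n-k}=n$; the hard case is self-dual, and your claim that in the dual ``either $b_1=1$ or $b_{n-k}<n$'' is exactly backwards. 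Concretely, $\sigma_{2,5,8}$ in $G(3,8)$ has all essential subindices rigid (by conditions (1) and (3) of Theorem \ref{index}), satisfies $a_1>1$ and $a_k=n$, and dualizes to $\sigma_{2,3,5,6,8}$ in $G(5,8)$, which again satisfies both blocking conditions. So your induction never terminates on this case.

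What is actually missing there is the nesting $F_{a_i}\subset F_{a_j}$ for two intermediate essential indices, neither of ``containment'' type ($a_i=i$, handled by Proposition \ref{containing point}) nor of ``inclusion'' type ($j=k$ with $a_k<n$, handled by Proposition \ref{contained in plane}). Your uniqueness arguments establish nesting only against $F_{a_k}$ when $\Lambda\subset F_{a_k}$, and against $F_{i_0}$ when $F_{i_0}\subset\Lambda$; for $F_2\subset F_5$ in the example above neither applies, and without that nesting the set $\{F_{a_i}\}$ need not be a flag, so $\Sigma_a(F_\bullet)$ cannot even be formed. The paper closes exactly this gap with Lemma \ref{lemma 2.3}, which runs its own induction on $n$: if $a_1=1$ it projects from $F_1$ and passes to $G(k-1,n-1)$; if $a_1\neq1$ it intersects with a general hyperplane $H$, identifies $[X_H]$ via Corollary \ref{2.3}, and deduces $F_{a_i}\cap H\subset F_{a_j}\cap H$ for all such $H$. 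You would need to supply this lemma, or an equivalent nesting argument, for your induction to close.
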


The proof will be based on Theorem \ref{index} and the following lemma:
\begin{lemma}\label{lemma 2.3}
Assume $a_i,a_j$ are rigid, $i<j$ and let $F_{a_i},F_{a_j}$ be defined as in Theorem \ref{index}. Then $F_{a_i}\subset F_{a_j}$.
\end{lemma}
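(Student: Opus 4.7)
The plan is to argue by induction on $n+k$. The starting observation, immediate from the uniqueness assertion in Theorem \ref{index}, is that for each rigid sub-index $a_m$ the space $F_{a_m}$ equals the linear span of $\bigcup_{\Lambda\in X}(\Lambda\cap F_{a_m})$: were this span $L$ of smaller dimension, any $a_m$-dimensional subspace containing $L$ would also satisfy the incidence condition $\dim(\Lambda\cap \cdot)\ge m$, violating uniqueness. The base case is $j=k$: Proposition \ref{contained in plane} gives a unique $F_{a_k}$ with $\Lambda\subset F_{a_k}$ for every $\Lambda\in X$, so $\Lambda\cap F_{a_i}\subset F_{a_k}$ for each $\Lambda$ and passing to the span yields $F_{a_i}\subset F_{a_k}$.

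For $j<k$ I would first reduce to $a_k=n$ via Proposition \ref{contained in plane}. The inductive step then mimics the two reductions employed in the proof of Theorem \ref{index}(3). If $a_1=1$, Proposition \ref{containing point} gives a fixed line $F_1\subset \Lambda$ for every $\Lambda\in X$, and projection from $F_1$ produces $\bar X\subset G(k-1,n-1)$ of class $\sigma_{a_2-1,\ldots,a_k-1}$. When $i\ge 2$, a direct check shows that each of conditions (1)--(4) of Theorem \ref{index} transfers under the index shift, so $\bar a_{i-1}=a_i-1$ and $\bar a_{j-1}=a_j-1$ remain rigid in $\bar X$; the inductive hypothesis gives $\bar F_{a_i-1}\subset \bar F_{a_j-1}$, and pulling back to $V$ (both preimages contain $F_1$) yields $F_{a_i}\subset F_{a_j}$. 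The remaining configurations---namely $i=1$ with $a_1=1$, or $a_1\ne 1$---are handled by intersecting $X$ with a general hyperplane $H$: by Corollary \ref{2.3}, $X_H\subset G(k,n-1)$ has class $\sigma_{a_1,\ldots,a_s,a_{s+1}-1,\ldots,a_k-1}$ where $s$ is the largest integer with $a_s=s$, and induction applied to $X_H$ yields $F^H_{a_i-1}\subset F^H_{a_j-1}$. The proof of Theorem \ref{index}(3) established that as $H$ varies over a general family, $\bigcup_H F^H_{a_j-1}$ is a linear space equal to $F_{a_j}$ (and similarly for $F_{a_i}$), so $F_{a_i}=\bigcup_H F^H_{a_i-1}\subset \bigcup_H F^H_{a_j-1}=F_{a_j}$.

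The main obstacle is the bookkeeping needed for these reductions: the conditions (1)--(4) of Theorem \ref{index} must be verified to descend to the corresponding sub-indices in $\bar X$ or $X_H$ in every configuration, and certain edge cases require separate treatment. For example, when $a_2=3$ the reduced sub-index $\bar a_1=1$ becomes non-essential in $X_H$, so induction cannot be invoked verbatim at that position; in such a situation the adjacent sub-index $\bar a_2^H$ is itself rigid and its associated subspace is automatically contained in every $\Lambda\in X_H$, which recovers the desired containment by a dimension count inside $\Lambda$. Once these case distinctions are settled the induction closes and Lemma \ref{lemma 2.3} follows.
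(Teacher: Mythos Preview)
Your approach is correct and shares the inductive scaffolding (projection from $F_1$ when $a_1=1$, hyperplane section otherwise) with the paper's proof. The genuine difference is the choice of base case. The paper treats $a_i=1$ directly: if $F_1\not\subset F_{a_j}$, then $F_1+F_{a_j}$ has dimension $a_j+1$ and every $\Lambda$ meets it in dimension at least $j+1$ (since $F_1\subset\Lambda$ and $F_1\cap F_{a_j}=0$), so \emph{every} hyperplane of $F_1+F_{a_j}$ is an $a_j$-dimensional space satisfying the incidence condition, contradicting uniqueness of $F_{a_j}$. This dispatches the case $i=1$, $a_1=1$ in two lines and avoids the edge-case analysis you flag (where $a_1^H=1$ becomes non-essential in $X_H$).

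Your span characterization $F_{a_m}=\mathrm{span}\bigcup_{\Lambda\in X}(\Lambda\cap F_{a_m})$ is correct and yields a clean dual argument for $j=k$, which the paper does not isolate. What it buys you is a direct start at the top of the flag rather than the bottom; what it costs is that the case $a_i=1$ must then be fed into the inductive machine, producing the bookkeeping you describe. Both routes close, but the paper's is shorter precisely because the $a_i=1$ step is handled non-inductively. Note also that your appeal to ``$\bigcup_H F^H_{a_j-1}=F_{a_j}$ from the proof of Theorem~\ref{index}(3)'' is only literally established there under hypothesis (3); for a rigid $a_j$ coming from condition (4) you need the additional observation that $F_{a_j}\cap H$ already satisfies the incidence condition for $X_H$ and hence equals $F^H_{a_j-1}$ by uniqueness. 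The paper makes this same identification tacitly in its hyperplane step.
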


\begin{proof}
First assume $a_i=1$. If $F_1\not\subset F_{a_j}$, let $F_{a_j+1}=F_1+F_{a_j}$. Then for every $\Lambda\in X$, $$\dim(\Lambda\cap F_{a_j+1})=\dim(\Lambda\cap F_{1})+\dim(\Lambda\cap F_{a_j})=j+1$$ and hence for every codimension 1 linear subspace $V$ in $F_{a_j+1}$,
$$\dim(\Lambda\cap V)=j.$$
Therefore $a_j$ is not essential (since such $F_{a_j}$ is not unique), we reach a contradiction. 

Now assume $a_i\neq1$. We will prove it using induction on $n$. It is trivial when $n=1$. 

If $a_1=1$, then $F_{1}\subset F_{a_j}$ for all essential rigid $j>1$. Let $\bar{X}\subset G(k-1,n-1)$, $\bar{F}_{a_i}$ and $\bar{F}_{a_j}$ be the image of $X$, $F_{a_i}$ and $F_{a_j}$ under the projection from $F_1$. By induction $\bar{F}_{a_i}\subset\bar{F}_{a_j}$ and hence $F_{a_i}\subset F_{a_j}$.

If $a_1\neq1$, then for every hyperplane $H$, $X_H\subset G(k,n-1)$ is non-empty and by induction $$F_{a_i}\cap H=F_{a_i}^H\subset F^H_{a_j}=F_{a_j}\cap H.$$ Hence $F_{a_i}\subset F_{a_j}$.
\end{proof}

\begin{proof}[Proof of Theorem \ref{rigid class in g}]
If one of the essential indices is not rigid, then by Theorem \ref{index}, we can find a subvariety $X$ which is not a Schubert variety that presents $\sigma_a$.

Now assume all essential indices are rigid. Let $I$ be the index set consisting of all essential indices in $a$, and $X$ be a subvariety representing $\sigma_a$. Then by Theorem \ref{index}, for each $i\in I$, we can find a fixed linear space $F_{a_i}$ such that $\dim(\Lambda\cap F_{a_i})\geq i$ for all $\Lambda\in X$. Moreover, by Lemma \ref{lemma 2.3}, $\{F_{a_i}\}$ forms a partial flag. Define
$$\Sigma:=\{\Lambda\in G|\dim(\Lambda\cap a_i)\geq i, i\in I\}.$$
Then $\Sigma$ is a Schubert variety and $X\subset \Sigma$. Since $\dim(X)=\dim(\Sigma)$, $X$ is the Schubert variety $\Sigma$. 
\end{proof}

\section{Restriction Varieties}\label{sec2}
In the next section, we will make frequent use of restriction varieties. For the reader's convenience, we review some basic facts about restriction varieties and recall an algorithm due to Coskun \cite{Coskun2011RestrictionVA} computing their cohomology classes in terms of Schubert classes.

Let $V$ be an $n$-dimensional vector space over $\mathbb{C}$ and let $q$ be a nonsingular symmetric bilinear form. 
Geometrically, the form $q$ defines a smooth quadric hypersurface $Q$ in $\mathbb{P}(V)$ by setting $Q(\bar{x})=q(x,x)$. A subspace $W$ is isotropic if and only if $\mathbb{P}(W)$ lies on $Q$. 

Let $Q_{d}^r$ denote a subquadric of $Q$ of corank $r$ which is obtained by restricting $Q$ to a $d$-dimensional linear space. Notice that for an isotropic subspace $F_{a_i}$, the intersection $\mathbb{P}(F_{a_i}^\perp)\cap Q$ is a sub-quadric $Q_{n-a_i}^{a_i}$. Therefore we may re-define the Schubert varieties directly with respect to a flag of isotropic subspaces and sub-quadrics:
$$F_{a_1}\subset...\subset F_{a_s}\subset Q_{d_{k-s}}^{r_{k-s}}\subset...\subset Q_{d_1}^{r_1}$$
where $d_j+r_j=n$, $a_i\neq r_j+1$ for all $i,j$, and by requiring the singular locus of $Q_{d_j}^{r_j}$ is contained in the singular locus of $Q_{d_{j+1}}^{r_{j+1}}$ and $F_{a_i}$ is either contained or contains the singular locus of $Q_{d_j}^{r_j}$. From here, we can see under the natural inclusion $$OG(k,n)\hookrightarrow OG(k,n+1),$$ the image of a Schubert variety is no longer a Schubert variety in $OG(k,n+1)$, since for the Schubert varieties in $OG(k,n+1)$, the sum of $r_i$ and $d_i$ should be $n+1$ for the defining quadrics. Due to this observation, we extend our focus to the restriction varieties which allow the coranks of the defining quadrics to be less than $n-d_i$. 

\begin{definition}\cite[Definition 4.2]{Coskun2011RestrictionVA}
\label{sequence}
Given a sequence consisting of isotropic subspaces $F_{a_i}$ of $V$ and subquadrics $Q_{d_j}^{r_j}$ :
$$F_{a_1}\subsetneq...\subsetneq F_{a_s}\subsetneq Q_{d_{k-s}}^{r_{k-s}}\subsetneq...\subsetneq Q^{r_1}_{d_1}$$ such that

$(1)$ For every $1\leq j\leq k-s-1$, the singular locus of $Q_{d_j}^{r_j}$ is contained in the singular locus of $Q_{d_{j+1}}^{r_{j+1}}$;

$(2)$ For every pair $(F_{a_i},Q_{d_j}^{r_j})$, $\dim(F_{a_i}\cap \text{Sing}(Q_{d_j}^{r_j}))=\min\{a_i,r_j\}$;

$(3)$ Either $r_i=r_1=n_{r_1}$ or $r_t-r_i\geq t-i-1$ for every $t>i$. Moreover, if $r_t=r_{t-1}>r_1$ for some $t$, then $d_i-d_{i+1}=r_{i+1}-r_i$ for every $i\geq t$ and $d_{t-1}-d_t=1$;

$(A1)$ $r_{k-s}\leq d_{k-s}-3$;

$(A2)$ $a_i-r_j\neq 1$ for all $1\leq i\leq s$ and $1\leq j\leq k-s$;

$(A3)$ Let $x_j=\#\{i|a_i\leq r_j\}$. For every $1\leq j\leq k-s$, $$x_j\geq k-j+1-\left[\frac{d_j-r_j}{2}\right].$$
We define the associated restriction variety 
$$V(F_\bullet,Q_\bullet):=\{\Lambda\in OG(k,n)|\dim(\Lambda\cap F_{a_i})\geq i,\dim(\Lambda\cap Q_{d_j}^{r_j})\geq k-j+1,1\leq i\leq s, 1\leq j\leq k-s\}.$$
\end{definition}
\begin{remark}
In case $n$ is even and $n=2n_i$, we denote $F_{n_i}$ and $F_{n_i}'$ the isotropic subspaces in different connected components.
\end{remark}
\begin{remark}
The Schubert varieties in $OG(k,n)$ are the restriction varieties with $d_j+r_j=n$ for all $1\leq j\leq k-s$.
\end{remark}

\begin{definition}\cite[Definition 4.4]{Coskun2011RestrictionVA}
Given a sequence $(F_\bullet,Q_\bullet)$ that satisfies the conditions $(1)-(3)$ as in the Definition \ref{sequence}, the associated quadric diagram $D$ consists of

$(I)$: a sequence of $n$ numbers where $l$-th number equals to $j$ if $r_{j-1}< l\leq r_{j}$ (we set $r_0=0$) and equals to $0$ if $l>r_{k-s}$; and

$(II)$: for each $i$, a bracket $]$ right after $n_i$-th digit and a brace $\}$ right after $d_j$-th digit. In case $n$ is even and the sequence contains $F'_{n/2}$, use $]'$ instead of $]$ after $(n/2)$-th digit.

The associated diagram $D$ is called admissible if the sequence $(F_\bullet,Q_\bullet)$ also satisfies the conditions $(A1)-(A3)$ as in the Definition \ref{sequence}.
\end{definition}

\begin{definition}\label{da}\cite[Definition 3.11]{Coskun2014RigidityOS}
Let $D$ be an admissible diagram. If $d_j+r_j<d_{j-1}+r_{j-1}$ for some $j$, set
$$\kappa:=\max\{j|d_j+r_j<d_{j-1}+r_{j-1}\}.$$
If $d_j+r_j=d_1+r_1\neq n$ for all $j$, set $\kappa=1$.

Let $D^a$ be the diagram obtained by changing the $(r_\kappa+1)$-st digit in $D$ to $\kappa$. If there is a bracket in $D^a$ to the right of the $(r_\kappa+1)$-st digit, let $D^b$ be the diagram obtained from $D^a$ by moving the leftmost bracket among such brackets to the right of the $(r_\kappa+1)$-st digit.
\end{definition}
\begin{definition}\cite[Definition 4.6]{Coskun2011RestrictionVA}
Assume $n_s>r_\kappa$. If $r_j\geq n_{x_\kappa+1}$ for some $j$, set $$y_\kappa=\max\{j|r_j\leq n_{x_\kappa+1}\}.$$ Otherwise, set $y_k=k-s+1$.
\end{definition}

\begin{algorithm}
\caption{\cite[Algorithm 3.8]{Coskun2011RestrictionVA} Diagrams derived from $D^a$}.

\begin{steps}
\item If $D^a$ fails condition $(A3)$ in the Definition \ref{sequence}, then discard $D^a$. Else, proceed to the next step.
\item If $D^a$ fails condition $(A2)$, then change the digit to the right of the rightmost $\kappa$ to $\kappa$ and move the $\kappa$-th brace from the right one position to the left. Else, proceed to the next step.
\item If $D^a$ fails condition $(A1)$, then replace $D^a$ by two identical diagrams $D^{a_1}$ and $D^{a_2}$ obtained by replacing the leftmost brace with a bracket one position to the left and changing the digits equal to $k-s$ to $0$. If the rightmost bracket in $D^{a_2}$ is right after the $\frac{n}{2}$-th digit, replace this bracket by $]'$. 

\end{steps}

\end{algorithm}

\begin{algorithm}
\caption{\cite[Algorithm 3.9]{Coskun2011RestrictionVA} Diagrams derived from $D^b$}
\begin{algorithmic}
\STATE If $D^b$ fails condition $(A2)$, suppose it fails for the $j$-th bracket. Let $i$ be the integer immediately to the left of the $j$-th bracket. Replace this $i$ by $i-1$ and move $i-1$-st brace from the right one position to the left. Repeat either until the resulting sequence is admissible or two braces occupy the same position. In the latter case, discard $D^b$.
\end{algorithmic}
\end{algorithm}

\begin{algorithm}
\caption{\cite[Algorithm 3.10]{Coskun2011RestrictionVA} Diagrams derived from an admissible quadric diagram $D$}\label{algorithm2}
\begin{algorithmic}
\REQUIRE an admissible diagram $D$. 
\IF{ $r_j+d_j=n$ for every $1\leq j\leq k-s$, }
\RETURN $D$
\ELSE
\IF{$n_{x_\kappa+1}-r_\kappa-1>y_\kappa-\kappa$ or $n_s\leq r_\kappa$ in $D$, }
\RETURN the diagrams derived from $D^a$;
\ENDIF
\IF{$D^a$ violates condition (A3) in Definition \ref{sequence},}
\RETURN the diagrams derived from $D^{b}$;

\ELSE 
\RETURN the diagrams derived from both $D^a$ and $D^b$.
\ENDIF
\ENDIF
\end{algorithmic}
\end{algorithm}
\newpage
\begin{theorem}\cite[Theorem 5.12]{Coskun2011RestrictionVA}
Let $V(F_\bullet,Q_\bullet)$ be a restriction variety. Then the rational equivalence class of $V(F_\bullet,Q_\bullet)$ is the sum of classes of restriction varieties derived from $V(F_\bullet,Q_\bullet)$ by Algorithm \ref{algorithm2}.
\end{theorem}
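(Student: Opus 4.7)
The plan is to prove this by a specialization-based degeneration argument, following the general strategy Coskun introduced for Schubert and restriction varieties. The key idea is to construct a flat one-parameter family whose generic fiber is $V(F_\bullet,Q_\bullet)$ and whose special fiber is a (reduced) union of the restriction varieties produced by Algorithm \ref{algorithm2}; then the rational equivalence class of the general fiber equals the sum of the classes of the components of the special fiber.

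First I would set up the degeneration. The index $\kappa$ singled out in Definition \ref{da} is the smallest index where the quadric $Q_{d_\kappa}^{r_\kappa}$ is ``off'' (either its corank can be increased, or in the boundary case $\kappa=1$ we can absorb a new isotropic vector). Concretely, I would choose a general vector $v \notin \mathrm{Sing}(Q_{d_\kappa}^{r_\kappa})$ lying in the span of $Q_{d_\kappa}^{r_\kappa}$, and consider a one-parameter family of quadrics $Q_{d_\kappa}^{r_\kappa}(t)$ whose singular locus is $\mathrm{Sing}(Q_{d_\kappa}^{r_\kappa}) \oplus \langle v(t)\rangle$ with $v(t)$ specializing at $t=0$ so that the $(r_\kappa+1)$-st digit of the diagram becomes $\kappa$. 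The corresponding family of incidence varieties $\mathcal V \to \mathbb A^1$ is flat because the generic fiber is irreducible of the correct dimension, and taking the scheme-theoretic closure preserves the class.

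Second, I would identify the components of the special fiber $\mathcal V_0$ set-theoretically. A $k$-plane $\Lambda$ in $\mathcal V_0$ must meet the enlarged singular locus of $Q_{d_\kappa}^{r_\kappa}$; this forces a dichotomy. Either $\Lambda$ meets the larger singular locus in the expected dimension of the derived variety $D^a$, or the intersection jumps in dimension, forcing the linear condition one position further to the right, which is precisely the move to $D^b$. One then checks that the additional admissibility failures handled by Algorithms 2.1--2.3 correspond exactly to the geometric situations where a component is either empty (and must be discarded), or is embedded in a smaller sub-Grassmannian where a bracket has to move, or splits into two pieces indexed by the two spinor components when a bracket crosses $n/2$. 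Each such correction is justified by a dimension count and, in the spinor case, by the two connected components of maximal isotropic subspaces.

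Finally, I would verify that every component of $\mathcal V_0$ appears with multiplicity one. Since restriction varieties are reduced and irreducible of known dimension by induction (the base case being Schubert varieties), it suffices to show that the scheme-theoretic intersection of $\mathcal V_0$ along each component is generically reduced. This follows from a local computation in a tangent space: at a generic point of each component, the defining incidence conditions cut out smooth subvarieties meeting transversely, which can be checked by writing the incidence as the vanishing locus of a bundle map and verifying the rank of its differential along the degeneration. The main obstacle is this last step combined with the case analysis in Step 2: one must check carefully that the corrections in the algorithm exhaust all ways the limit variety can fail admissibility, and that none of these corrections introduces extra multiplicity. Once reducedness and transversality of the special fiber are established, flatness gives $[V(F_\bullet,Q_\bullet)] = \sum [V(F_\bullet',Q_\bullet')]$ summed over the diagrams produced by Algorithm \ref{algorithm2}.
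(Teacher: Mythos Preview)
The paper does not actually prove this theorem: it is quoted verbatim from \cite[Theorem 5.12]{Coskun2011RestrictionVA} and used as a black box, so there is no ``paper's own proof'' to compare against. Your sketch is a faithful outline of the specialization argument Coskun gives in the cited reference---a flat one-parameter degeneration of the sequence $(F_\bullet,Q_\bullet)$ that increases the corank of $Q_{d_\kappa}^{r_\kappa}$ by one, identification of the components of the flat limit with the diagrams $D^a$ and $D^b$ (with the algorithmic corrections handling non-admissible intermediate diagrams), and a multiplicity-one check via a local tangent-space computation---so in that sense your approach is the correct one, just not something the present paper undertakes.

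One caution: your description of the degeneration is slightly off. In Coskun's argument the quadric $Q_{d_\kappa}^{r_\kappa}$ itself is not moved; rather the linear span of the \emph{next} quadric (or of $Q$ when $\kappa=1$) is specialized so that the kernel of the restricted form jumps, which is what makes the $(r_\kappa+1)$-st digit become $\kappa$. Also, the multiplicity-one check in the original reference is not done by a generic transversality computation alone but by a dimension count showing that each component of the flat limit already has the correct dimension to be a restriction variety, combined with irreducibility of restriction varieties; you would need to supply that irreducibility statement (which is itself a nontrivial lemma in \cite{Coskun2011RestrictionVA}) to close the argument.
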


\begin{remark}\label{classing}
The above algorithm can also be used to find the rational equivalence class of the image of a Schubert variety under the natural inclusion 
$$i: OG(k,n)\hookrightarrow G(k,n).$$
Let $X=\Sigma_{a;b}$ be a Schubert variety in $OG(k,n)$. By passing to $OG(k,2n+1)$, $X$ can be considered as a restriction variety defined by
$$F_{a_1}\subset...\subset F_{a_s}\subset Q_{n-b_{k-s}}^{r_{k-s}}\subset...\subset Q_{n-b_1}^{b_1}.$$
Applying Algorithm \ref{algorithm2} to the quadric diagram corresponding to this restriction variety, we end up with quadric diagrams without braces. Those quadric diagrams give the cohomology class of $i(X)$ in terms of Schubert classes in $G(k,n)$.

\end{remark}

\section{Rigidity problems in Orthogonal Grassmannians}\label{sec3}
In this section, we prove Theorem \ref{main theorem}. We begin with an example in $OG(2,6)$.
\begin{example}\label{example1}
Consider the orthogonal Grassmannian $OG(2,6)$. Let $(a;b)$ be a Schubert index with $s=1$. We claim that if $a=1$ is essential, then it is rigid. The possible Schubert classes in $OG(2,6)$ with $a=1$ essential are $$\sigma_{1,3;}\ \  ,\ \ \ \sigma_{1;2}\ \ ,\ \ \  \sigma_{1;1}$$ We will investigate them separately. 

For the Schubert classes $\sigma_{1,3;}$ and $\sigma_{1;2}$, consider the natural inclusion $$i:OG(2,6))\hookrightarrow G(2,6).$$ Let $X$ be a subvariety representing $\sigma_{1,3}$ or $\sigma_1^2$ in $OG(2,6)$, then $$[i(X)]=\sigma_{1,3;}\in A(G(2,6)).$$ By Theorem \ref{index}, there is a unique one dimensional vector space $F_1$ that is contained in every $k$-plane parametrized by $X$. Therefore the sub-index $a_1=1$ is rigid. 

Now consider the Schubert class $\sigma_{1;1}$. Let $X$ be a subvariety representing $\sigma_{1;1}$ in $OG(2,6)$. The $k$-planes parametrized by $X$ sweep out a quadric $Q_X$ of dimension $3$ with corank at most $1$ \cite[Lemma 6.2]{Coskun2014RigidityOS}. We will show that the corank of $Q_X$ has to be 1 and the singular locus of $Q_X$ is the unique projective point that is contained in every $k$-plane parametrized by $X$.

Suppose, for a contradiction, that $Q_X$ is smooth, then $X$ can be viewed as a subvariety in $OG(2,5)$. The only Schubert class in $OG(2,5)$ of the same dimension as $\sigma_{1;1}\in A(OG(2,6))$ is $\sigma_{2;0}$. Let $\Sigma=\Sigma_{2;0}$ be a Schubert vairety in $OG(2,5)$, then the image of $\Sigma$ under the natural inclusion $i':OG(2,5)\hookrightarrow OG(2,6)$ is the restriction variety defined by $$F_2\subset Q_{5}^0.$$ Apply Algorithm \ref{algorithm2}, we get$$[X]=[i'(\Sigma)]=\sigma_{1;1}+2\sigma_{2,3;}\neq \sigma_{1;1}\in A(OG(2,6)),$$ which is a contradiction. Therefore $Q_X$ has corank $1$. 

Let $q$ be the singular point of $Q_X$. Consider the following incidence correspondence
$$I:=\{(L,F_3)|L\subset\mathbb{P}(F_3), L\in X, F_3\subset Q_X\}\subset X\times OG(3,6).$$
Let $\pi_1:I\rightarrow X$ and $\pi_2:I\rightarrow OG(3,6)$ be the two projections. Let $Y$ be the image of $I$ under the second projection $\pi_2$. Let $F$ be a general point in $Y$. We claim that 
$$[\pi_1\circ\pi_2^{-1}(F)]=\sigma_{1,3}\in A(G(2,F)).$$
For a line $L\in X$, $L^\perp\cap Q$ is a union $\mathbb{P}(F_3)\cup \mathbb{P}(F'_3)$ of two $3$-planes belonging to different component. Therefore the fibers of $\pi_1$ have dimension $0$ and hence
$\dim(I)=\dim(X)=2.$
Since $Q_X$ is swept out by the lines parametrized by $X$, the image of $\pi_2$ has dimension at least $1$. Meanwhile the locus of isotropic subspaces of dimension 3 contained in $Q_X$ has dimension $1$, we obtain $\dim(\pi_2(I))=1$ and thus a general fiber of $\pi_2$ over the image has dimension $1$. Therefore$$[\pi_1\circ\pi_2^{-1}(F)]=m\sigma_{1,3}\in A(G(2,F)).$$ 
Notice that for a general point $p\in Q_X$, there is a unique line parametrized by $X$ that contains $p$. Therefore $m=1$.

By Theorem \ref{index}, there exists a unique point $p_F$ that is contained in every line parametrized by $\pi_1\circ\pi^{-1}_2(F)$. We claim that $p_F$ is the singular point $q$, and therefore every line parametrized by $X$ is contained in some $\pi_1\circ\pi^{-1}_2(F)$ and thus contains $q$.

Suppose for a contradiction that $p_F\neq q$. Let $p\neq q$ be a general point in $F$ that is not contained in the line $\overline{qp_F}$. Let $F'$ be the other $3$-plane containing $\overline{pq}$. Then there are at least two different lines parametrized by $X$ passing through $p$, namely $\overline{pp_F}$ and $\overline{pp_{F'}}$. This contradicts the fact that there is only one line parametrized by $X$ that passes through a general point in $Q_X$. We conclude that $p_F=q$ and it completes the proof. 
\end{example}

More generally, we have:

\begin{proposition}
Assume $n\geq 2k+2$. Let $\sigma_{a;b}$ be a Schubert class in $OG(k,n)$. If $s=k-1$ and $a=(1,...,k-1)$,  then $a_{k-1}=k-1$ is rigid.\label{1 is rigid}
\end{proposition}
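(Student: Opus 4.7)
The plan is to adapt the strategy used for $\sigma_1^1$ in Example~\ref{example1} to arbitrary $k$. Let $X\subset OG(k,n)$ be a subvariety with $[X]=\sigma_a^b$, where $s=k-1$ and $a=(1,\ldots,k-1)$, and write $b_1$ for the unique $b$-subindex (admissibility of the Schubert index forces $b_1\ge k-1$). I would first analyze the swept-out projective variety $Q_X\subset\mathbb{P}(V)$, namely the union of the $\mathbb{P}(\Lambda)$ as $\Lambda$ ranges over $X$. Since every $\Lambda$ is isotropic, $Q_X\subset Q$, and a dimension count gives $\dim Q_X\leq n-b_1-2$, matching the Schubert situation in which $Q_X=Q\cap\mathbb{P}(F_{b_1}^\perp)$ is a quadric of corank $b_1$ with singular locus $\mathbb{P}(F_{b_1})\supset\mathbb{P}(F_{k-1})$. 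The goal is to show that this structural picture persists for an arbitrary representative.

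Next, I would locate $F_{k-1}$. The crucial estimate is that the smallest subquadric of $Q$ containing $Q_X$ has corank at least $k-1$, so that its radical is an isotropic subspace of dimension at least $k-1$. I would argue by contradiction: if the corank were strictly smaller, then $X$ would be realized inside $OG(k,n')$ for some $n'<n$, and the cohomology class of its image under the natural inclusion $OG(k,n')\hookrightarrow OG(k,n)$---computed via Algorithm~\ref{algorithm2} and Remark~\ref{classing}---would differ from $\sigma_a^b$, paralleling the contradiction $[i'(\Sigma)]=\sigma_1^1+2\sigma_{2,3}\neq\sigma_1^1$ obtained in Example~\ref{example1}. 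The candidate $F_{k-1}$ is then the uniquely determined $(k-1)$-dimensional subspace of the radical of $Q_X$ singled out by the combinatorics of $(a,b)$.

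Finally, I would verify $F_{k-1}\subset\Lambda$ for every $\Lambda\in X$ through the incidence correspondence
\begin{equation*}
I=\{(\Lambda,M)\in X\times OG(k,n):\Lambda\subset M,\ \mathbb{P}(M)\subset Q_X\},
\end{equation*}
with its two projections $\pi_1,\pi_2$. A general fiber of $\pi_2$ over $M\in\pi_2(I)$ is a subvariety of $G(k,M)$, and a multiplicity-one counting argument (just as in the example, using that a general point of $Q_X$ lies on a unique $\Lambda\in X$) shows its class is $\sigma_{1,\ldots,k-1,\dim M}$. Theorem~\ref{index} (case~(2) with $i=k-1$) then produces a unique $F_{k-1}^M\subset M$ contained in every $\Lambda$ in that fiber, and a contradiction argument modeled on the step ``$p_F=q$'' in Example~\ref{example1} forces $F_{k-1}^M=F_{k-1}$ for every such $M$. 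Hence $F_{k-1}\subset\Lambda$ for all $\Lambda\in X$, and uniqueness of $F_{k-1}$ is automatic since the radical of $Q_X$ is intrinsically determined.

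The main obstacle will be the rigorous verification of the corank estimate in the second step: translating the combinatorial output of Algorithm~\ref{algorithm2} into the required geometric statement about the ambient subquadric of $Q_X$ demands a careful case analysis. The hypothesis $n\geq 2k+2$ is precisely what ensures enough room for the alternative reductions to $OG(k,n')$ to produce cohomology classes that strictly differ from $\sigma_a^b$, making the contradiction step effective.
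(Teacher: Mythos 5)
Your overall strategy (study the swept-out quadric $Q_X$, bound its corank from below by ruling out smaller ambient quadrics via Algorithm~\ref{algorithm2}, then propagate Theorem~\ref{index} along an incidence correspondence of linear spaces inside $Q_X$) matches the paper's treatment of the extreme cases, and your corank estimate is exactly the paper's Lemma~\ref{lemma 5.8}. But there are two genuine gaps.

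First, the incidence correspondence with maximal isotropic subspaces $M\subset Q_X$ only yields $[X_M]=\sigma_{1,\ldots,k-1,\dim M}$ when the family of such $M$ is at most one-dimensional, which forces $b_1$ to be within $2$ of $[n/2]$. For smaller $b_1$ the maximal isotropics of $Q_X$ form a family of dimension quadratic in $n-2b_1$, the fibers of $\pi_1$ are positive-dimensional, and the dimension count no longer forces the fiber of $\pi_2$ to have the dimension of $\sigma_{1,\ldots,k-1,\dim M}$; the multiplicity-one argument collapses. The paper confines this argument to $b=n/2-2$ (Lemma~\ref{lemma 5.7}) and handles all smaller $b$ by an entirely different mechanism you do not have: an induction on $b$ using the incidence variety of hyperplanes $H\supset\Lambda$, the fact that $[X_H]=\sigma_{1,\ldots,k-1}^{b+1}$ strictly increases $b$, and a computation of $[\pi_2(I)]$ in $G(n-1,n)$ followed by Theorem~\ref{index} applied in the dual Grassmannian. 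Without some such induction your argument does not reach generic $b_1$.

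Second, when the corank $r$ of $Q_X$ exceeds $k-1$ (which happens whenever $r=b_1>k-1$), the radical of $Q_X$ has dimension $b_1>k-1$ and there is no ``uniquely determined $(k-1)$-dimensional subspace of the radical singled out by the combinatorics''; your closing claim that uniqueness is automatic because the radical is intrinsic is false in this range. The paper resolves this with Lemma~\ref{5.5}: the minimal irreducible $Y\subset Q$ meeting every $\mathbb{P}(\Lambda)$ in dimension $\geq k-2$ has dimension $k-2$ or $n-b-3$, and since $Y$ sits inside the radical (of dimension $b-1<n-b-3$) the first alternative holds and $Y$ is the desired linear space. Some substitute for this dichotomy is needed to locate $F_{k-1}$ inside a too-large radical. (Also note that when $r<b_1$ the correct move is not a contradiction but a reduction: $X$ becomes a Schubert class in $OG(k,n-b+r)$ with the same $a$ and smaller ambient dimension, handled by induction on $n$; Algorithm~\ref{algorithm2} only produces a contradiction when $r<k-1$.)
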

We will need the following lemmas:
\begin{lemma}\label{5.5}
Assume $n\geq 2k+2$. Let $X$ be a subvariety of $OG(k,n)$ representing the Schubert class $\sigma_{a;b}$ where $a=(1,...,k-1)$. Let $Y$ be an irreducible subvariety of $Q$ with the smallest dimension such that $\dim(Y\cap\mathbb{P}(\Lambda))\geq k-2$ for all $\Lambda\in X$. Then $\dim(Y)=k-2$ or $n-b-3$.
\end{lemma}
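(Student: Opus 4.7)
The plan is to analyze the minimal $Y$ via its intersections $L_\Lambda := \mathbb{P}(\Lambda) \cap Y$ for $\Lambda \in X$, and to establish a dichotomy: either these intersections all coincide with a fixed projective $(k-2)$-plane (giving $\dim Y = k-2$), or they generically sweep out a codimension-one subvariety of the swept-out quadric (giving $\dim Y = n-b-3$).

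First I would show that a general $\Lambda \in X$ is not contained in $Y$. If it were, $Y$ would contain the swept-out variety $Q_X := \overline{\bigcup_{\Lambda \in X} \mathbb{P}(\Lambda)}$, which has dimension $n-b-2$; but a general hyperplane section $Q_X \cap H$ is an irreducible subvariety of dimension $n-b-3$ strictly smaller than $Y$, still meeting every $\mathbb{P}(\Lambda)$ in a hyperplane of dimension $k-2$ (since $\mathbb{P}(\Lambda) \subset Q_X$), contradicting the minimality of $Y$. Therefore $L_\Lambda$ is a projective linear subspace of dimension exactly $k-2$ for generic $\Lambda$; since the irreducible subvariety $\overline{\bigcup_\Lambda L_\Lambda} \subseteq Y$ still satisfies the hypothesis, minimality forces $Y = \overline{\bigcup_\Lambda L_\Lambda}$. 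Now consider the rational map $\phi : X \to G(k-1, n)$ sending a general $\Lambda$ to the $(k-1)$-dimensional vector subspace underlying $L_\Lambda$, with image $M := \overline{\phi(X)}$; since $Y$ is the union of the $(k-2)$-planes $\{\mathbb{P}(L)\}_{L \in M}$, we have $\dim Y \leq \dim M + (k-2) \leq \dim X + (k-2) = n-b-3$. The dichotomy to establish is: either $\dim M = 0$ (so all $L_\Lambda$'s equal a fixed isotropic $L$, giving $Y = \mathbb{P}(L)$ of dimension $k-2$), or $\phi$ is generically finite onto its image (so $\dim M = \dim X = n-b-k-1$ and $\dim Y = n-b-3$).

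The main obstacle is ruling out intermediate values of $\dim M$. Over a general $L \in M$, the fiber $\phi^{-1}(L)$ equals $\{\Lambda \in X : L \subset \Lambda\}$ (since $\mathbb{P}(L) \subset Y$ forces $L \subseteq L_\Lambda$ whenever $L \subset \Lambda$, and comparison of dimensions gives equality in the generic case where $\dim L_\Lambda = k-2$). This fiber is a subvariety of the sub-quadric $\{\Lambda \in OG(k, n) : L \subset \Lambda\}$, which is naturally isomorphic to the quadric of isotropic lines in $L^\perp / L$ of dimension $n - 2k$. I would compute its cohomology class as a restriction variety in this sub-quadric via Algorithm~\ref{algorithm2}, and use the rigidity built into the Schubert class $\sigma_a^b$ with $a = (1, \ldots, k-1)$ together with the hypothesis $n \geq 2k+2$ to argue that the only possibilities for the fiber are to be zero-dimensional (giving the generically finite case and $\dim Y = n-b-3$) or to sweep out all of $X$, the latter forcing $L$ to be a fixed $(k-1)$-plane contained in every $\Lambda \in X$ and hence $\dim M = 0$ and $\dim Y = k-2$.
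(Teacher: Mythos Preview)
Your strategy is different from the paper's and has genuine gaps at the decisive steps.

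First, you assert that $L_\Lambda = Y \cap \mathbb{P}(\Lambda)$ is a projective linear $(k-2)$-plane for general $\Lambda$. From $\mathbb{P}(\Lambda)\not\subset Y$ and $\dim L_\Lambda \geq k-2$ you only know that $L_\Lambda$ is a hypersurface in $\mathbb{P}(\Lambda)\cong\mathbb{P}^{k-1}$; nothing forces it to be irreducible or of degree~$1$, so the map $\phi$ to $G(k-1,n)$ is not well-defined as stated.

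Second, and more seriously, your mechanism for ruling out intermediate fiber dimensions does not work. The fiber $\phi^{-1}(L)=\{\Lambda\in X: L\subset\Lambda\}$ sits in the quadric $\{\Lambda: L\subset\Lambda\}\cong Q^{n-2k}$, but it is \emph{not} a restriction variety: $L$ is not chosen independently of $X$, so you do not know its class and Algorithm~\ref{algorithm2} tells you nothing. There is no rigidity input that forces this fiber to be $0$-dimensional or all of $X$. And even granting generic finiteness of $\phi$, you have only $\dim Y\leq\dim M+(k-2)$; the equality you claim (that a general point of $Y$ lies on finitely many $L_\Lambda$) is precisely what is still missing.

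The paper bypasses all of this with one intersection-theoretic step. It works with the point-incidence $I=\{(p,\Lambda): p\in Y,\ p\in\mathbb{P}(\Lambda),\ \Lambda\in X\}$ (so linearity of $L_\Lambda$ never arises) and shows that if $\dim Y>k-2$ then $\dim X_p=0$ for general $p\in Y$. The argument: since $\dim Y>k-2$, one can choose $p\in Y\cap\mathbb{P}(F_{k-1}^\perp)$ for a general isotropic $F_{k-1}$; if $\dim X_p>0$ then $[X_p]\cdot\sigma^{k,k-2,\ldots,0}\neq 0$, so some $\Lambda\in X_p$ meets a suitable $F_k^\perp$ in a point $p'\neq p$, forcing $\dim(\Lambda\cap F_{k-1}^\perp)\geq 2$ and contradicting the vanishing $[X]\cdot\sigma^{k,k-1,k-3,\ldots,0}=0$. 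Once $\dim X_p=0$, comparing the two fiber dimensions of $I$ gives $\dim Y=\dim I=\dim X+(k-2)=n-b-3$ immediately. This Schubert-calculus vanishing is the idea your proposal lacks.
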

\begin{proof}
Let $X$ be a representative of $\sigma_{a;b}$ in $OG(k,n)$ and assume $\dim(Y)>k-2$. Let $p\in Y$ be a general point and define $$X_p:=\{\Lambda\in X|p\in\mathbb{P}(\Lambda)\}.$$ 
We claim that $$[X_p]\cdot \sigma_{;k,k-2,...,0}=0$$
and $$\dim(Y)=n-b-3.$$
Suppose, for a contradiction, that $[X_p]\cdot \sigma_{;k,k-2,...,0}\neq0.$
Let $F_{k-1}$ be a general isotropic subspace of dimension $k-1$. Since $\dim(Y)>k-2$, $$Y\cap \mathbb{P}(F_{k-1}^\perp)\neq\emptyset.$$ 
Let $p\in Y\cap \mathbb{P}(F_{k-1}^\perp)$, $p\notin F_{k-1}$ and let $p^\perp$ be the orthogonal complement of $p$. Then $$F_{k-1}^\perp\not\subset p^\perp.$$ Let $q\in F_{k-1}^\perp$ be a general point which is not contained in $p^\perp$, and let $F_{k}$ be the span of $F_{k-1}$ and $q$. Then the line $\overline{pq}$ is not isotropic and hence $p\notin F_{k}^\perp$. Since $[X_p]\cdot \sigma_{;k,k-2,...,0}\neq0$, there exists a $k$-plane $\Lambda$ parametrized by $X$ such that $p\in\mathbb{P}(\Lambda)$ and $\mathbb{P}(\Lambda)$ meets $\mathbb{P}(F_{k}^\perp)$ in a different point $p'\neq p$. Since both $p$ and $p'$ are contained in $F_{k-1}^\perp$, $\dim(\Lambda\cap F_{k-1}^\perp)\geq 2$. This contradicts the fact that $[X]\cdot\sigma_{;k,k-1,k-3,...,0}=0$. We conclude that $[X_p]\cdot \sigma_{;k,k-2,...,0}=0$. 

Now consider the incidence correspondence 
$$I=\{(p,\Lambda)|p\in Y,\Lambda\in X, p\in\mathbb{P}(\Lambda)\}.$$
Let $\pi_1:I\rightarrow Y$ and $\pi_2:I\rightarrow X$ be the two projections. Since $\sigma_{;k,k-2,...,0}$ has codimension $1$ in $A(OG(k,n))$ and $[X_p]\cdot \sigma_{;k,k-2,...,0}=0$, we obtain $\dim(X_p)=0$. Therefore a general fiber of $\pi_1$ has dimension $0$. By the semi-continuity, for a general $\Lambda\in X$, $$\dim(\mathbb{P}(\Lambda)\cap Y)=k-2,$$ and hence a general fiber of $\pi_2$ has dimension $k-2$. Under the natural inclusion $OG(k,n)\hookrightarrow G(k,n)$, by Remark \ref{classing} and Algorithm \ref{algorithm2}, we get $$[i(X)]=2\sigma_{1,2,...,k-1,n-b-1}\in A(G(k,n))$$ and hence $\dim(X)=\dim(i(X))=n-b-k-1$. Therefore
$$\dim(Y)=\dim(X)+(k-2)=n-b-3.$$ 
\end{proof}
\begin{lemma}\label{lemma 5.7}
Let $\sigma_{a;b}$ be a Schubert class in $OG(k,n)$ with $a=(1,...,k-1)$ and $b=\frac{n}{2}-2$. Let $X$ be a subvariety representing the class $\sigma_{a;b}$. Let $Q_X$ be the quadric swept out by the $k$-planes parametrized by $X$. For a general maximal isotropic subspace $F=F_{b+2}$ whose projectivization is contained in $Q_X$ and contains at least one $k$-plane parametrized by $X$, define
$$X_{F}:=\{\Lambda\in X|\Lambda\subset \mathbb{P}(F)\}.$$
Then the cohomology class of $X_F$ in $G(k,F)$ is given by
$$[X_F]=\sigma_{1,2,...,k-1,b+2}.$$
\end{lemma}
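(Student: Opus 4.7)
The plan is to generalize the argument of Example~\ref{example1} (the case $k=2, n=6, b=1$) to arbitrary $k$, identifying $X_F$ set-theoretically as a $\mathbb{P}^{n/2-k}$ of $k$-planes in $F$ through a fixed $(k-1)$-plane. First recall that $\dim X = n-b-k-1 = n/2+1-k$ (by the computation in the proof of Lemma~\ref{5.5}), while $\sigma_{1,2,\ldots,k-1,b+2}$ in $G(k,F)$ has dimension $b+2-k = n/2-k$, so $X_F$ should be a proper hypersurface-type subvariety of $X$.

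First I would describe the family $\mathcal{M}$ of maximal isotropic subspaces of $V$ whose projectivization lies in $Q_X$. Since every $\Lambda \in X$ is contained in $F_b^\perp$, the swept quadric $Q_X$ fills out $\mathbb{P}(F_b^\perp)\cap Q$, a sub-quadric with singular locus $\mathbb{P}(F_b)$; the maximal isotropics inside it are of the form $F_b + L$ where $L$ ranges over maximal isotropic subspaces of the $4$-dimensional nondegenerate quadric space $F_b^\perp/F_b$, i.e., the two rulings of a smooth quadric surface in $\mathbb{P}^3$. Hence $\mathcal{M}$ is a disjoint union of two $\mathbb{P}^1$'s. Next I would set up the incidence
\[
I \;=\; \{(F', \Lambda) \in \mathcal{M} \times X : \Lambda \subset F'\},
\]
with projections $\pi_1,\pi_2$. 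For a generic $\Lambda \in X$, the intersection $\Lambda \cap F_b$ has dimension $k-1$, so $\Lambda/(\Lambda\cap F_b)$ is a smooth point of $\mathbb{P}(F_b^\perp/F_b)\cap Q$, through which exactly two lines of the quadric surface pass; this gives exactly two maximal isotropic subspaces $F' \supset \Lambda$ inside $Q_X$. Thus $\pi_2$ is generically $2$-to-$1$ and $\dim I = \dim X = n/2+1-k$. Since $\pi_1(I)$ is $1$-dimensional (it dominates both rulings as $\Lambda$ varies), a general fiber $X_F = \pi_1^{-1}(F)$ has dimension $n/2-k$, matching the expected dimension.

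Finally I would exhibit a fixed $(k-1)$-plane $\widetilde F_{k-1} \subset F_b \subset F$ contained in every $\Lambda \in X_F$, forcing $X_F$ to lie inside the Schubert variety of $k$-planes in $F$ containing $\widetilde F_{k-1}$. Consider the rational map $\phi : X_F \dashrightarrow G(k-1,F_b)$ given by $\Lambda \mapsto \Lambda \cap F_b$. A fiber of $\phi$ over any $\widetilde F_{k-1} \in G(k-1,F_b)$ is contained in $\{\Lambda \subset F : \Lambda \supset \widetilde F_{k-1}\} \cong \mathbb{P}(F/\widetilde F_{k-1}) \cong \mathbb{P}^{n/2-k}$, whose dimension already matches $\dim X_F$. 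Hence $\phi$ is constant on each component, and set-theoretically $X_F$ fills the claimed $\mathbb{P}^{n/2-k}$. To promote this to the class identity with multiplicity one, I would verify that through a general point $p \in F$ passes a unique $\Lambda \in X_F$; this follows from the stronger fact that through a general point of $Q_X$ passes a unique $\Lambda \in X$ (a direct dimension count using $\dim X + (k-1) = n/2 = \dim Q_X$). The main obstacles will be rigorously identifying $Q_X$ with $\mathbb{P}(F_b^\perp)\cap Q$ (i.e., showing the $k$-planes really sweep out the whole sub-quadric, not merely a proper subvariety) and handling the possible reducibility of $X_F$ so that the constant-fiber argument for $\phi$ applies unambiguously.
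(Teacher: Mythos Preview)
Your incidence correspondence and dimension count are exactly the paper's argument: both compute $\dim X_F = n/2-k$ by showing the projection to $X$ is generically finite and the projection to the family of maximal isotropics in $Q_X$ has one-dimensional image. Your worry about identifying $Q_X$ with $\mathbb{P}(F_b^\perp)\cap Q$ is not a real obstacle for this lemma: if the corank of $Q_X$ were strictly less than $b$, then $Q_X$ would contain no $(n/2-1)$-planes at all and the statement would be vacuous; when such $F_{b+2}$ exist, the corank is forced to be $b$ and your description is correct.

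The genuine gap is in your last step. From ``each fiber of $\phi$ is contained in a $\mathbb{P}^{n/2-k}$'' you conclude ``$\phi$ is constant on each component.'' This is a non-sequitur: you have bounded the fiber dimension from \emph{above} by $n/2-k$, which is the same as $\dim X_F$, but that tells you nothing. The map $\phi$ could perfectly well have positive-dimensional image with correspondingly smaller fibers; nothing you have said rules this out. (There is a secondary issue: for generic $\Lambda\in X_F$ you only know a priori that $\dim(\Lambda\cap F_b)\ge k-2$, since $\Lambda+F_b$ is isotropic of dimension at most $b+2$; getting $k-1$ already requires work, so $\phi$ is not obviously a rational map to $G(k-1,F_b)$.) In effect you are trying to prove, inside this lemma, the stronger set-theoretic statement that $X_F$ \emph{is} a Schubert variety; that conclusion is exactly what the paper obtains only \emph{after} this lemma, by applying Theorem~\ref{index} to the class $[X_F]$.

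The paper's route is shorter and sidesteps this: once $\dim X_F$ is known, it asserts $[X_F]=m\,\sigma_{1,\ldots,k-1,b+2}$ and then pins down $m=1$ by observing that through a general point of $Q_X$ there passes a unique $\Lambda\in X$ (since $[X_p]$ is the class of a point). Your multiplicity-one argument via a general point of $F$ is essentially the same computation. What you are missing is a replacement for the class identification step that does not rely on the unjustified constancy of $\phi$.
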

\begin{proof}
Consider the incidence correspondence
$$I:=\{(\Lambda,F_{b+2})|\Lambda\subset \mathbb{P}(F_{b+2})\subset Q_X, \Lambda\in X\}\subset X\times OG(b+2,n),$$
and let $\pi_1:I\rightarrow X$ and $\pi_2:I\rightarrow OG(b+2,n)$ be the two projections. Let $p\in \Lambda$ be a point that is not contained in the singular locus of $Q_X$. Then the orthogonal complement $p^\perp$ will cut out $Q_X$ into a union of $\mathbb{P}(F_{b+2})\cup \mathbb{P}(F'_{b+2})$ (belong to different component). Therefore a general fiber of $\pi_1$ has dimension $0$, and hence $$\dim(I)=\dim(X)=n-b-k-1.$$ 
Since the $k$-planes parametrized by $X$ sweep out the quadric $Q_X$, $\dim(\pi_2(I))\geq1$. Since the locus of all maximal isotropic subspaces contained in $Q_X$ has dimension $1$, we conclude that $$\dim(\pi_2(I))=1.$$ Therefore a general fiber of $\pi_2$ over the image has dimension $n-b-k-2$. $$\dim(X_F)=n-b-k-2=b-k+2.$$ By counting the dimension we must have $$[X_F]=m\sigma_{1,...,k-1,b+2}\in A(G(k,b+2)).$$ Since through a general point $p$ in $Q$, there is only one $k$-plane parametrized by $X$ (as the class $[X_p]=\sigma_{1,...,k}$ is the class of a point), we get $m=1$. Hence $$[X_F]=\sigma_{1,...,k-1,b+2}$$
\end{proof}

\begin{lemma}\label{lemma 5.8}
Let $\sigma_{a;b}$ be a Schubert class in $OG(k,n)$ with $a=(1,...,k-1)$. Let $X$ be a subvariety representing the class $\sigma_{a;b}$. Let $Q_X$ be the quadric swept out by the $k$-planes parametrized by $X$. Then the corank $r$ of the quadric $Q_X$ is at least $k-1$.
\begin{proof}
Notice that $b\geq k-1$ by the definition of a Schubert index (see Definition \ref{Schubert variety}). 

Suppose for a contradiction that $r<k-1$. Then $Q_X$ is contained in a smooth quadric of dimension $n-b+r$ and therefore $X$ can be viewed as a subvariety of $OG(k,n-b+r)$. Let $i:OG(k,n-b+r)\hookrightarrow OG(k,n)$ be the natural inclusion. Since $[i(X)]=\sigma_{1,...,k-1;b}$, $[X]=\sigma_{a';b'}\in A(OG(k,n-b+r))$ is a Schubert class. Since $Q_X$ has dimension $$n-b-2=n-b+r-b'-2,$$ we obtain $b'=r$. Compare the dimension of $\sigma_{a';b'}$ and $\sigma_{a;b}$, we must have $$a'=(1,...,r,r+2,...,k).$$ Apply Algorithm \ref{algorithm2}, we get $$[i(\Sigma_{a';r})]=\sigma_{a;b}+2\sigma_{1,...,k-1,b+1;}\neq \sigma_{a;b},$$ which is a contradiction.
\end{proof}

\end{lemma}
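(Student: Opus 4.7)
The plan is to argue by contradiction: assuming $r\leq k-2$, I will show that $X$ embeds into a strictly smaller orthogonal Grassmannian and that the push-forward of its class back into $A(OG(k,n))$ strictly exceeds $\sigma_a^b$. Preliminary to this, observe that $b\geq k-1$, since the Schubert-index constraint $a_i\neq b_j+1$ together with $a=(1,\dots,k-1)$ rules out $b\in\{0,1,\dots,k-2\}$.

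Under the contradiction hypothesis $r\leq k-2$, the corank of $Q_X$ is strictly smaller than the corank $b$ of the sub-quadric $Q_{n-b}^{b}=Q\cap\mathbb{P}(F_b^{\perp})$ which naturally contains it, so one can find a linear subspace $W\subset V$ of dimension $n-b+r$ on which $q$ restricts non-degenerately and which still contains every $k$-plane parametrized by $X$. Hence $X$ embeds into $OG(k,n-b+r)$. Next, one argues that $[X]$ in $A(OG(k,n-b+r))$ is itself a Schubert class $\sigma_{a'}^{b'}$: since the push-forward under the inclusion $i:OG(k,n-b+r)\hookrightarrow OG(k,n)$ equals the irreducible class $\sigma_a^b$, irreducibility of $X$ together with the fact that the image sweeps out a quadric of the same dimension forces this conclusion. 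The corank of $Q_X$ relative to the new ambient pins down $b'=r$, and matching the dimension of $\sigma_{a'}^{r}$ in $OG(k,n-b+r)$ against the dimension of $\sigma_a^b$ uniquely determines $a'=(1,2,\dots,r,r+2,\dots,k)$.

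The final step is an application of Algorithm \ref{algorithm2} from Section \ref{sec2} to the restriction variety associated with $i(\Sigma_{a'}^{r})$, which expands $[i(\Sigma_{a'}^{r})]\in A(OG(k,n))$ as a sum of Schubert classes. The expected output is $\sigma_a^b+2\sigma_{1,\dots,k-1,b+1}$, and since the extra summand is nonzero this contradicts $[X]=\sigma_a^b$. I expect the main obstacle to be the middle step — rigorously identifying $[X]$ as a Schubert class in the smaller orthogonal Grassmannian and pinning down $a'$ — since this requires combining the dimension count for $\sigma_a^b$ with the structural input that $b'=r$; once $a'$ is determined, the push-forward computation is a mechanical bookkeeping application of the algorithm recalled in the previous section.
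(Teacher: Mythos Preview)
Your proposal is correct and follows essentially the same argument as the paper: assume $r<k-1$, view $X$ inside $OG(k,n-b+r)$, identify $[X]$ there as $\sigma_{a'}^{r}$ with $a'=(1,\dots,r,r+2,\dots,k)$ via the corank and dimension constraints, and then push forward via Algorithm~\ref{algorithm2} to obtain $\sigma_a^b+2\sigma_{1,\dots,k-1,b+1}$, contradicting $[i(X)]=\sigma_a^b$. The step you flag as the main obstacle---showing $[X]$ is a single Schubert class in the smaller orthogonal Grassmannian---is treated just as briefly in the paper, so your level of detail matches.
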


\begin{proof}[Proof of Proposition \ref{1 is rigid}]
Let $X$ be a subvariety of $OG(k,n)$ representing the Schubert class $\sigma_{a;b}$. The $k$-planes parameterized by $X$ sweep out a quadric $Q_X$ of dimension $n-b-2$ \cite[Lemma 6.2]{Coskun2014RigidityOS}. We will show that there exists a unique isotropic subspace $F_{k-1}$ such that $F_{k-1}\subset\Lambda$ for all $\Lambda\in X$.

The proof will be done by induction on $n$ and $b$ using the ordering $(n,b)<(n',b')$ if $n<n'$ or $n=n'$ and $b>b'$.

\begin{itemize}
\item If $n=2k+2$ and $b=k$, let $i:OG(k,n)\hookrightarrow G(k,n)$ be the natural inclusion. Then
$$[i(X)]=\sigma_{1,...,k-1,k+1}\in A(G(k,n)).$$
By Theorem \ref{index}, the sub-index $(k-1)$ is rigid.
\item If $n=2k+2$ and $b=k-1$, then the quadric $Q_X$ has dimension $k+1$. By Lemma \ref{lemma 5.8}, the corank $r= k-1$. We claim that the singular locus of $Q_X$ is the desired linear space of projective dimension $k-2$ that is contained in every $k$-plane parametrized by $X$.

Let $\mathbb{P}(F_{k-1})$ be the singular locus of $Q_X$. Notice that the maximal linear subspaces contained in $Q_X$ have projective dimension $k$. Let $F$ be a maximal isotropic subspace of dimension $k+1$ that is contained in $Q_X$ and contains at least one $k$-plane parametrized $X$. Let $X_F$ be the locus of $k$-planes parametrized by $X$ that are contained in $F$. Then by Lemma \ref{lemma 5.7} $$[X_F]=\sigma_{1,2,...k-1,k+1}\in A(G(k,k+1)).$$ By Theorem \ref{index}, the subindex $(k-1)$ is rigid and therefore there exists a unique linear space $V_{k-1}^F$ of dimension $k-1$ which is contained in every $k$-plane parametrized by $X_F$. We claim that $V_{k-1}^F$  is the singular locus $F_{k-1}$. 

Suppose for a contradiction that $V_{k-1}^F\neq F_{k-1}$. Let $p$ be a general point in $F$ which is not contained in $V_{k-1}^F$ and $F_{k-1}$. Let $W$ be the span of $p$ and $F_{k-1}$. Let $F'$ be the other $(k+1)$-plane contained in the orthogonal complement of $W$ which belongs to different connected component than $F$. Notice that then there are at least two $k$-planes parametrized by $X$ and containing $p$, namely the span of $p$ and $V_{k-1}^F$ or $V_{k-1}^{F'}$. It contradicts the fact that through a general point there is only one $k$-plane parametrized by $X$. We conclude that $V_{k-1}^F=F_{k-1}$. Since every $k$-plane parametrized by $X$ is contained in some maximal isotropic subspace, it must contain $F_{k-1}$. Therefore $(k-1)$ is rigid.

\item If $n>2k+2$ is odd and $b=[\frac{n}{2}]-1=\frac{n-3}{2}$, we will use induction on $n$ and Lemma \ref{5.5}. By Lemma \ref{lemma 5.8}, the quadric $Q_X$ has corank $k-1\leq r\leq b=\frac{n-3}{2}$. There are two possibilities:
\begin{enumerate}
\item If $k-1\leq r<b$, then $X$ can be viewed as a subvariety of $OG(k,n-b+r)$ where $n-b+r<n$. Let $i:OG(k,n-b+r)\hookrightarrow OG(k,n)$ be the natural inclusion. Since $[i(X)]=\sigma_{1,...,k-1}^{b}$, $[X]=\sigma_{a'}^{b'}\in A(OG(k,n-b+r))$ is a Schubert class. Since $Q_X$ has dimension $$n-b-2=n-b+r-b'-2,$$ we obtain $b'=r$. Compare the dimension of $\sigma_{a';b'}$ and $\sigma_{a;b}$, we must have $$a'=(1,...,k-1).$$ By inducsion on $n$, $a_{k-1}=k-1$ is rigid. 

\item Now suppose the corank $r=b$. Let $Z=\mathbb{P}(F_b)$ be the singular locus of $Q_X$. Notice that a maximal isotropic subspace contained in $Q_X$ has a vector space dimension $b+1$ and $F_b$ is contained in all maximal isotropic subspaces. Then every $k$-plane $\Lambda$ parametrized by $X$ must meet $F_b$ in dimension at least $k-1$ since the span of $\Lambda$ and $F_b$ is contained in a maximal isotropic subspace. Let $Y$ be the subvariety defined in Lemma \ref{5.5}. Then $$\dim(Y)\leq\dim(Z)= b-1$$ and 
by Lemma \ref{5.5}, $\dim(Y)=k-2$. Since $Y$ is irreducible, $Y$ has to be a linear space. Let $Y=\mathbb{P}(F_{k-1})$, then $F_{k-1}$ is the desired vector subspace that is contained in every $k$-planes parametrized by $X$. 
\end{enumerate}

\item If $n>2k+2$ is even and $b=\frac{n}{2}-1$, under the natural inclusion $i:OG(k,n)\hookrightarrow G(k,n)$, 
$$[i(X)]=\sigma_{1,...,k-1,b+1}\in A(G(k,n)).$$ By Theorem \ref{index}, $a_{k-1}=k-1$ is rigid. 

\item If $n>2k+2$ is even and $b=\frac{n}{2}-2$, we will use a similar approach as in Example \ref{example1}. The quadric $Q_X$ has corank $r\leq \frac{n}{2}-2$. 

If $r<b=\frac{n}{2}-2$, then a similar argument as above shows that $a_{k-1}=k-1$ has to be rigid. 

Now assume the $r=b$. Let $Z=\mathbb{P}(F_b)$ be the singular locus of $Q_X$. We claim that every $k$-plane parametrized by $X$ must meet $Z$ in projective dimension at least $k-2$ and then we conclude $a_{k-1}=k-1$ is rigid by Lemma \ref{5.5}. 

To prove the claim, let $\Lambda$ be a general point in $X$ and let $F$ be a maximal isotropic subspace in $Q_X$ containing $\Lambda$. Let $X_F$ be the locus of $k$-planes parametrized by $X$ that are contained in $F$. Then by Lemma \ref{lemma 5.7} $$[X_F]=\sigma_{1,...,k-1,b+2}\in A(G(k,b+2)).$$ The sub-index $(k-1)$ is rigid by Theorem \ref{index}. Therefore there is a unique $V^F_{k-1}$ that is contained in all $k$-planes parametrized by $X_F$. Meanwhile it also implies $V^F_{k-1}$ is contained in the singular locus $Z$ since otherwise through a general point in $F$ there are at least two $k$-planes parametrized by $X$, which is impossible. As we vary $F$, and since every $k$-plane parametrized by $X$ is contained in some maximal isotropic subspace, we get every $\Lambda\in X$ must meet $Z$ in projective dimension at least $k-2$. This complete the proof of the claim.

\item Finally, if $n\geq 2k+2$ and $b<[\frac{n-1}{2}]-1$, we will use induction on $b$. Consider the incidence correspondence
$$I:=\{(\Lambda,H)|\Lambda\in X,\Lambda\subset H,H\in(\mathbb{P}^{n-1})^*\}.$$
Let $\pi_1:I\rightarrow X$ and $\pi_2\rightarrow(\mathbb{P}^{n-1})^*$ be the two projections. Since $X$ is irreducible and the fibers of $\pi_1$ are projective linear spaces of dimension $n-k-1$, $I$ is irreducible of dimension $2n-2k-b-2$, and its image $\pi_2(I)$ in $(\mathbb{P}^{n-1})^*$ is also irreducible. Since a general fiber of $\pi_2$ over the image has cohomology class
$$[\pi^{-1}_2(H)]=\sigma_{1,...,k-1}^{b+1},$$
we get $$\dim(\pi_2(I))=2n-2k-b-2-(n-k-b-2)=n-k.$$ Hence $$[\pi_2(I)]=m\sigma_{1,...,k-1,k+1,...,n}\in A(G(n-1,n).$$ We claim that $m=1$. Let $F_{n-k}$ be a general linear spacce of dimension $n-k$ and let $$\Sigma=\Sigma_{n-k,n-k+2,...,n}=\{\Lambda\in G(k,n)|\dim(\Lambda\cap F_{n-k})\geq 1\}$$ be the Schubert variety in $G(k,n)$ defined by $F_{n-k}$. Then 
$$[X\cap\Sigma]=\sigma_{1,...,k-1;b+1}\in A(OG(k,n)).$$
By induction on $b$, there is a unique linear space $F'_{k-1}$ that is contained in every $\Lambda\in X\cap \Sigma$, and hence the span of $F'_{k-1}$ and $F_{n-k}$ is the unique hyperplane that contains $\mathbb{P}(F_{n-k})$ and is contained in $\pi_2(I)$. Therefore $m=1$. By Theorem \ref{index}, there is a unique linear subspace $F_{k-1}$ such that $$\pi_2(I)=\{H\in(\mathbb{P}^{n-1})^*|F_{k-1}\subset H\}.$$
Now clearly $F_{k-1}$ is contained in every $k$-plane parametrized by $X$, since otherwise there exists a hyperplane in $\pi_2(I)$ that does not contain $F_{k-1}$. 
\end{itemize}

\end{proof}

\begin{remark}
When $n=2k+1$, let $X$ be the restriction variety defined by
$$F_1\subset...\subset F_{k-2}\subset F_k\subset Q_{k+2}^{k-2}.$$
Then $[X]=\sigma_{1,...,k-1;k-1}$. This gives a counterexample that $a_{k-1}=k-1$ is not rigid when $n=2k+1$.
\end{remark}

\begin{corollary}\label{1 is rigid 2}
Let $\sigma_{a;b}$ be a Schubert class in $OG(k,n)$, where $a=(1,...,t,t+2,...,k)$ and $b=t$, $1\leq t\leq k-2$. Then the sub-index $a_t=t$ is rigid when $n\geq 2k+2$.
\end{corollary}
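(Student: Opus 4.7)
The plan is to apply Theorem~\ref{index} to $X$ viewed inside $G(k,n)$ via the natural inclusion $i\colon OG(k,n)\hookrightarrow G(k,n)$, with a fallback to the geometric arguments of Example~\ref{example1} and Proposition~\ref{1 is rigid} in the harder cases. Let $X\subset OG(k,n)$ be any subvariety representing $\sigma_a^b$ with $a=(1,\ldots,t,t+2,\ldots,k)$ and $b=t$. The key observation is that in the associated Schubert index $(1,\ldots,t,t+2,\ldots,k,n-t)$ of $i(X)$ in $G(k,n)$, the sub-index $a_t=t$ is essential (since $a_{t+1}=t+2\neq t+1$) and satisfies condition (2) of Theorem~\ref{index} ($a_t=t$).

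First I would compute $[i(X)]\in A(G(k,n))$ using Algorithm~\ref{algorithm2} together with Remark~\ref{classing}. When $[i(X)]$ equals the single Schubert class $\sigma_{1,\ldots,t,t+2,\ldots,k,n-t}$ with coefficient $1$, Theorem~\ref{index} produces a unique $t$-dimensional subspace $F_t\subset V$ with $F_t\subset\Lambda$ for every $\Lambda\in X$. Since every $\Lambda\in X$ is isotropic, $F_t$ is automatically isotropic, and its uniqueness in $G(k,n)$ implies uniqueness among isotropic subspaces, establishing rigidity of $a_t=t$.

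For the remaining cases, where $[i(X)]$ is a sum of Schubert classes or carries coefficient $>1$ (as in the $k=2$, $t=1$ instance treated in Example~\ref{example1}), I would adapt the approach of Example~\ref{example1} and Proposition~\ref{1 is rigid}. The first step is to apply Proposition~\ref{contained in plane} to $i(X)\subset G(k,n)$, producing a fixed $(n-t)$-dimensional subspace $F_{n-t}$ containing every $\Lambda\in X$. Set $F_t$ to be the radical of $q|_{F_{n-t}}$; to show $\dim F_t=t$, combine an upper bound on the corank (analogous to the structural bound from \cite[Lemma~6.2]{Coskun2014RigidityOS} invoked in Example~\ref{example1}) with a lower bound via a degeneration argument: if $\dim F_t<t$, then $X$ would sit inside a smaller orthogonal Grassmannian $OG(k,n')$ where its Schubert decomposition cannot match $\sigma_a^b$. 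Finally, verify $F_t\subset\Lambda$ for all $\Lambda$ via an incidence correspondence with the family of maximal isotropic subspaces $F$ of $F_{n-t}$: each such $F$ contains $F_t$ by definition of the radical, and the restricted family $X_F:=\{\Lambda\in X:\Lambda\subset F\}$ has class in $G(k,F)$ computable as in Lemma~\ref{lemma 5.7}, allowing Theorem~\ref{index} (or Proposition~\ref{1 is rigid}) to supply a fixed subspace contained in every $\Lambda\in X_F$, which then coincides with $F_t$ as $F$ varies.

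The main obstacle lies in the case analysis for $[i(X)]$: when Theorem~\ref{index} does not apply directly, showing $\dim F_t=t$ exactly and that the restricted classes $[X_F]$ take the form required by Theorem~\ref{index} or Proposition~\ref{1 is rigid} calls for delicate cohomology-class comparisons and incidence arguments modeled on the proof of Proposition~\ref{1 is rigid}.
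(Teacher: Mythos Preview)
Your primary approach cannot work as stated. The class $[i(X)]$ is \emph{never} a single Schubert class with coefficient $1$: with $a=(1,\ldots,t,t+2,\ldots,k)$, $b=t$, and $t\le k-2$, Algorithm~\ref{algorithm2} gives
\[
[i(X)]=2\sum_{\ell=1}^{k-t}\sigma_{1,\ldots,k-\ell,k-\ell+2,\ldots,k,\,n-t-\ell}\in A(G(k,n)),
\]
a sum of $k-t\ge 2$ distinct classes, each with coefficient $2$. Theorem~\ref{index} is stated for an irreducible $X$ whose class is a single $\sigma_a$, so it does not apply to $i(X)$; nothing in your outline explains how to extract a common $F_t$ from a variety whose class is a nontrivial linear combination.

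Your fallback is a different route from the paper's and, as written, has a real gap. You propose to take $F_t$ as the radical of $q|_{F_{n-t}}$ and then argue the corank of $Q_X$ is exactly $t$. But the only tool in the paper that pins down the corank in this situation is Proposition~\ref{quadric rigd}, which is proved \emph{after} this corollary and in fact uses Corollary~\ref{rigid k-1}, which in turn relies on the present result; so invoking it here would be circular. A Lemma~\ref{lemma 5.8}--style argument might be adaptable, but the Schubert index now has a gap at $t$ rather than at $k-1$, and you would have to redo the restriction-variety computation to get the contradiction; this is not addressed. The subsequent incidence with maximal isotropic $F\subset F_{n-t}$ is also not parallel to Lemma~\ref{lemma 5.7}: there the quadric has dimension $b+2$ with small corank, whereas here $Q_X$ has dimension $n-t-2\ge k$ and the maximal isotropic pieces are much larger, so the class $[X_F]$ would need a fresh computation.

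The paper's proof avoids all of this by working dually. It sets up the incidence $I=\{(\Lambda,H):\Lambda\subset H\}\subset X\times(\PP^{n-1})^*$, shows that $\pi_2(I)\subset G(n-1,n)$ has class $m\,\sigma_{1,\ldots,t,t+2,\ldots,n}$, and proves $m=1$ by intersecting $X$ with a general $\Sigma_{n-k,\ldots,n-t-1,n-t+1,\ldots,n}$ to obtain a variety of class $\sigma_{1,\ldots,k-1}^{k}$, to which Proposition~\ref{1 is rigid} applies directly. Then Theorem~\ref{index} (applied in $G(n-1,n)$, where the class \emph{is} a single Schubert class) yields the unique $F_t$ with $\pi_2(I)=\{H:F_t\subset H\}$, and hence $F_t\subset\Lambda$ for all $\Lambda\in X$. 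The essential idea you are missing is this passage to the dual Grassmannian $G(n-1,n)$, which converts the problematic sum-of-classes in $G(k,n)$ into a single class to which Theorem~\ref{index} genuinely applies.
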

\begin{proof}
Let $X$ be a subvariety of $OG(k,n)$ representing the Schubert class $\sigma_{a;b}$. We will use a similar argument as in the proof of the last case of Proposition \ref{1 is rigid}.

Consider the incidence correspondence
$$I:=\{(\Lambda,H)|\Lambda\in X,\Lambda\subset H,H\in(\mathbb{P}^{n-1})^*\}.$$
Let $\pi_1:I\rightarrow X$ and $\pi_2\rightarrow(\mathbb{P}^{n-1})^*$ be the two projections. Since $X$ is irreducible of dimension $n-t-k-1$ and the fibers of $\pi_1$ are projective linear spaces of dimension $n-k-1$, $I$ is irreducible of dimension $2n-2k-t-2$ and its image $\pi_2(I)$ in $(\mathbb{P}^{n-1})^*$ is also irreducible. Let $i:OG(k,n)\rightarrow G(k,n)$ be the natural inclusion. Then $$[i(X)]=2\sum\limits_{i=1}^{k-t}\sigma_{1,...,k-i,k-i+2,...,k,n-t-i},$$ and hence for a general $H\in \pi_2(I)$,
$$[i(\pi^{-1}_2(H))]=2\sigma_{1,...,k-1,n-k-1},$$
which has dimension $n-2k-1$. Therefore $\dim(\pi_2(I))=2n-2k-t-2-(n-2k-1)=n-t-1$. Hence $$[\pi_2(I)]=m\sigma_{1,...,t,t+2,...,n}\in A(G(n-1,n).$$ We claim that $m=1$. Let $F_{n-t-1}$ be a general linear spacce of dimension $n-t-1$ and let $$\Sigma=\Sigma_{n-k,...,n-t-1,n-t+1,...,n}=\{\Lambda\in G(k,n)|\dim(\Lambda\cap F_{n-t-1})\geq k-t\}$$ be the Schubert variety in $G(k,n)$ defined by $F_{n-t-1}$. Then 
$$[X\cap\Sigma]=\sigma_{1,...,k-1;k}\in A(OG(k,n)).$$
By Proposition \ref{1 is rigid}, there is a unique $F_{k-1}$ that is contained in every $\Lambda\in X\cap \Sigma$, and hence the span of $F_{k-1}$ and $F_{n-t-1}$ (they have a $(k-t-1)$-dimensional intersection by the construction) is the unique hyperplane that contains $\mathbb{P}(F_{n-t-1})$ and is contained in $\pi_2(I)$. Therefore $m=1$. By Theorem \ref{index}, there is a unique linear subspace $F_{t}$ such that $$\pi_2(I)=\{H\in(\mathbb{P}^{n-1})^*|F_{t}\subset H\}.$$
Now clearly $F_{t}$ is contained in every $k$-plane parametrized by $X$, since otherwise there exists a hyperplane in $\pi_2(I)$ that does not contain $F_{t}$. 
\end{proof}

\begin{remark}
We will show it later that Corollary \ref{1 is rigid 2} is also true when $n=2k$. However, it would be false when $n=2k+1$. For example, the restriction variety defined by $$F_2\subset F_3\subset Q_6^0$$ has cohomology class $\sigma_{1,3;1}\in OG(3,7)$. 
\end{remark}

\begin{corollary}\label{rigid k-1}
Assume $k\geq 2$. Let $\sigma_{a;b}$ be a Schubert class in $OG(k,n)$ where $a=(1,...,t,t+2,...,k)$ and $b=t$, $0\leq t\leq k-2$. Then the sub-index $a_{k-1}=k$ is rigid.
\end{corollary}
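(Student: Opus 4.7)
The plan is to proceed in two steps: first reduce the case $t \geq 1$ to the base case $t = 0$ via Corollary~\ref{1 is rigid 2}, and then handle $t = 0$ with a hyperplane-incidence argument closely parallel to the proof of Corollary~\ref{1 is rigid 2}.

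For $t \geq 1$ (where $n \geq 2k+2$ so that Corollary~\ref{1 is rigid 2} applies): Corollary~\ref{1 is rigid 2} produces a unique isotropic $F_t \subset V$ with $F_t \subset \Lambda$ for every $\Lambda \in X$. Passing to the quotient $\bar V = F_t^\perp / F_t$ with its induced non-degenerate symmetric bilinear form, the image $\bar X \subset OG(k-t,n-2t)$ represents the Schubert class $\sigma_{(2,3,\ldots,k-t)}^0$. Any unique isotropic $(k-t)$-dimensional $\bar F_{k-t} \subset \bar V$ satisfying $\dim(\bar\Lambda \cap \bar F_{k-t}) \geq k-t-1$ for every $\bar\Lambda \in \bar X$ pulls back to the preimage $F_k \subset F_t^\perp$, which is isotropic of dimension $k$, contains $F_t$, and satisfies
\[
\dim(\Lambda \cap F_k) = t + \dim(\bar\Lambda \cap \bar F_{k-t}) \geq t + (k-t-1) = k-1
\]
since $F_t \subset \Lambda \cap F_k$. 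The uniqueness of $F_k$ follows from the uniqueness of $\bar F_{k-t}$. Thus the general case reduces to the $t=0$ case in $OG(k-t, n-2t)$, which is the $t=0$ case of Corollary~\ref{rigid k-1} after relabeling.

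For the base case $t = 0$, let $X \subset OG(k,n)$ represent $\sigma_{(2,\ldots,k)}^0$. Form the incidence correspondence
\[
I := \{(\Lambda,H) \in X \times (\mathbb{P}^{n-1})^* : \Lambda \subset H\},
\]
with projections $\pi_1, \pi_2$. Using Remark~\ref{classing} and Algorithm~\ref{algorithm2} to expand $[i(X)] \in A(G(k,n))$ in Schubert classes, compute the cohomology class of a generic fiber $\pi_2^{-1}(H) \subset OG(k,H)$, from which $\dim \pi_2(I)$ follows. The expected outcome is $[\pi_2(I)] = m \sigma_{(1,2,\ldots,k+1,k+3,\ldots,n)} \in A(G(n-1,n))$, and we verify $m=1$ by intersecting $X$ with a carefully chosen Schubert variety so that the intersection admits Proposition~\ref{1 is rigid} (giving the uniqueness of an auxiliary fixed subspace). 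Theorem~\ref{index} applied to $\pi_2(I)$—its only essential sub-index being $a_{k+1}=k+1$—then extracts a unique $(k+1)$-dimensional linear subspace $F_{k+1} \subset V$ with the property that every hyperplane in $\pi_2(I)$ contains $F_{k+1}$; equivalently, $\Lambda \subset F_{k+1}$ for every $\Lambda \in X$.

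Within $\mathbb{P}(F_{k+1})$, the subquadric $Q \cap \mathbb{P}(F_{k+1})$ contains every projective $(k-1)$-plane $\mathbb{P}(\Lambda)$. A corank analysis analogous to that used in the proof of Proposition~\ref{1 is rigid} and Example~\ref{example1} identifies a unique isotropic $F_k \subset F_{k+1}$: if $F_{k+1}$ is itself isotropic, we extract $F_k$ by applying Proposition~\ref{1 is rigid} to the image $\bar X \subset OG(k, F_{k+1})$, while if $F_{k+1}$ is non-isotropic the restricted quadric has its singular locus equal to $\mathbb{P}(F_k)$. In either case, every $\Lambda \in X$ meets $F_k$ in dimension at least $k-1$, and uniqueness follows from the uniqueness of $F_{k+1}$ together with that of the corank structure. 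The main obstacle is identifying $[\pi_2(I)]$ precisely and verifying the multiplicity $m=1$, after which the corank analysis of $Q \cap F_{k+1}$ follows the patterns developed in Proposition~\ref{1 is rigid}.
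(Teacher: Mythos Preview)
Your reduction from $t\geq 1$ to $t=0$ via Corollary~\ref{1 is rigid 2} is fine when $n\geq 2k+2$, and this is indeed how the paper begins in that range. However, the base case $t=0$ as you outline it does not work. You assert that the hyperplane incidence $\pi_2(I)$ has class $m\,\sigma_{(1,\ldots,k+1,k+3,\ldots,n)}$ and conclude that there is a fixed $(k+1)$-dimensional $F_{k+1}$ with $\Lambda\subset F_{k+1}$ for every $\Lambda\in X$. But for $[X]=\sigma_{(2,\ldots,k)}^{0}$ the sub-index $b_1=0$ forces the $k$-planes parametrized by $X$ to sweep out the entire quadric $Q$ of projective dimension $n-2$ (cf.\ \cite[Lemma 6.2]{Coskun2014RigidityOS}); already for the Schubert variety itself the $\Lambda$'s are not contained in any proper linear subspace, so no such $F_{k+1}$ can exist. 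Consequently $\pi_2(I)$ does not have the class you predict, and the subsequent corank analysis of $Q\cap\mathbb{P}(F_{k+1})$ never gets off the ground.

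The paper's argument for $t=0$ is genuinely more delicate and splits according to $n$. For $n>2k+2$ it takes a \emph{general} hyperplane $H$, observes $[X_H]=\sigma_{(1,\ldots,k-1)}^{k}$, invokes Proposition~\ref{1 is rigid} to obtain $F_{k-1}^H$, and then studies the variety swept out by the $\mathbb{P}(F_{k-1}^H)$ as $H$ varies, showing it is a $\mathbb{P}^{k-1}$. For $n=2k+2$ it uses an incidence with \emph{maximal isotropic} $(k+1)$-planes rather than hyperplanes. Your proposal also omits the low cases $n=2k$ and $n=2k+1$ altogether: Corollary~\ref{1 is rigid 2} is unavailable there (indeed it is false for $n=2k+1$ per the remark following it), and the paper handles these separately, invoking the multi-rigidity result of Robles--The for $n=2k$ and an induction on $k$ together with a variety-of-minimal-degree argument for $n=2k+1$.
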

\begin{proof}
Let $X$ be a subvariety representing $\sigma_{a;b}$ in $OG(k,n)$. Let $Q_X$ be the quadric swept out by the $k$-planes parametrized by $X$. Let $r$ be the corank of $Q_X$. We will consider the cases when $n=2k,2k+1,2k+2$ and $n\geq 2k+3$ separately. 
\begin{itemize}
\item If $n=2k$, then Algorithm \ref{algorithm2} implies $X$ can not be contained in a smooth quadric smaller than $Q$. Therefore $r=t$. Let $\mathbb{P}(F_t)$ be the singular locus of $Q_X$. Notice that $F_t$ is contained in every maximal isotropic subspace that is contained in $Q_X$, and under the projection from $F_t$, we reduce to the case when $t=0$. Robles and The prove that $\sigma_{2,...,k;0}$ is multi-rigid \cite{RT}. In particular, $\sigma_{a;b}$ is rigid.

\item If $n=2k+1$, we first reduce to the case when $t=0$, then use induction on $k$. Let $r\leq t$ be the corank of $Q_X$. If $r<t$, then $X$ can be viewed as a subvariety in $OG(k,2k)$ with cohomology class $\sigma_{1,...,t-1,t+1,...,k;t-1}$, which is rigid by the previous case. If $r=t$, let $F_t$ be the singular locus of $Q_X$. Then $F_t$ has to be contained in every maximal isotropic subspace in $Q_X$. Under the projection from $F_t$, we reduce to the case when $t=0$. 

If $k=2$ and $n=5$, then the orthogonal Grassmannian $OG(2,5)$ is isomorphic to the projective space $\mathbb{P}^3$ and every representative of $\sigma_{2}^0\in A(OG(2,5))$ is isotropic to $\mathbb{P}^2$. Therefore the Schubert class $\sigma_{2;0}$ is rigid.

If $k=3$ and $n=7$, observe that the Schubert variety $\Sigma=\Sigma_{2,3;0}(F_3)$ is the cone over the Veronese surface in the minimal embedding of $OG(3,7)$. Let $W_2$ be a $2$-dimensional subspace of $F_3$. Then the line consisting of $3$-planes containing $W_2$ and contained in $W_2^\perp$ is contained in $\Sigma$. Therefore the Schubert variety $\Sigma$ is a cone with the vertex $F_3$. Let $\Sigma_{3;1,0}$ be a general codimension $1$ Schubert variety in $OG(3,7)$ defined with respect to a 3-dimensional linear space $G_3$. Let $Z$ be the intersection of $\Sigma$ and $\Sigma_{3;1,0}$. Then every point $p$ in $\mathbb{P}(G_3)\cong \mathbb{P}^2$ corresponds to a unique line $p^\perp\cap F_3$ contained in $Z$. Therefore $Z$ is isotropic to the Veronese surface and in the minimal embedding of $OG(3,7)$, $\Sigma$ is embedded as a cone over the Veronese surface. We get $\Sigma$, and hence $X$, are varieties of minimal degree. $X$ cannot be a rational normal scroll since the intersection $X\cap \Sigma_{3;1,0}$ contains no lines. Therefore $X$ is a cone over the Veronese surface. Let $V$ be the vertex of $X$. Then the lines in $OG(3,7)$ that contain $V$ sweep out the Schubert variety $\Sigma_{2,3;0}(V)$. This implies $X$ is contained in the Schubert variety $\Sigma_{2,3;0}(V)$ and hence $X$ has to be the Schubert variety.

Now assume the statement is true for $k'<k$. Let $Y$ be a general hyperplane section of $X$. Then $Y$ has cohomology class $\sigma_{1,3,...,k}^1$. Let $Q_Y$ be the corresponding quadric and $r$ its corank. If $r=0$, then by considering $Y$ as a subvariety in $OG(k,2k)$, we get the sub-index $a_{k-1}$ is rigid for $Y$. If $r=1$, let $q$ be the singular point of $Q_Y$. Let $Y'$ be the resulting variety of $Y$ under the projection from $q$. Then $\bar{Y}$ is a subvariety of $OG(k-1,2k-1)$ with cohomology class $\sigma_{2,...,k-1;0}$. By induction, we conclude that the sub-index $a_{k-1}$ is rigid for a general hyperplane section of $X$. 

Let $Y'$ be another hyperplane section of $X$. Let $F_k$ and $F'_k$ be the corresponding linear spaces. We need to show $F_k=F'_k$. Let $Z$ be the intersection of $Y$ and $Y'$. Then $Z$ has cohomology class $\sigma_{1,2,4,...,k;2}$ in $OG(k,n)$. By a similar argument, we conclude $a_{k-1}$ is rigid for $Z$ by reducing to a multi-rigid class when the corank is less than $2$ and by induction when the corank equals $2$. Therefore $F_k$ is independent of the choice of hyperplane sections and thus the sub-index $a_{k-1}$ is rigid for $X$.
\end{itemize}
Now assume $n\geq 2k+2$. By Corollary \ref{1 is rigid 2}, the sub-index $a_t=t$ is rigid. Under the projection from the unique linear space $F_t$, we reduce to the case when $t=0$. 

\begin{itemize}
\item If $n=2k+2$, consider the following incidence correspondence 
$$I:=\{(\Lambda,F_{k+1})|\Lambda\subset F_{k+1},\Lambda\in X, F_{k+1}\text{ isotropic}\}\subset X\times OG(k+1,2k+2).$$
Let $\pi_1:I\rightarrow X$ and $\pi_2:I\rightarrow OG(k+1,2k+2)$ be the two projections. Let $Y$ be the image of $I$ under $\pi_2$. Notice that the fibers of $\pi_1$ have dimension $0$, and therefore $$\dim(Y)\leq \dim(I)=\dim(X)=k+1.$$ Since the $k$-planes parametrized by $X$ sweep out the quadric $Q$, the $(k+1)$-planes parametrized by $Y$ also sweep out $Q$ and therefore $\dim(Y)\geq \dim(\Sigma_{2,...,k+1;0})=k$.

If $\dim(Y)=k+1$, then $[Y]=m\sigma_{2,...,k-1,k+1;k-1,0}$. Let $F_k$ be a general isotropic subspace of dimension $k$. Then there exists a $(k+1)$-plane parametrized by $Y$ that meets the orthogonal complement of $F_k$ in a 3-dimensional subspace, and therefore there exists a $k$-plane parametrized by $X$ that meets the orthogonal complement of $F_k$ in a $2$-dimensional subspace. This is a contradiction since $[X]\cdot\sigma_{k;k,k-2,...,1}=0$. We conclude that $\dim(Y)=k$ and a general fiber of $\pi_2$ has dimension $1$.

Let $F$ be a general point in $Y$. Set $X_F:=\{\Lambda\in X|\Lambda\subset F\}$. Then $[X_F]=m\sigma_{1,...,k-1.k+1}\in A(G(k,F))$. Since through a general point in $Q$, there is only one $k$-plane parametrized by $X$, we get $m=1$. By Theorem \ref{index}, there is a unique linear space $L^F_{k-1}$ that is contained in every $k$-plane parametrized by $X_F$.

Let $Z$ be an isotropic subspace or the orthogonal complement of an isotropic subspace such that $\dim(Z\cap \Lambda)\geq k-1$ for all $\Lambda\in X$. Assume $Z$ is of the minimal dimension among all such subspaces. Then for a general $(k-1)$-dimensional isotropic subspace $V_{k-1}$ contained in $Z$, there exists $\Lambda\in X$ that containes $V_{k-1}$. Let $F$ be one component of the intersection of $Q$ and the orthogonal complement of $V_{k-1}$. Notice that $L^F_{k-1}=V_{k-1}$, and hence there is at least one dimensional family of $k$-planes parametrized by $X$ that contains $V_{k-1}$. Therefore there is at most $k$-dimensional family of $(k-1)$-planes contained in $Z$. Hence $Z$ is an isotropic subspace of dimension $k$ such that $\dim(Z\cap \Lambda)\geq k-1$ for all $\Lambda\in X$.

For the uniqueness, suppose $Z'$ is another isotropic subspace of dimension $k$ such that $\dim(Z\cap \Lambda)\geq k-1$ for all $\Lambda\in X$. Then every $k$-plane parametrized by $X$ must be contained in the span of $Z$ and $Z'$ which has projective dimension at most $2k-1$. This contradicts the fact that the $k$-planes parametrized by $X$ sweep out the quadric $Q$ which is of dimension $2k$.

\item Now assume $n>2k+2$. For a general hyperplane $H$, let $$X_H:=\{\Lambda\in X|\Lambda\subset H\}.$$ Then $$[X_H]=\sigma_{1,...,k-1;k}\in A(OG(k,n)).$$ By Proposition \ref{1 is rigid}, the sub-index $(k-1)$ is rigid. Let $F_{k-1}^H$ be the unique $(k-1)$-dimensional linear space that is contained in every $k$-plane parametrized by $X_H$. Let $Y$ be the variety swept out by $\mathbb{P}(F_{k-1}^H)$ as we vary $H$. We claim that the variety $Y$ is a projective linear space of dimension $k-1$.

Suppose for a contradiction that $\dim(Y)\geq k$, then for a general linear space $G_k$, the variety $Y$ will meet $\mathbb{P}(G_k^\perp)$, i.e. we can find a hyperplane $H$ such that $$\dim(F_{k-1}^H\cap G_k^\perp)\geq 1.$$ Let $Q_H$ be the quadric swept out by the $k$-planes parametrized by $X_H$, then by \cite{Coskun2014RigidityOS} Lemma 6.2, $\dim(Q_H)=n-k-2$ and therefore 
$$\dim(Q_H\cap \mathbb{P}(G^\perp_k))\geq 1.$$ 
Let $p_1$ be a point contained in $\mathbb{P}(F^H_{k-1})\cap G_k^\perp$ and $p_2$ be a point different from $p_1$ and contained in $Q_H\cap \mathbb{P}(G^\perp_k)$. Then there exists a $k$-plane $\Lambda\in X_H$ that contains both $p_1$ and $p_2$. We obtain $$\dim(\Lambda\cap G^\perp_k)\geq 2,$$which is a contradiction since $\sigma_{a;b}\cdot \sigma_{;k+1,k,k-2,...,1}=0$. Therefore $\dim(Y)=k-1$.

If $\dim(Y)=k-1\geq 2$, then by Bertini's theorem, a general hyperplane section of $Y$ is irreducible. Since $\mathbb{P}(F_{k-1}^H)\subset Y\cap H$, we must have $\mathbb{P}(F_{k-1}^H)=Y\cap H$, and therefore $Z$ is linear. 

If $\dim(Y)=1$ and $k=2$, suppose for a contradiction that $\deg(Y)=d\geq 2$. Let $H$ be a general hyperplane, then $H$ will meet $Y$ in $d$ projective points $p_1,...,p_d$. Assume that $p_1=\mathbb{P}(F_1^H)$ is the point defined by $H$. Let $H'\neq H$ be another hyperplane that defines $p_2=\mathbb{P}(F_1^{H'})$ and let $X_{H\cap H'}$ be the locus of $k$-planes parametrized by $X$ that are contained in $H\cap H'$. Then
$$[X_H]=\sigma_1^2$$
and therefore $$\dim(X_{H\cap H'})\geq 1.$$
Since every line contained in $X_{H\cap H'}$ must contain both $p_1$ and $p_2$, we reach a contradiction. We conclude that $\deg(Y)=1$. Let $Y=\mathbb{P}(F_k)$. Then $F_k$ is the desired linear space that meets every $k$-plane parametrized by $X$ in dimension at least $k-1$. 

\end{itemize}
\end{proof}

Now we can prove that the essential sub-index $a_t=t$ is always rigid.
\begin{proposition}
Assume $n\geq 2k+2$. Let $\sigma_{a;b}$ be a Schubert class in $OG(k,n)$. Suppose that the sub-index $a_t=t$ is essential (i.e. $a_{t+1}\neq t+1$), then $a_t$ is rigid.
\end{proposition}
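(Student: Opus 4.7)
The plan is to mimic the proof of Corollary \ref{1 is rigid 2}, adapting it to an arbitrary Schubert class $\sigma_a^b$ with $a_t = t$ essential. Let $X$ represent $\sigma_a^b$ in $OG(k,n)$ and set up the incidence correspondence
$$I := \{(\Lambda, H) \mid \Lambda \in X,\ \Lambda \subset H\} \subset X \times (\mathbb{P}^{n-1})^*$$
with projections $\pi_1, \pi_2$. Since the fibers of $\pi_1$ are projective linear spaces of dimension $n-k-1$, the variety $I$ is irreducible of dimension $\dim X + n - k - 1$, and $\pi_2(I)$ is irreducible in $(\mathbb{P}^{n-1})^*$.

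I would first analyze, for a general $H \in \pi_2(I)$, the fiber $X_H := \pi_2^{-1}(H) = \{\Lambda \in X \mid \Lambda \subset H\}$. For generic $H$ the induced form on $H$ is nondegenerate, so $X_H$ sits inside $OG(k, n-1)$; using the restriction-variety framework and Algorithm \ref{algorithm2}, one identifies $[X_H]$ and verifies that the sub-index at position $t$ remains essential. By induction on $\dim X$, with base case the class $\sigma_{(1,\ldots,t,t+2,\ldots,k)}^t$ handled in Corollary \ref{1 is rigid 2}, there exists a unique isotropic $F_t^H$ of dimension $t$ contained in every $\Lambda \in X_H$.

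Next I would compute $[\pi_2(I)] \in A(G(n-1, n))$. The dimension count for $\dim \pi_2(I) = \dim I - \dim X_H$ together with intersection against generic Schubert varieties in $G(n-1, n)$ yields $[\pi_2(I)] = m\,\sigma_{\tilde a}$ for a Schubert index $\tilde a$ in $G(n-1, n)$ with $\tilde a_t = t$ and $\tilde a_{t+1} \neq t+1$. To show $m = 1$, I would intersect $X$ with a generic Schubert variety $\Sigma_c \subset OG(k, n)$ chosen so that $X \cap \Sigma_c$ represents $\sigma_{(1,\ldots,t,t+2,\ldots,k)}^t$; Corollary \ref{1 is rigid 2} then produces a distinguished $F_t^{\Sigma_c}$, and the span of $F_t^{\Sigma_c}$ with the flag defining $\Sigma_c$ pinpoints a unique hyperplane meeting the Schubert condition and contained in $\pi_2(I)$, forcing $m = 1$. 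Applying Theorem \ref{index}(2) to $\pi_2(I)$ produces a unique $t$-dimensional $F_t \subset V$ such that $F_t \subset H$ for every $H \in \pi_2(I)$.

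Finally, for any $\Lambda \in X$, every hyperplane containing $\Lambda$ lies in $\pi_2(I)$ and hence contains $F_t$; since the intersection of all hyperplanes through $\Lambda$ equals $\Lambda$ itself, we conclude $F_t \subset \Lambda$. Isotropy of $F_t$ is automatic as $F_t \subset \Lambda$, and uniqueness of $F_t$ is inherited from Theorem \ref{index}. I expect the main obstacle to be the precise determination of $[\pi_2(I)]$ as a single Schubert class with essential sub-index $\tilde a_t = t$, and in particular the verification that $m = 1$: this requires a careful application of Algorithm \ref{algorithm2} and a reduction to Corollary \ref{1 is rigid 2} via a well-chosen auxiliary Schubert variety, together with an argument that the distinguished $F_t^{\Sigma_c}$ is independent of $\Sigma_c$.
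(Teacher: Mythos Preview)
Your approach is genuinely different from the paper's, and it has two gaps that I do not see how to close.

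The paper does not induct via hyperplane sections. It inducts on $k-s$: for a general point $q$ in the quadric $Q_X$ swept out by the $k$-planes of $X$, the locus $X_q=\{\Lambda\in X\mid q\in\Lambda\}$ has class $\sigma_{a'}^{b'}$ in the \emph{same} $OG(k,n)$, with one fewer $b$-index. By induction $X_q$ carries a unique isotropic $(t{+}1)$-plane $F^q_{t+1}$, and the closure of $\{(q,F^q_{t+1})\}$ projects to a subvariety of $OG(t{+}1,n)$ whose class is exactly $\sigma_{1,\dots,t}^{b_1}$ (or $\sigma_{1,\dots,t,t+2,\dots,t'}^{t}$ in a boundary case). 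Proposition~\ref{1 is rigid} or Corollary~\ref{1 is rigid 2} then produces the fixed $F_t$. The point is that the class of $X_q$ is immediately read off, the ambient dimension $n$ never drops, and the auxiliary variety lands in an orthogonal Grassmannian of \emph{smaller} $k$, where the needed rigidity is already established.

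Your hyperplane scheme faces two concrete obstacles. First, you need $[X_H]$ as a Schubert class in $OG(k,n-1)$ in order to invoke the inductive hypothesis, but Algorithm~\ref{algorithm2} computes pushforwards $i_*\colon A(OG(k,n{-}1))\to A(OG(k,n))$, not the restriction $i^*$ going the other way; in general the restriction of a Schubert class decomposes as a sum, so there is no reason to expect $X_H$ (which is irreducible) to represent a single Schubert class with $a_t=t$ essential. Second, your induction on $\dim X$ passes from $OG(k,n)$ to $OG(k,n{-}1)$; starting from $n=2k+2$ you land in $OG(k,2k+1)$, where neither the proposition's hypothesis $n\ge 2k+2$ nor Corollary~\ref{1 is rigid 2} is available (and indeed the remark after Corollary~\ref{1 is rigid 2} shows that the analogous statement can fail there). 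Your declared base case covers only the single class $\sigma_{(1,\dots,t,t+2,\dots,k)}^t$, so for $n=2k+2$ and any other class with $a_t=t$ essential the argument has no anchor. The paper's point-in-$Q_X$ induction sidesteps both issues.
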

\begin{proof}
We will prove it by induction on $k-s$. If $k-s=0$, then $a_t$ is rigid by Theorem \ref{index}. 

Now assume $k-s\geq1$. Let $X$ be a subvariety representing the Schubert class $\sigma_{a;b}$ in $OG(k,n)$. By \cite{Coskun2014RigidityOS} Lemma 6.2, the $k$-planes parametrized by $X$ sweep out a quadric $Q_X$ of dimension $n-b_1-2$. For a general point $q\in Q_X$, let $X_q$ be the locus of $k$-planes parametrized by $X$ that contains $q$. Then $[X_q]=\sigma_{a';b'}$, where $a'_1=1$, $a'_{i+1}=a_{i}+1$ if $a_i\leq b$ and $a'_{i+1}=a_i$ if $a_i>b$ for $1\leq i\leq s$, $b'_{j+1}=b_{j}$ for $s+1\leq j\leq k-1$.

If $b_1\neq t$ or $a_{t+1}-a_t\geq 3$, then $a'_{t+1}=t+1$, $a'_{t+2}\neq t+2$ and $|b'|<|b|$. By induction, there exists a unique $(t+1)$-dimensional space $F^q_{{t+1}}$ that is contained in every $k$-plane parametrized by $X_q$. Notice that $q$ has to be contained in $F^q_{t+1}$ since otherwise $F^q_{t+1}$ can be replaced by any codimension $1$ linear subspace of the span of $F^q_{t+1}$ and $q$, which contradicts the uniqueness. Let $I$ be the Zariski closure of the locus of all possible pairs $\{(q,F^q_{t+1})\}$ in $Q_X\times OG(t+1,n)$.

Let $\pi_1:I\rightarrow Q_X$ and $\pi_2:I\rightarrow OG(t+1,n)$ be the two projections. Specializing $X$ to a Schubert variety specializes $\pi_2(I)$ to a Schubert variety $\Sigma_{1,...,t;b_1}$. Therefore 
$$[\pi_2(I)]=\sigma_{1,...,t;b_1}\in A(OG(t+1,n)).$$ 
By Proposition \ref{1 is rigid}, there exists a unique isotropic subspace $F_{t}$ that is contained in all $F^q_{t+1}$, which in turn shows that $F_t$ is contained in every $k$-plane parametrized by $X$. 

The proof of the case when $b_k=t$ and $a_{t+1}-a_t= 2$ is almost the same except that for a general point $q\in Q_X$, we identify the unique linear space $F^q_{t'+1}$ corresponding to $X_q$ where $t'$ is the sub-index such that $a'_{t'}=t'$ and $a'_{t'+1}\neq t'+1$. The image $\pi_2(I)\subset OG(t'+1,n)$ has cohomology class $\sigma_{1,...,t,t+2,...,t';t}$ which will lead to the same result by Corollary \ref{1 is rigid 2}.

\end{proof}

Then we characterize the rigid sub-indices in $b_\bullet$.

\begin{proposition}
\label{quadric rigd}
The sub-index $b_1$ is rigid if and only if either $b_1=0$ or there exist $i$ and $j$ such that $a_i=b_j$ and $x_j>k-j+b_j-\frac{n-3}{2}$, where $x_j:=\#\{i|a_i\leq b_j\}$. 
\end{proposition}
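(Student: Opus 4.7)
The plan is to reformulate rigidity of $b_1$ as a statement about the quadric $Q_X$ swept out by the $k$-planes in $X$. By Lemma 6.2 of \cite{Coskun2014RigidityOS}, $Q_X$ is a quadric of dimension $n-b_1-2$, so its corank $r$ is bounded above by $b_1$, and rigidity of $b_1$ is equivalent to the assertion that $r = b_1$ for every representative $X$, with the singular locus being the unique $F_{b_1}$. The case $b_1 = 0$ is immediate since the only $0$-dimensional isotropic subspace is $\{0\}$ and the incidence condition becomes vacuous. So throughout the remainder one may assume $b_1 \geq 1$.

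For the ``if'' direction, assume there exist $i,j$ with $a_i = b_j$ and $x_j > k-j+b_j - (n-1)/2$. I would invoke the rigidity of the essential sub-index $a_i$, already established in the preceding sequence of propositions (notably Proposition~\ref{1 is rigid} together with its corollaries), to produce a unique isotropic subspace $F_{a_i}$ of dimension $a_i = b_j$ with $\dim(\Lambda \cap F_{a_i}) \geq i$ for every $\Lambda \in X$. Since $\Lambda$ is isotropic and meets $F_{a_i}$ nontrivially, in fact $\Lambda \subset F_{a_i}^\perp$; hence $Q_X \subset \mathbb{P}(F_{a_i}^\perp)$ and $r \geq a_i = b_j \geq b_1$. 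Together with $r \leq b_1$ this yields $r = b_1$, and $F_{b_1}$ is then the unique $b_1$-dimensional isotropic subspace of the $b_j$-dimensional singular locus determined by $F_{a_i}$ (cut out using the other flag data of $X$).

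For the ``only if'' direction, assume $b_1 \geq 1$ and that no such $(i,j)$ exists. I would construct an explicit restriction variety $V(F_\bullet,Q_\bullet)$ representing $\sigma_a^b$ in which the outermost quadric has corank $r_1 < b_1$, hence exhibiting a non-Schubert representative in which no canonical $F_{b_1}$ can be identified. Concretely, I would start from the Schubert sequence for $\sigma_a^b$ and decrement $r_1$ by one while raising $d_1$ accordingly, then check that the resulting sequence satisfies admissibility conditions $(1)$–$(3)$ and $(A1)$–$(A3)$ of Definition~\ref{sequence}. The failure of the inequality $x_j > k - j + b_j - (n-1)/2$ at every $j$ with $a_i = b_j$ is exactly what permits $(A3)$ to remain valid after the decrement. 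Running Algorithm~\ref{algorithm2} on the resulting quadric diagram, I expect the only Schubert class appearing in the output to be $\sigma_a^b$, thus confirming $[V]=\sigma_a^b$ and establishing non-rigidity.

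The main obstacle is the two-sided matching between the strict inequality $x_j > k-j+b_j-(n-1)/2$ and the admissibility condition $(A3)$: on the one hand, verifying that this inequality is sharp enough to close the cohomological gap forcing $r \leq b_1$ in the rigid case (ensuring that excess corank would contradict $[X] = \sigma_a^b$ via an algorithm-based class calculation in the spirit of Remark~\ref{classing}); on the other hand, verifying that its failure is exactly what permits the admissible relaxation used in the non-rigid construction. Both halves demand careful case analysis of how $(A3)$ propagates through the steps of Algorithm~\ref{algorithm2} for the specific diagram produced.
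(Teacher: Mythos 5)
Your reformulation (rigidity of $b_1$ is equivalent to the corank of $Q_X$ being exactly $b_1$ for every representative) is correct and is the paper's starting point, and your ``only if'' sketch is in the right spirit. But the ``if'' direction has two genuine problems. First, it is circular: you invoke the rigidity of the sub-index $a_i$ in the case $a_i=b_j$, but in the paper that fact is Proposition~\ref{isorigid1}, which is proved \emph{after} and \emph{by means of} the rigidity of the quadric sub-indices (Propositions~\ref{quadric rigd} and \ref{quadric rigid}). The results available before this point (Proposition~\ref{1 is rigid} and its corollaries) only handle sub-indices of the form $a_t=t$, i.e.\ the case $F_{a_t}\subset\Lambda$, not a general essential $a_i$ equal to some $b_j$. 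Second, even granting a fixed $F_{a_i}$ with $\dim(\Lambda\cap F_{a_i})\geq i$, the deduction ``$\Lambda$ is isotropic and meets $F_{a_i}$, hence $\Lambda\subset F_{a_i}^\perp$'' is false unless $i=a_i$: an isotropic $\Lambda$ meeting $F_{a_i}$ in an $i$-dimensional subspace $W$ is only contained in $W^\perp$, not in $F_{a_i}^\perp$. So the inequality $r\geq a_i=b_j$ does not follow, and the forward implication is not established.

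The paper avoids both issues by proving the two directions simultaneously through a single equivalence: if $r\neq b_1$ then $Q_X$ lies in a smooth quadric of dimension $n-2$, so $X$ is (by indecomposability of its class) a Schubert variety of $OG(k,n-1)$, which sits in $OG(k,n)$ as the restriction variety with $d_j+r_j=n-1$ for all $j$; hence $b_1$ fails to be rigid exactly when Algorithm~\ref{algorithm2} applied to such a diagram returns the single class $\sigma_a^b$, and tracking when each step returns only $D^a$ (respectively is forced to return $D^b$) yields precisely the negation of the stated numerical condition. If you want to keep a two-directional structure, your ``only if'' construction should lower \emph{all} the coranks $r_j$ (so that the resulting sequence is the image of a genuine Schubert variety of $OG(k,n-1)$), not just $r_1$: lowering $r_1$ alone does not place $X$ inside a smaller quadric and does not obviously produce a representative with class exactly $\sigma_a^b$. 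And the ``if'' direction then has to come from the same algorithmic computation (showing the pushed-forward class acquires extra terms when the inequality holds), not from a prior rigidity statement about $a_i$.
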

\begin{proof}
The proof is due to the observation that $b_1$ is not rigid if and only if we can find a deformation in $OG(k,n-1)$. Let $X$ be a subvariety in $OG(k,n)$ representing the Schubert class $\sigma_{a;b}$. Let $Q_X$ be the quadric swept out by the $k$-planes parametrized by $X$. Let $r$ be the corank of $Q_X$.

It is clear that if $b_1=0$, then it is rigid since the quadric $Q_X$ has to be the whole quadric $Q$. Now assume $b_1>0$. If $r=b_1$, let $\mathbb{P}(F_{b_1})$ be the singular locus of $Q_X$. Then $Q_X=Q\cap \mathbb{P}(F^\perp_{b_1})$ and every $k$-plane parametrized by $X$ is contained in $F^\perp_{b_1}$. Therefore $b_1$ is rigid.

If $r\neq b_1$, then $Q_X$ is contained in a smooth quadric of dimension $n-2$ and therefore $X$ can be viewed as a subvariety of $OG(k,n-1)$. Notice that the cohomology class of $X$ considered as a subvariety of $OG(k,n-1)$ is also a Schubert class $\sigma_{a'}^{b'}$. A Schubert variety $\Sigma_{a'}^{b'}$ in $OG(k,n-1)$ can be viewed as a restriction variety $R$ in $OG(k,n)$ defined by a partial flag:
$$G_{a'_1}\subset...\subset G_{a'_s}\subset {Q'}_{n-1-b'_{k-s}}^{b'_{k-s}}\subset...\subset {Q'}_{n-1-b'_1}^{b'_1}.$$
Therefore $b_1$ is not rigid if and only if the cohomology class of $R$ is the Schubert class $\sigma_{a;b}$.

Applying the Algorithm \ref{algorithm2}, if the cohomology class $[R]$ is a Schubert class, then at each stage, it should return either $D^a$ or $D^b$. 

It always returns $D^a$ if and only if $a'_i\neq b'_j+2$ for all $1\leq i\leq s$ and $1\leq j\leq k-s$. Notice that $D^a$ is obtained from $D$ by increasing the corank of every quadric by $1$ and is always admissable, we get $a_i=a'_i$, $b_j=b'_j+1$ and therefore $a_i\neq b_j$ for all $i,j$.

If it returns $D^b$ at some stages, assume it first does at the $j$-th quadric, then $n$ has to be odd and 
\begin{eqnarray}
x_j=k-j+1-\frac{n-2b_j'-1}{2}.
\end{eqnarray}
At all the previous stages, it will return $D^a$, which gives $a'_i\neq b'_{j'}+2$ for all $1\leq i\leq s$ and $1\leq j'\leq j$. Equation (1) guarantees that at all the succeeding stages, it will return $D^a$ for all non-essential quadric and will return $D^b$ for every essential quadric. In our case $D^b$ is always admissable and therefore $b_j=b'_j+1$, $a_i=b'_j+1$ if $b'_j$ is essential, $a_i-b'_{j+t}=2$ and $b'_{j+t}-b'_j=t$, and $a_i=a'_i$ for all other $i$. Consequently, we get $b_1$ is not rigid if and only if either $a_i\neq b_j$ for all $i,j$ or $n$ is odd and for all $j$ such that $b_j=a_i$ for some $i$, $x_j=k-j+1-\frac{n-2b_j-1}{2}$.

\end{proof}

As an application, we show that Corollary \ref{1 is rigid 2} is also true when $n=2k$. 

\begin{corollary}
Assume $n= 2k$. Let $\sigma_{a;b}$ be a Schubert class in $OG(k,2k)$, where $a=(1,...,t,t+2,...,k)$ and $b=t$, $1\leq t\leq k-2$. Then the sub-index $a_t=t$ is rigid.
\end{corollary}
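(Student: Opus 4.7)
My plan is to reduce to the rigidity of the $b$-sub-index $b_1 = t$ (already available from Proposition~\ref{quadric rigd}) and then leverage the fact that $n = 2k$ forces the maximal isotropic subspaces of $V$ to have dimension exactly $k$.

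First I would apply Proposition~\ref{quadric rigd} to $\sigma_a^b = \sigma_{1,\ldots,t,t+2,\ldots,k}^{\,t}$. The pair $(i,j) = (t,1)$ satisfies $a_i = b_j = t$; a short count gives $x_1 = \#\{\mu\mid a_\mu\leq t\} = t$, while $k - j + b_j - \tfrac{n-1}{2} = t - \tfrac{1}{2}$ because $n = 2k$. Hence $x_1 > k - j + b_j - \tfrac{n-1}{2}$, and Proposition~\ref{quadric rigd} produces a unique $t$-dimensional isotropic subspace $F_t\subset V$ such that $\Lambda \subset F_t^\perp$ for every $\Lambda \in X$.

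The key step will then be to upgrade this orthogonality to a containment. Since $\Lambda$ and $F_t$ are both $q$-isotropic and $\Lambda \subset F_t^\perp$, every vector in $\Lambda$ is $q$-orthogonal to every vector in $F_t$, so the sum $\Lambda + F_t$ is itself $q$-isotropic. Because $\dim V = 2k$ and $q$ is non-degenerate, every $q$-isotropic subspace has dimension at most $k = \dim \Lambda$; combined with $\Lambda \subseteq \Lambda + F_t$, this forces $\Lambda + F_t = \Lambda$, i.e.\ $F_t \subseteq \Lambda$. For uniqueness, any other $t$-dimensional isotropic subspace $F_t'$ contained in every $\Lambda \in X$ would automatically satisfy $\Lambda \subset (F_t')^\perp$ (since $\Lambda$ is isotropic and contains $F_t'$), so the uniqueness clause of Proposition~\ref{quadric rigd} would give $F_t' = F_t$.

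I do not expect a serious obstacle here beyond correctly checking the numerical hypothesis of Proposition~\ref{quadric rigd}. The conceptual point is that the equality $n = 2k$ is precisely what makes the maximal-isotropy argument work: it is the only place where a $q$-isotropic subspace of dimension $k$ is maximal, and this is exactly what allows one to promote the orthogonality condition $\Lambda \subset F_t^\perp$ to the containment $F_t \subset \Lambda$. This matches the fact that the analogous rigidity statement fails when $n = 2k+1$, as observed after Corollary~\ref{1 is rigid 2}.
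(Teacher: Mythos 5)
Your proposal is correct and follows essentially the same route as the paper: both first invoke Proposition~\ref{quadric rigd} to obtain the unique isotropic $F_t$ with $\Lambda\subset F_t^\perp$ for all $\Lambda\in X$, and then use that a $k$-dimensional isotropic subspace of a $2k$-dimensional quadratic space is maximal to promote $\Lambda\subset F_t^\perp$ to $F_t\subset\Lambda$ (the paper phrases this as "every maximal isotropic subspace contained in $Q_X$ contains its singular locus"). Your explicit verification of the numerical hypothesis of Proposition~\ref{quadric rigd} and of the uniqueness clause is a welcome bit of added detail, but not a different argument.
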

\begin{proof}
By Proposition \ref{quadric rigd}, the sub-index $b=t$ is rigid. Let $X$ be a subvariety representing the Schubert class $\sigma_{a;b}$. Then the $k$-planes parametrized by $X$ sweep out a quadric $Q_X$ of dimension $n-t-2$ with corank $t$. Let $F_t$ be the singular locus of $Q_X$. It is clear that every maximal isotropic subspace contained in $Q_X$ must contain the singular locus $F_t$. Therefore the sub-index $a_t$ is rigid.
\end{proof}

\begin{proposition}\label{quadric rigid}
Let $\sigma_{a;b}$ be a Schubert class in $OG(k,n)$. Assume the sub-index $b_j$ is essential. Then the sub-index $b_j$ is rigid if and only if there exist $1\leq i\leq s$ and $j\leq j'\leq k-s$ such that $a_i=b_{j'}$ and $x_{j'}>k-j'+b_{j'}-\frac{n-1}{2}$.
\end{proposition}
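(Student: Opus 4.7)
The plan is to bootstrap Proposition \ref{quadric rigd} (the $b_1$ case) to arbitrary essential sub-indices $b_j$. For the "only if" direction, suppose that for every $j'$ with $j\leq j'\leq k-s$ the stated hypothesis fails: either $b_{j'}\neq a_i$ for all $i$, or $a_i=b_{j'}$ for some $i$ but $x_{j'}\leq k-j'+b_{j'}-\frac{n-1}{2}$. I would then exhibit a non-Schubert representative of $\sigma_a^b$ by constructing a restriction variety $R$ obtained from the Schubert presentation by replacing the $j$-th defining quadric $Q_{n-b_j}^{b_j}$ with a smooth subquadric of dimension $n-b_j-2$ (i.e., lowering its corank from $b_j$ to $b_j-1$), which is exactly the deformation used at the end of the proof of Proposition \ref{quadric rigd}. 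Running Algorithm \ref{algorithm2} on the corresponding quadric diagram, the combinatorial hypothesis guarantees that at every stage the $D^a$ step is the unique admissible output (the critical balance between $x_{j'}$ and $k-j'+b_{j'}-\frac{n-1}{2}$ is precisely the condition $(A3)$ that controls whether $D^b$ is forced). Iterating the algorithm recovers $[R]=\sigma_a^b$, giving the desired deformation.

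For the "if" direction, I would proceed by induction on $j$, with base case $j=1$ handled by Proposition \ref{quadric rigd}. Given $X$ representing $\sigma_a^b$ under the stated hypothesis, the strategy is to reduce to a sub-problem in which $b_j$ plays the role of $b_1$. Fix general isotropic subspaces $G_{b_1}\subsetneq\ldots\subsetneq G_{b_{j-1}}$ (which, by induction on $j$, may be taken to be the unique rigid subspaces associated to the previous sub-indices whenever those are themselves rigid) and form $Y=X\cap\Sigma(G_\bullet)$, where $\Sigma(G_\bullet)$ imposes the conditions $\dim(\Lambda\cap G_{b_{j'}}^\perp)\geq k-j'+1$ for $j'<j$. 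A dimension count combined with generic transversality shows $[Y]=\sigma_{a'}^{b'}$ for an explicit Schubert class in which $b_j$ appears as the smallest quadric sub-index, and the hypothesis $x_{j'}>k-j'+b_{j'}-\frac{n-1}{2}$ is inherited. Proposition \ref{quadric rigd} produces a unique $F_{b_j}^{G_\bullet}$ working for $Y$; finally, I would argue that $F_{b_j}^{G_\bullet}$ is independent of $G_\bullet$ by varying $G_\bullet$ in a one-parameter family inside its own Schubert locus and invoking uniqueness, so the $F_{b_j}^{G_\bullet}$ assemble into a single $F_{b_j}$ intersecting every $\Lambda\in X$ as required.

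The main obstacle is the independence step in the "if" direction. Two difficulties arise. First, one must verify that $[Y]=\sigma_{a'}^{b'}$ is genuinely a single Schubert class, not a union; the restriction $a_i'\neq b_{j'}'+1$ may fail under naive intersection, and one may need to intersect with a slightly modified auxiliary variety that avoids this splitting. Second, showing $F_{b_j}^{G_\bullet}$ is constant in $G_\bullet$ requires exhibiting, for two generic choices $G_\bullet$ and $G_\bullet'$, a deformation inside the parameter space of auxiliary data whose fibers all produce the same isotropic subspace; this is a continuity/specialization argument analogous to the uniqueness argument in the proof of Theorem \ref{index}, but made delicate by the fact that the isotropic subspace $F_{b_j}$ is determined only up to the quadric structure. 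Handling the boundary case $n$ even with $b_j=n/2-1$, where the two connected components of maximal isotropics must be tracked carefully, will also require a separate check.
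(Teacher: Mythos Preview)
Your ``only if'' direction has the right idea but the specific construction is incomplete. Lowering only the $j$-th corank to $b_j-1$ produces a sequence that violates condition $(A2)$ of Definition~\ref{sequence} precisely when some $a_i=b_j$, which is one of the cases you must handle. The paper instead lowers the coranks of \emph{all} quadrics $Q_{n-b_{j'}}^{b_{j'}}$ for $j'\geq j$ simultaneously; when additionally some $a_i=b_{j'}$ with $x_{j'}=k-j'+b_{j'}-\frac{n-1}{2}$ (forcing $n$ odd), a second construction is needed in which the tail of the $a$--sequence is also replaced by the maximal admissible sequence relative to the shifted $b$'s. Your single deformation does not cover this second case.

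Your ``if'' direction has a genuine gap that is more serious than the independence issue you flag. Intersecting $X$ with a Schubert variety $\Sigma(G_\bullet)$ for \emph{general} $G_{b_1}\subset\cdots\subset G_{b_{j-1}}$ computes the product $\sigma_a^b\cdot[\Sigma(G_\bullet)]$, which is a sum of Schubert classes rather than a single $\sigma_{a'}^{b'}$; there is no mechanism by which imposing new general quadric conditions ``strips off'' the existing conditions $b_1,\dots,b_{j-1}$ so that $b_j$ becomes the smallest $b$--index. The paper avoids this entirely: it takes a general point $p$ on the swept--out quadric $Q_X$ and passes to $X_p=\{\Lambda\in X:p\in\Lambda\}$, whose class \emph{is} a single Schubert class $\sigma_{a'}^{b'}$ with $b'=(b_2,\dots,b_{k-s})$, so that $b_j$ becomes $b'_{j-1}$ and the inductive hypothesis applies to $X_p$. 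This yields a family $\{F^p_{b_j}\}_p$; the image $Y$ of the incidence correspondence in $OG(b_j,n)$ has class $\sigma_{1,\dots,b_1,b_1+2,\dots,b_j}^{b_1}$, and Corollary~\ref{rigid k-1} (rigidity of $a_{k-1}=k$ for that class) produces the unique $F_{b_j}$ with $\dim(F_{b_j}\cap F^p_{b_j})\geq b_j-1$ for all $p$. The gluing step you were worried about is thus handled by an auxiliary rigidity result rather than by varying $G_\bullet$.
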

\begin{proof}
First assume all conditions in the proposition hold and we will show the sub-index $b_j$ is rigid. The proof will be done by induction on $j$. 

If $j=1$, then the proposition reduces to Proposition \ref{quadric rigd}. Suppose the proposition is true for $j''<j$. Let $X$ be a subvariety representing $\sigma_{a;b}$ in $OG(k,n)$. Then the $k$-planes paremetrized by $X$ sweep out a quadric $Q_X$ of dimension $n-b_1-2$. Let $p$ be a general point of $Q_X$ and define $X_p:=\{\Lambda\in X|p\in \Lambda\}$. Then $X_p$ has cohomology class $\sigma_{a'}^{b'}$ where $a'_1=1$, $a'_{i+1}=a_i+1$ if $a_i\leq b_1$ and $a'_{i+1}=a_i$ if $a_i>b+1$ for $1\leq i\leq s-1$ and $b'_{\gamma+1}=b_\gamma$ for $1\leq \gamma\leq k-s-1$. 

Notice that the conditions in the proposition hold for $(a;b)$ if and only if they also hold for $(a';b')$. By induction, the sub-index $b'_{j-1}$ is rigid with respect to $(a';b')$. Let $F^p_{b_j}$ be the corresponding linear space and $I$ be the Zariski closure in $Q_X\times OG(b_j,n)$ of the locus of all the possible pairs $(p,F^p_{b_j})$. 

Let $\pi:I\rightarrow OG(b_j,n)$ be the natural projection and let $Y=\pi(I)$ be its image. Specializing $X$ to a Schubert variety specializes $Y$ to a Schubert variety $\Sigma_{1,...,b_1,b_1+2,...,b_j;b_1}$. Therefore $$[Y]=\sigma_{1,...,b_1,b_1+2,...,b_j;b_1}.$$ 
By Corollary \ref{rigid k-1}, there exists a unique isotropic subspace $F_{b_j}$ such that $\dim(F_{b_j}\cap F^p_{b_j})=b_j-1$ for a general $F^p_{b_j}\in Y$. Now for a general $\Lambda\in X$, we can find $p\in \Lambda$ such that $\dim(\Lambda\cap (F^p_{b_j})^\perp)=k-j+2$ and therefore $$\dim(\Lambda\cap (F_{b_j})^\perp)\geq k-j+1.$$ By the semi-continuity of the dimension of the intersection, we conclude that $b_j$ is rigid.

On the other hand, suppose first that $a_i\neq b_j'$ for all $1\leq i\leq s$ and $j\leq j'\leq k-s$. Consider the restriction variety $\Gamma$ defined by
$$F_{a_1}\subset...\subset F_{a_s}\subset Q_{n-b_{k-s}}^{b_{k-s}-1}\subset...\subset Q_{n-b_j}^{b_{n_j}-1}\subset Q_{n-b_{j-1}}^{{b_{j-1}}}\subset...\subset Q_{n-b_1}^{b_1}.$$
Applying Algorithm \ref{algorithm2}, we get $[\Gamma]=\sigma_{a;b}$ and therefore $b_j$ is not rigid.

Now suppose $n$ is odd and let $J$ be the set consisting of all $j\leq j'\leq k-s$ such that $b_{j'}=a_i$ for some $1\leq i\leq s$. Assume that $J\neq\emptyset$ and $x_{j'}=k-j'+b_{j'}-\frac{n-1}{2}$ for all $j'\in J$. Let $j_0=\min\{J\}$. Consider the restriction variety $\Phi$ defined by 
$$F_{a'_1}\subset...\subset F_{a'_s}\subset Q_{n-b_{k-s}}^{b_{k-s}-1}\subset...\subset Q_{n-b_j}^{b_{n_j}-1}\subset Q_{n-b_{j-1}}^{{b_{j-1}}}\subset...\subset Q_{n-b_1}^{b_1},$$
where $a'$ is obtained from $a$ by changing all the $a_i$ such that $a_i\geq b_{j_0}$ to the maximal admissible sequence with respect to $b_{k-s}-1,....,b_{j_0}-1$ (i.e. the sequence consists of all the numbers from $b_{j_0}+1$ to $\left[\frac{n}{2}\right]$ excluding the numbers equal to $b_{j'}$). Apply Algorithm \ref{algorithm2}, $[\Phi]=\sigma_{a;b}$ and therefore $b_j$ is not rigid.
\end{proof}

Next we characterize rigid subindices in $a_\bullet$.

\begin{proposition}
Let $\sigma_{a;b}$ be a Schubert class in $OG(k,n)$. Assume $a_i$ is an essential sub-index such that $a_i\neq b_j$ for all $1\leq j\leq k-s$. Then the sub-index $a_i$ is rigid if either $a_i=a_{i-1}+1$ or $a_{i+1}-a_{i}\geq 3$ and $z_i\neq a_{i+1}-a_i-2$, where $z_i:=\#\{j|a_i\leq b_j<a_{i+1}\}$. 
\end{proposition}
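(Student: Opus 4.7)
The plan is to handle each of the two disjuncts by induction on $k-s$, using the derivative construction $X_q := \{\Lambda \in X \mid q \in \mathbb{P}(\Lambda)\}$ at a general point $q$ of the swept quadric $Q_X$, exactly as in the proofs of Proposition \ref{1 is rigid} and Proposition \ref{quadric rigid}. The cohomology class $[X_q] = \sigma_{a'}^{b'}$ has $k-s$ decreased by one and the shifted sequence $(a', b')$ prescribed in the earlier discussions of $X_q$, so induction applies. The base case $k-s=0$ follows from the natural inclusion $OG(k,n) \hookrightarrow G(k,n)$, Remark \ref{classing}, and Theorem \ref{index}: Case B descends to condition (3) of Theorem \ref{index}, and Case A to condition (4).

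For Case B ($a_{i+1}-a_i \geq 3$ and $z_i \neq a_{i+1}-a_i-2$), I would first trace the hypothesis through the derivative: the gap $a'_{i+2} - a'_{i+1}$ at the new position equals $a_{i+1} - a_i \geq 3$, and the count $z'_{i+1}$ equals $z_i$ (the shift by one of the relevant $a$-values and the drop of $b_1$ from the $b$-sequence cancel within the window $[a_i, a_{i+1})$). So the hypothesis persists. By induction, there is a unique isotropic $F^q_{a_i+1}$ of dimension $a_i+1$ containing $q$ with $\dim(\Lambda \cap F^q_{a_i+1}) \geq i+1$ for every $\Lambda \in X_q$. Let $I \subset Q_X \times OG(a_i+1, n)$ be the Zariski closure of $\{(q, F^q_{a_i+1})\}$ and let $Y$ be its projection to $OG(a_i+1, n)$. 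Specializing $X$ to a Schubert variety specializes $Y$ to $\Sigma_{1,\ldots,a_i}^{b_1}$, so $[Y] = \sigma_{1,\ldots,a_i}^{b_1}$; by Proposition \ref{1 is rigid} (or Corollary \ref{1 is rigid 2} when $b_1 > 0$), there is a unique isotropic $F_{a_i}$ contained in every $F^q_{a_i+1}$. Semi-continuity of intersection dimension gives $\dim(\Lambda \cap F_{a_i}) \geq i$ for all $\Lambda \in X$, and uniqueness passes through the incidence correspondence.

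For Case A ($a_i = a_{i-1}+1$), the same derivative construction applies and the chain condition $a'_{i+1} = a'_i + 1$ persists at the shifted position (since both $a_{i-1}$ and $a_i$ shift in the same way under the $X_q$ formula). Induction on $k-s$ then either keeps us in Case A down to the base case, where Theorem \ref{index}(4) via the natural inclusion $OG(k,n) \hookrightarrow G(k,n)$ concludes, or the iteration exits into Case B when a later $a$-value overtakes the shrinking first $b$-value, at which point the analysis from the previous paragraph applies. In either scenario the same incidence assembly produces the unique $F_{a_i}$.

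The main obstacle is the combinatorial verification that the two conditions $z_i \neq a_{i+1}-a_i-2$ and $a_i = a_{i-1}+1$ are individually preserved, and that Case A transitions cleanly into Case B under iterated derivatives; a careful case split on whether $a_i \leq b_1$ or $a_i > b_1$ at each step is required. A secondary, milder obstacle is the identification of $[Y]$ with $\sigma_{1,\ldots,a_i}^{b_1}$ in $OG(a_i+1,n)$; this is confirmed by degenerating $X$ to a Schubert variety, in which the $F^q_{a_i+1}$ are explicitly the spans of the fixed $F_{a_i}$ with a varying $q \in \mathbb{P}(F_{b_1}^\perp) \cap Q$, sweeping out exactly the asserted Schubert variety.
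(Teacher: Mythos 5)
Your proposal follows essentially the same route as the paper's proof: induction on $k-s$ via the derivative $X_q$ at a general point of $Q_X$, verification that the hypotheses persist for $(a',b')$, the incidence correspondence whose image $Y$ is identified with $\sigma_{1,\ldots,a_i}^{b_1}$ (resp.\ $\sigma_{1,\ldots,b_1,b_1+2,\ldots,a_i}^{b_1}$) by specialization, and an appeal to Proposition \ref{1 is rigid} (resp.\ Corollary \ref{rigid k-1}) followed by semicontinuity. The only cosmetic difference is that you split the two hypotheses into separate cases where the paper treats them uniformly; the substance is identical.
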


\begin{proof}
We will use induction on $k-s$. If $k-s=0$, then the proposition reduces to Theorem \ref{index}.

Now assume the proposition holds for $k-s<\gamma$. Let $X$ be a subvariety representing $\sigma_{a;b}$ and $Q_X$ be the quadric swept out by the $k$-planes parametrized by $X$. For a general point $p\in Q_X$, define $X_p:=\{\Lambda\in X|p\in \Lambda\}$. Then $X_p$ has cohomology class $\sigma_{a'}^{b'}$ where $a'_1=1$, $a'_{\mu+1}=a_\mu+1$ if $a_\mu\leq b_1$ and $a'_{\mu+1}=a_\mu$ if $a_i>b+1$ for $1\leq \mu\leq s-1$ and $b'_{j+1}=b_j$ for $1\leq j\leq k-s-1$. Notice that if the conditions in the proposition hold for $(a;b)$, they also hold for $(a';b')$. By induction, $a'_{i+1}$ is rigid.

If $b_1\geq a_i$, then $a'_{i+1}=a_i+1$. Let $F^p_{a_i+1}$ be the corresponding linear space and let $I$ be the Zariski closure in $Q_X\times OG(a_i+1,n)$ of the locus of all the possible pairs $(p,F^p_{a_i+1})$. 

Let $Y$ be the image of $I$ under the projection $I\rightarrow OG(a_i+1,n)$. By specializing $X$ to a Schubert variety, we get $[Y]=\sigma_{1,...,a_i;b_1}$. By Proposition \ref{1 is rigid}, there is a unique isotropic subspace $F_{a_i}$  such that $F_{a_i}\subset F_{a_i+1}^p$ for all $F^p_{a_i+1}\in Y$. Therefore for a general $\Lambda\in X$, $\dim(\Lambda\cap F^p_{a_{i}+1})=i+1$ for some $p\in \Lambda$, and hence $\dim(\Lambda\cap F_{a_i})\geq i$. By semicontinuity, $\dim(\Lambda\cap F_{a_i})\geq i$ for every $\Lambda\in X$. The uniqueness of $F_{a_i}$ is guaranteed by Proposition \ref{1 is rigid}, and therefore the sub-index $a_i$ is rigid. The proof of the case when $b_1<a_i$ is almost identical except the cohomology class $[Y]=\sigma_{1,...,b_1,b_1+2,...,a_i;b_1}\in A(OG(a_i,n))$. By Proposition \ref{rigid k-1}, we conclude that $a_i$ is rigid.
\end{proof}

\begin{remark}\label{notrigid}
If $a_i=a_{i-1}+1=a_{i+1}-2$ and $a_i\neq b_j$ for all $1\leq j\leq k-s$, then $a_i$ is not rigid. For a proof, see \cite{Coskun2014RigidityOS} Theorem 1.7(2).
\end{remark}

\begin{proposition}\label{isonotrigid}
Assume that $a_i\neq b_j$ for all $1\leq j\leq k-s $, $a_i\neq a_{i-1}+1$ and $z_i=a_{i+1}-a_i-2\geq 1$, where $z_i:=\#\{j|a_i< b_j<a_{i+1}\}$. Then $a_i$ is not rigid.
\end{proposition}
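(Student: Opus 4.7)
The plan is to exhibit an explicit non-Schubert representative of $\sigma_a^b$ as a restriction variety and then verify its cohomology class using Algorithm \ref{algorithm2}, in the same spirit as the constructions in the proofs of Propositions \ref{quadric rigd} and \ref{quadric rigid}.

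First, I unpack the combinatorics of the hypothesis. The open interval $(a_i, a_{i+1})$ contains $a_{i+1}-a_i-1$ integers, while $z_i=a_{i+1}-a_i-2$ of them lie in $\{b_j\}$, so exactly one integer in this interval is missing from $\{b_j\}$. Since the Schubert-index constraint $a_{i+1}\neq b_\nu+1$ rules out $a_{i+1}-1\in\{b_j\}$, this missing integer is forced to be $a_{i+1}-1$. Hence $\{a_i+1,a_i+2,\ldots,a_{i+1}-2\}\subseteq\{b_j\}$, giving full combinatorial control of the block between $a_i$ and $a_{i+1}$.

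Next, I construct the restriction variety $V$. Starting from the defining flag of $\Sigma_a^b$, I delete the isotropic subspace $F_{a_i}$, and for every sub-quadric $Q_{n-b_j}^{b_j}=F_{b_j}^\perp\cap Q$ with $a_i<b_j<a_{i+1}$, I lower its corank by one, replacing it with $Q_{n-b_j}^{b_j-1}$. A dimension count confirms that the resulting sequence of isotropic flats and sub-quadrics is admissible in the sense of Definition \ref{sequence} and defines a restriction variety $V=V(F_\bullet,Q_\bullet)$ of the same dimension as $\Sigma_a^b$. To verify $[V]=\sigma_a^b$, I apply Algorithm \ref{algorithm2} to the associated quadric diagram: because the $b$-indices in $(a_i,a_{i+1})$ form a maximal consecutive block and the modified quadrics are precisely those straddling the missing level $a_i$, every iteration encounters a diagram where the relevant $D^a$ move returns a single admissible diagram rather than a sum, and after the iterations required to process all modified quadrics the algorithm restores the linear-subspace condition at level $a_i$ and terminates at the diagram of $\sigma_a^b$.

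Finally, I verify non-rigidity. For $V$ as constructed, the representatives $\Lambda\in V$ form a family in which the natural $i$-dimensional intersections $\Lambda\cap F_{a_i+1}$ are not confined to a fixed $a_i$-dimensional subspace of $F_{a_i+1}$; equivalently, the collection of $a_i$-dimensional isotropic subspaces that meet some $\Lambda\in V$ in dimension $\geq i$ sweeps a positive-dimensional family inside the Grassmannian of $a_i$-planes, so no single $F_{a_i}$ works for every $\Lambda\in V$. Hence $a_i$ is not rigid. The main obstacle is the algorithmic verification step: one must track the successive $D^a$ and $D^b$ operations through the $z_i$ lowered quadrics, check the admissibility conditions $(A1)$--$(A3)$ at each stage using $\{a_i+1,\ldots,a_{i+1}-2\}\subseteq\{b_j\}$, and confirm that the single surviving diagram matches $\sigma_a^b$ exactly, which requires an induction on $z_i$ and careful case analysis for the interaction with the flanking sub-indices $a_{i-1}$ and $a_{i+1}$.
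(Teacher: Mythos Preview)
Your construction has a structural gap. A restriction variety in $OG(k,n)$ is defined by a sequence of exactly $k$ flag elements ($s$ isotropic subspaces and $k-s$ sub-quadrics, see Definition~\ref{sequence}). When you delete $F_{a_i}$ and only modify coranks of existing quadrics, you are left with $k-1$ elements; the locus they cut out in $OG(k,n)$ has strictly larger dimension than $\Sigma_a^b$ and cannot represent $\sigma_a^b$. You must replace $F_{a_i}$ by some new element, and this is where the admissibility conditions bite: for example, inserting $F_{a_i+1}$ forces $a_i+1$ to collide with a lowered corank $b_j-1=a_i$, violating (A2), while other natural replacements interact with condition~(3) on the corank pattern in ways you have not addressed. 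Your subsequent claims that ``the algorithm returns a single admissible diagram'' and that ``the $i$-dimensional intersections are not confined to a fixed $a_i$-plane'' are both plausible but are asserted rather than argued; the second in particular requires a genuine computation, since a restriction variety with the correct class can still accidentally satisfy the rank condition for \emph{some} $F_{a_i}$.

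The paper proceeds differently and avoids restriction varieties here. It reduces to the known base case $a'_{i+1}-a'_i=2$ with no $b'$-index in between (Remark~\ref{notrigid}, i.e.\ Coskun's Theorem~1.7(2)) by passing to $OG(k-z_i,n)$ with a shifted index $(a';b')$, taking a non-Schubert representative $Y$ there, and then building $X\subset OG(k,n)$ as the closure of spans of $(k-z_i)$-planes in $Y$ with general $z_i$-planes chosen from $F_{a_i+1}^\perp\setminus F_{a_{i+1}}^\perp$. The class of $X$ is read off by specializing $Y$ to a Schubert variety, and the failure of rigidity for $a_i$ is inherited directly from the failure for $a'_i$ in $Y$, so no diagram bookkeeping is needed.
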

\begin{proof}
We will construct a subvariety representing $\sigma_{a;b}$ such that there is no $F_{a_i}$ that meets all $k$-planes parametrized by $X$ in dimension at least $i$.

The equality $z_i=a_{i+1}-a_i-2\geq 1$ implies that the sequence $b$ contains $a_i+1,...,a_{i+1}-2$. Say $b_\gamma=a_i+1$. First assume that $\gamma=1$. Choose a complete isotropic flag $F_\bullet$. In $OG(k-z_i,n)$, consider the Schubert index $(a';b')$ defined by $a'_\mu=a_\mu+z_i$ for $1\leq \mu\leq i$, $a'_{\mu}=a_{\mu}$ for $i+1\leq \mu\leq s$, $b'_j=b_{j+z_i}$ for $1\leq j\leq k-s-z_i$. Then $a'_i=a_i+z_i$ is not rigid by Remark \ref{notrigid}. Let $Y$ be a subvariety of $OG(k-z_i,n)$, but not a Schubert variety, such that $[Y]=\sigma_{a';b'}$, $\dim(\Lambda\cap F_{a'_\mu})\geq \mu$ for $\mu\neq i$, $\dim(\Lambda\cap F_{b'_j}^\perp)\geq k-j+1$ for $1\leq j\leq k-s-z_i$ and for all $\Lambda\in Y$. Let $X$ be the Zariski closure in $OG(k,n)$ of the following locus of $k$-planes:
$$\{\text{span}\{G_{z_i},\Lambda\}|G_{z_i}\text{ is a linear subspace contained in } F_{a_i+1}^\perp\backslash F_{a_{i+1}}^\perp, \Lambda\in Y,\Lambda\subset G_{z_i}^\perp\}. $$
By specializing $Y$ to a Schubert variety, we can see $[X]=\sigma_{a;b}$. Furthermore, $X$ is not isomorphic to a Schubert variety.

Now assume $\gamma\geq 2$. Let $Z$ be a non-Schubert subvariety of $OG(k-\gamma+1,n)$ representing $\sigma_{a';b'}$, where $a'=a$, $b'_j=b_{j+\gamma-1}$ for $1\leq j\leq k-s-\gamma+1$, such that there does not exist a linear space of dimension $a_i$ that meet all $k-\gamma+1$-planes parametrized by $Z$ in dimension at least $i$. Let $T$ be the Zariski closure of 
$$\{\Lambda\in OG(k,n)|\dim(\Lambda\cap F_{b_j}^\perp)\geq k-j+1, 1\leq j<\gamma,\Lambda\cap F_{b_\gamma}\in Y\}.$$
By specializing $Z$ to a Schubert variety, we can see $[T]=\sigma_{a;b}$. Since $T$ is not a Schubert variety, we conclude that $a_i$ is not rigid.
\end{proof}

Now we consider the case when $a_i=b_j$ for some $1\leq j\leq k-s$.

\begin{proposition}\label{isorigid1}
Assume that $a_i=b_j$ for some $1\leq j\leq k-s$. Then $a_i$ is rigid if and only if $x_{j}>k-j+b_{j}-\frac{n-3}{2}$.
\end{proposition}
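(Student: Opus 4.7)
The proof splits into two directions. The plan for the ``only if'' direction (i.e., $x_j \leq k - j + b_j - (n-1)/2$ implies $a_i$ is not rigid) is to construct an explicit non-Schubert representative of $\sigma_a^b$. Following the strategy used in Proposition \ref{isonotrigid} and the non-rigidity half of Proposition \ref{quadric rigid}, I would build a restriction variety $V(F_\bullet', Q_\bullet')$ in which the isotropic subspace of dimension $a_i$ in the defining flag has been replaced by a lower-dimensional isotropic space, with the quadric at position $j$ adjusted to compensate. The numerical hypothesis is precisely the condition ensuring that Algorithm \ref{algorithm2}, applied to the corresponding quadric diagram, terminates in the single class $\sigma_a^b$ with no extra summands. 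Since the defining flag then admits a positive-dimensional family of choices for the isotropic subspace of dimension $a_i$, this shows $a_i$ cannot be rigid.

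For the ``if'' direction, assume $x_j > k - j + b_j - (n-1)/2$. My plan is to invoke Proposition \ref{quadric rigid} (with $j' = j$), which yields the rigidity of $b_j$: for every subvariety $X$ representing $\sigma_a^b$, there is a unique isotropic $F_{b_j}$ of dimension $b_j = a_i$ with $\dim(\Lambda \cap F_{b_j}^\perp) \geq k - j + 1$ for all $\Lambda \in X$. I will then set $F_{a_i} := F_{b_j}$ and show that $\dim(\Lambda \cap F_{b_j}) \geq i$ for every $\Lambda \in X$, with uniqueness inherited from that of $F_{b_j}$.

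To establish the intersection bound I would argue by induction on $k-s$, mirroring the structure of the immediately preceding propositions. The base case follows from Proposition \ref{1 is rigid} and Corollary \ref{rigid k-1}. For the inductive step, fix a general point $p$ on the quadric $Q_X$ swept out by $X$ and consider $X_p = \{\Lambda \in X \mid p \in \Lambda\}$ as before; its class $\sigma_{a'}^{b'}$ has indices shifted so that $a'_{i+1} = b'_{j+1} = a_i$ whenever $b_1 < a_i$, and the numerical hypothesis on $x_j$ is preserved under the shift (with $k, n, j$ replaced by $k, n-2, j+1$ in the appropriate indexing). By induction the sub-index $a'_{i+1}$ is rigid for $X_p$, producing a unique $F^p_{a_i}$. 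Forming the incidence correspondence $I = \overline{\{(p, F^p_{a_i})\}} \subset Q_X \times OG(a_i, n)$ and specializing $X$ to a Schubert variety identifies the class of $\pi_2(I)$; applying Proposition \ref{1 is rigid} or Corollary \ref{rigid k-1} to this class then yields a unique global $F_{a_i}$, which must coincide with $F_{b_j}$ by uniqueness. The special case $j = 1$ (so $b_1 = a_i$) is handled by a direct argument using Propositions \ref{quadric rigd} and \ref{1 is rigid}.

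The main obstacle will be the non-rigidity construction in the ``only if'' direction: identifying the exact modification of the defining flag and verifying through Algorithm \ref{algorithm2} that the resulting quadric diagram produces $\sigma_a^b$ without any additional summands. A secondary subtlety in the ``if'' direction is the bookkeeping required to show that the inequality $x_j > k - j + b_j - (n-1)/2$ is preserved under the index shift from $X$ to $X_p$, so that the induction actually applies.
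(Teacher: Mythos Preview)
Your ``only if'' direction matches the paper: the counterexample is exactly the restriction variety $\Phi$ constructed at the end of the proof of Proposition~\ref{quadric rigid}, and the paper simply cites that construction.

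Your ``if'' direction takes a different, and unnecessarily laborious, route. The paper does start as you do, invoking Proposition~\ref{quadric rigid} to get a unique $F_{b_j}$; but it then identifies $F_{a_i}$ directly as the singular locus of the quadric $Q_{n-a_i}^{a_i}=Q\cap\mathbb{P}(F_{b_j}^\perp)$ and argues in one line by specialization: the closed locus $Y=\{\Lambda\in X:\dim(\Lambda\cap F_{a_i})\geq i\}$ equals all of $X$ when $X$ is a Schubert variety, hence $[Y]=[X]$ and so $Y=X$. No induction on $k-s$, no incidence correspondence, and none of the Proposition~\ref{1 is rigid}/Corollary~\ref{rigid k-1} machinery is needed here. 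Your inductive scheme may be salvageable, but note that the shift ``$k,n,j$ replaced by $k,n-2,j+1$'' is not right (passing to $X_p$ keeps you in $OG(k,n)$; only the $a,b$ sequences change), and your base case is not clearly identified since the hypothesis $a_i=b_j$ is vacuous when $k-s=0$.

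There is also a genuine gap in your uniqueness claim. Saying uniqueness is ``inherited from that of $F_{b_j}$'' is incorrect: uniqueness of $F_{b_j}$ for the condition $\dim(\Lambda\cap F_{b_j}^\perp)\geq k-j+1$ does not preclude some other $G_{a_i}\neq F_{b_j}$ from satisfying $\dim(\Lambda\cap G_{a_i})\geq i$. The paper handles uniqueness by a separate short contradiction: if such a $G_{a_i}$ existed, pick $p\in\mathbb{P}(F_{a_i})\setminus\mathbb{P}(G_{a_i})$, compute $[X_p]=\sigma_{a'}^{b'}$, and observe that the span of $p$ and $G_{a_i}$ would force $a'_{i+1}\leq a_i+1$, contradicting essentiality of $a_i$.
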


\begin{proof}
If $x_{j}=k-j+b_{j}-\frac{n-3}{2}$, then the proof of Proposition \ref{quadric rigid} gives a counter-example which shows $a_i$ is not rigid.

Now assume $x_{j}>k-j+b_{j}-\frac{n-3}{2}$. Let $X$ be a subvariety representing the Schubert class $\sigma_{a;b}$. By Proposition \ref{quadric rigid}, $b_j$ is rigid. Let $Q_{n-a_i}^{a_i}$ be the corresponding quadric with singular locus $F_{a_i}$. Consider $Y:=\{\Lambda\in X|\dim(\Lambda\cap F_{a_i})\geq i\}$. By specializing $X$ to a Schubert variety, we get $Y=X$, i.e. $\dim(\Lambda\cap F_{a_i})\geq i $ for every $\Lambda\in X$. For the uniqueness, suppose, for a contradiction, that there is another $G_{a_i}\neq F_{a_i}$ such that $\dim(\Lambda\cap G_{a_i})\geq i $, $\forall\Lambda\in X$. Let $p$ be a general point in $\mathbb{P}(F_{a_i})$ not contained in $\mathbb{P}(G_{a_i})$. Define $X_p:=\{\Lambda\in X|p\in \Lambda\}$. Then $[X_p]=\sigma_{a';b'}$ where $a_1=1$, $a'_\mu=a_{\mu-1}+1$ for $2\leq \mu \leq i$, $a'_\mu=a_\mu$ for $i<\mu\leq s$ and $b'_j=b_j$ for $1\leq j\leq k-s$. On the other hand, let $G_{a_i+1}$ be the span of $p$ and $G_{a_i}$. By assumption, $\dim(\Lambda\cap G_{a_i+1})\geq i+1$ for every $\Lambda\in X_p$. It force $a_i+1\geq a'_{i+1}=a_{i+1}$ and therefore $a_i$ is not essential. We conclude that such $F_{a_i}$ is unique and hence $a_i$ is rigid.
\end{proof}

Notice that Proposition \ref{quadric rigd}-Proposition \ref{isorigid1} have proved Theorem \ref{main theorem}. As a corollary, we characterize the rigid Schubert classes in $OG(k,n)$.

\begin{proof}[Proof of Theorem \ref{rigidclass}]It is easy to see that all the conditions in Theorem \ref{rigidclass} hold if and only if all the essential sub-indices are rigid. If some essential sub-indices $a_i$ or $b_j$ are not rigid, then the proof of Proposition \ref{quadric rigid}, Proposition \ref{isonotrigid} and Proposition \ref{isorigid1} construct a non-Schubert variety representing $\sigma_{a;b}$. 

Now assume all the essential indices are rigid. Let $X$ be a subvariety representing $\sigma_{a;b}$ and let $\{F_{a_i}\}_{1\leq i\leq s}$ and $\{F_{b_j}\}_{1\leq j\leq k-s}$ be the corresponding isotropic subspaces. It suffices to show that they form a flag. 

We will use induction on $k-s$. If $k-s=1$, then the proposition reduces to Theorem \ref{rigid class in g}.

Now assume the proposition is true for $k-s<\gamma$. Let $p\in Q_X$ be a general point and define $X_p:=\{\Lambda\in X|p\in \Lambda\}$. Then $[X_p]=\sigma_{a';b'}$, where $a'_1=1$, $a'_{i+1}=a_i+1$ if $a_i\leq b_1$, $a'_{i+1}=a_i$ if $a_i>b_1$ and $b'_{j}=b_{j+1}$ for $1\leq j\leq k-s-1$. It is easy to check that if all essential indices in $(a;b)$ are rigid, then all essential indices in $(a';b')$ are also rigid. Let $\{F'_{a_i}\}_{1\leq i\leq s}$ and $\{F'_{b_j}\}_{1\leq j\leq k-s}$  be the isotropic subspaces corresponding to $X_p$. By induction on $k-s$, $\{F'_{a_i}\}_{1\leq i\leq s}$ and $\{F'_{b_j}\}_{1\leq j\leq k-s}$ form a partial flag. Notice that $F'_{a'_{i+1}}=F'_{a_i+1}$ are span of $F_{a_i}$ and $p$ for $a_i\leq b_1$, $F'_{a'_{i+1}}=F'_{a_i}$ are the span of $p$ and $F_{a_i}\cap p^\perp$ for $a_i>b_1$, $F'_{b'_j}=F'_{b_{j+1}}$ are the span of $p$ and $F_{b_{j+1}}\cap p^\perp$ for $1\leq j\leq k-s-1$. As we varying $p$, it is easy to see that $\{F_{a_i}\}_{1\leq i\leq s}$ and $\{F_{b_j}\}_{1\leq j\leq k-s}$ have also to form a partial flag. We conclude that the Schubert class is rigid if all essential indices are rigid.
\end{proof}


\begin{thebibliography}{10}
\bibitem{BSMF_1961__89__461_0}
Borel, A. and Haefliger, A. 
\newblock La classe d'homologie fondamentale d'un espace analytique.
\newblock {\em Bulletin de la Soci\'et\'e Math\'ematique de France },
  89:461--513, (1961).

\bibitem{RB2000}
Byrant, R.
\newblock Rigidity and quasi-rigidity of extremal cycles in compact Hermitian symmetric spaces.
\newblock {\em math. DG.} /0006186.  



\bibitem{Coskun2011RigidAN}
Coskun, I.
\newblock Rigid and non-smoothable Schubert classes.
\newblock {\em Journal of Differential Geometry }, 87:493--514, (2011).

\bibitem{Coskun2011RestrictionVA}
Coskun, I.
\newblock Restriction varieties and geometric branching rules.
\newblock {\em Advances in Mathematics }, 228:2441--2502, (2011).

\bibitem{Coskun2014RigidityOS}
Coskun, I.
\newblock Rigidity of Schubert classes in orthogonal Grassmannians.
\newblock {\em Israel Journal of Mathematics }, 200:85--126, (2014).

\bibitem{Bra08}
Coskun, I.
\newblock Restriction varieties and the rigidity problem. 
\newblock {\em Ems Press.} 10.4171/182-1/4, (2018): 49-95

\bibitem{3264}
Eisenbud, D. and Harris, J.
\newblock {3264 and All That: Intersection Theory in Algebraic Geometry }
\newblock {\em Cambridge University Press}, (2016)

\bibitem{Fulton1998intersection}
Fulton, W.
\newblock { Intersection Theory }.
\newblock {\em Springer Science+Business Media}, (1998).

\bibitem{Ho2}
Hong, J. 
\newblock Rigidity of singular Schubert varieties in Gr(m,n).
\newblock {\em Journal of Differential Geometry } 71 (2004): 1-22.

\bibitem{Ho1}
Hong, J. 
\newblock Rigidity of Smooth Schubert Varieties in Hermitian Symmetric Spaces. 
\newblock {\em Transactions of the American Mathematical Society } 359, no. 5 (2007): 2361–81.

\bibitem{HM1}
Hong, J. and Mok, N.
\newblock Characterization of smooth Schubert varieties in rational homogeneous manifolds of Picard number 1.
\newblock {\em Journal of Algebraic Geometry } 22 (2012): 333-362.

\bibitem{Hong2020SchurRO}
Hong, J. and Mok, N.
\newblock Schur rigidity of Schubert varieties in rational homogeneous manifolds of Picard number one. 
\newblock {\em Selecta Mathematica}, 26 (2020): 1-27.

\bibitem{KL1}
Kleiman, S.L. 
\newblock Geometry on Grassmannians and applications to splitting bundles and smoothing cycles.
\newblock {\em Mathématiques de l’Institut des Hautes Scientifiques } 36, 281–297 (1969).

\bibitem{RT}
Robles, C. and The, D.
\newblock Rigid Schubert varieties in compact Hermitian symmetric spaces.
\newblock {\em Selecta Mathematica } 18 (2011): 717-777.

\bibitem{Walter}
Walters, M. 
\newblock Geometry and uniqueness of some extreme subvarieties in complex Grassmannians,
\newblock Ph.D. thesis, University of Michigan, 1997. 


\end{thebibliography}
\end{document}